\newtheorem{thm}{Theorem}[section]
\newtheorem{lem}[thm]{Lemma}
\newtheorem{co}[thm]{Corollary}
\newtheorem{pr}[thm]{Proposition}
\newtheorem{re}{Remark}[section]
\numberwithin{equation}{section}
\begin{document}
\title{Large and moderate deviations for a $\mathbb{R}^d$-valued branching random walk with a random environment in time }
\author{Chunmao Huang$^{a}$, Xin Wang$^{a}$, Xiaoqiang Wang$^{b}$\footnote{Corresponding author at: Shandong University (Weihai), School of Mathematics and Statistics, 264209, Weihai, China.  \newline \indent \ \ Email addresses: xiaoqiang.wang@sdu.edu.cn (Xiaoqiang Wang).}
\\
\small{\emph{$^{a}$Harbin Institute of Technology (Weihai), Department of Mathematics, 264209, Weihai, China}}\\
\small{\emph{$^{b}$Shandong University (Weihai), School of Mathematics and Statistics, 264209, Weihai, China}}}
\date{}
\maketitle

\begin{abstract}
 We consider a $\mathbb{R}^d$-valued discrete time branching random walk with a stationary and ergodic environment in time.  Let $Z_n$ be the counting measure of particles of generation $n$. With the help of the uniform convergence of martingales and   multifractal analysis, we show a large deviation result associated to the measures $Z_n$ as well as the corresponding moderate deviations.

\bigskip
 \emph{AMS 2010 subject classifications.}  60J80, 60K37, 60F10.

\emph{Key words:} Branching random walk; random environment;  large deviation; moderate deviation;  uniform convergence; multifractals
\end{abstract}

\section{Introduction}\label{LTS1}

\noindent
In this paper, we consider a stochastic process called  \emph{branching random walk with a random environment in time} (BRWRE) which is a generalization of the classical branching random walks (see e.g. \cite{biggins77, biggins, 
ka,na2014}) by considering the influence of the time. In this model, particles produce offspring which scatter in the real space $\mathbb R^d$ and the reproduction is affected by a time-dependent random environment. The distributions of the point processes  formulated by the number of particle's offspring and their displacements vary from generation to generation  according to a time random environment
.   As an application, such model can be used to describe   the distribution of plants in the space  
along genealogies.
First proposed by Biggins and Kyprianou  \cite{biggins04}, this model and its transformation such as weighted branching processes in random environments (see e.g. \cite{ku}) and multiplicative cascades  in random environments (see e.g. \cite{liang}) attract  many authors' attention.
Compared with the other models of  branching random walks in random environments studied largely in the literature, see e.g. \cite{greven,BCG93,CP071, CP,CY,HN09, MN, yosh,OWO13}, this model does not consider the influence of space environments,  and moreover, it considers  particles walking in the real space $\mathbb R^d$ rather than  on the integer lattice $\mathbb Z^d$.
Recently, the  BRWRE obtained much  progress on research, see e.g. \cite{GLW14, GL16,huang2,  MP, huang17}.
However, these studies only involved the case where the space dimension $d=1$, namely, the walks were restricted to 
a straight line, hence we can not use the results in the literature  to deal with the higher dimensional case, such as the application example that plants are distributed on a two-dimensional plane.
Therefore, whether for theoretical or practical purposes, the study on the $\mathbb{R}^d$-valued BRWRE with $d>1$ is demanded. For branching random walks, one of the main research interests is to discover the  asymptotic properties of the counting measure of particles in the $n$-th generation. On this subject, Huang \emph{et al.} \cite{huang2} and Wang and Huang \cite{huang17} established a large deviation principle and a moderate deviation principle respectively associated to the $n$-th generation counting measure  for {the} BRWRE in $\mathbb R$ (with dimension $d=1$). When the space dimension increases, many methods and techniques for one-dimensional case are no longer applicable. Facing such challenge, this paper {aims}  to investigate the large and moderate deviations of  the $n$-th generation counting measure for  BRWRE in general  real space  $\mathbb R^d$ with  dimension $d\geq1$.



Let us describe the model of BRWRE in detail. The random environment in time, denoted by $\xi=(\xi_n)$,  is a stationary and ergodic sequence  of random variables, indexed by the time $n\in\mathbb{N}=\{0,1,2,\cdots\}$.
Each realization of $\xi_n$ corresponds to a distribution $\eta_n=\eta(\xi_n)$
 on $\mathbb{N}\times \mathbb{R}^d\times \mathbb{R}^d\times\cdots$, where $d\geq 1$ is  the dimension of the real space.
Given the environment $\xi=(\xi_n)$, the process can be described as follows:

$\bullet$ At time 0, one initial particle ${\emptyset} $ of generation 0 is located at $S_{\emptyset}=(0,...,0)\in \mathbb{R}^d$;

$\bullet$ At time 1, $\emptyset$ is replaced by  $N=N({\emptyset})$ particles of generation 1, located at $L_i=L_i(\emptyset)=(L^1_i,L^2_i,...,L^d_i)$, $1\leq i\leq N$, where the random vector $X(\emptyset)=(N,L_1,L_2,...)\in \mathbb{N}\times \mathbb{R}^d\times \mathbb{R}^d\times\cdots$ is of distribution $\eta_0=\eta(\xi_0)$.

$\bullet$ In general, each particle $u$ of generation $n$ located at $S_u=(S_u^1,\cdots,S_u^d)\in\mathbb R^d$ is replaced at time {n+1} by $N(u)$ new particles $ui$ of generation $n+1$, located at
$$S_{ui}=S_u+L_i(u)\qquad(1\leq i\leq N(u)),$$
where the  random vector $X(u)=(N(u),L_1(u),L_2(u),\cdots)$ is of distribution $\eta_n=\eta(\xi_n)$.  Note that the values $L_i(u)$ for $i> N(u)$ do not play any role for our model; we introduce them only for convenience. All particles behave independently conditioned on the environment $\xi$.

For each realization $\xi $ of the environment sequence,
let $(\Gamma, {\cal G},  \mathbb{P}_\xi)$ be the probability space under which the
process is defined. 
The probability
$\mathbb{P}_\xi$ is usually called  quenched law.
The total
probability $\mathbb{P}$ is usually called annealed law.
The quenched law $\mathbb{P}_\xi$ may be considered to be the conditional
probability of $\mathbb{P}$ given $\xi$. The expectation with respect to $\mathbb{P}$ (resp. $\mathbb P_\xi$) will be denoted by $\mathbb E$ (resp. $\mathbb E_\xi$).

\medskip
Let $\mathbb{U}=\{\emptyset\}\cup_{n\geq1}\mathbb{N^*}^n$ be the set of all finite sequences $u=u_1\cdots u_n$ and $\mathbb I={\mathbb N^*}^{\mathbb N^*}$ be the set of all infinite sequences, where $\mathbb N^*=\{1,2,\cdots\}$.  For $u\in\mathbb{U}$ or $\mathbb I$, we write $u$ for the length of $u$, and $u|n$ for the restriction to the first $n$ terms of $u$, with the convention that   $u|0=\emptyset$.
By definition, under $\mathbb{P}_\xi$, the random vectors $\{X(u)\}$, indexed  by $u\in\mathbb{U}$, are independent of each other, and each  $X(u)$ has distribution $\eta_n=\eta(\xi_n)$ if $|u|=n$.

Let $\mathbb{T}$ be the Galton-Watson tree  with defining element $\{N(u)\}$ 
and   $\mathbb{T}_n=\{u\in\mathbb{T}: |u|=n\}$ be the set of particles of generation $n$. 
For $n\in\mathbb N$, let
\begin{equation}
Z_n(\cdot)=\sum_{u\in{\mathbb{T}_n}}\delta_{S_u}(\cdot)
\end{equation}
be the counting measure of particles of generation $n$. 
In this paper, we are interested in large and moderate deviations associated to the sequence of measures $\{Z_n\}$.


For $n\in\mathbb{N}$ and $z=x+\mathbf iy\in\mathbb{C}^d$ (later and throughout the paper we use $x$ and $y$ to represent the real and imaginary parts of $z\in\mathbb{C}^d$ respectively, i.e. $x=\text{Re} z, y=\text{Im} z\in\mathbb R^d$,  while we use $t$ to represent real vector in $\mathbb R^d$), put
\begin{equation}
m_n(z)=\mathbb{E}_\xi\sum^{N(u)}_{i=1}e^{\langle z,{L_i(u)}\rangle}\quad(|u|=n),
\end{equation}
where  $\langle\cdot,\cdot\rangle$ is the notation of inner product that $\langle z_1,z_2\rangle=\sum\limits_{i=1}^dz_1^i\bar z_2^i$ if $z_1=(z_1^1,\cdots,z_1^d)$ and $z_2=(z_2^1,\cdots,z_2^d)\in\mathbb C^d$.
 Throughout the paper, we assume that
\begin{equation}\label{ASS}
 \qquad  N\geq1 \qquad\text{and} \qquad\mathbb P(N=1)<1, 
\end{equation}
which means that one particle produces at least one child in general and at least two children with positive probability.
We can see that the assumption (\ref{ASS})  leads to $\mathbb E\log m_0(0)>0$,
 which means that the corresponding  branching process in a random environment (BPRE), $\{Z_n(\mathbb R^d)\}$, is \emph{supercritical};  and  furthermore, the assumption (\ref{ASS}) in fact  ensures that $Z_{n}(\mathbb{R}^d)$ goes to infinity  almost surely (a.s.).
We refer to \cite{a,a1, tanny1, tanny2} for more information about BPRE.

\medskip
We first show a large deviation result  for the sequence of measures $\{Z_n(n\cdot)\}$.
Define the function
\begin{equation}
\Lambda(t)=\mathbb E\log m_0(t) \qquad(t\in\mathbb{R}^d)
\end{equation}
and set $\Omega_{\Lambda}=\{t\in\mathbb R^d: \text{$\Lambda(t)$ is well defined as real numbers and differentiable}\}$. For simplicity,
we  assume   $\Omega_{\Lambda}=\mathbb R^d$, so that
$\Lambda(t)$ is well defined  and differentiable everywhere on $\mathbb{R}^d$ (otherwise later one needs to consider the interior of $\Omega\bigcap\Omega_\Lambda$ instead of  $\Omega$  in Theorem \ref{Conver1.12}). Denote the Legendre transform of the function $\Lambda$ by
\begin{equation*}
\Lambda^*(\alpha)=\underset{t\in\mathbb{R}^d}\sup\left\{\langle t,\alpha\rangle-\Lambda(t)\right\}\qquad (\alpha\in\mathbb R^d).
\end{equation*}
It can be seen that $\Lambda^*(\alpha)=\langle t,\nabla\Lambda(t)\rangle-\Lambda(t)$ if $\alpha=\nabla\Lambda(t)$.
For $\varepsilon>0$, denote $$D(z,\varepsilon)=\{\zeta\in\mathbb{C}^d:|\zeta^j-z^j|<\varepsilon,\forall j=1,\cdots,d\},$$$$ B(t,\varepsilon )=\{s\in\mathbb{R}^d:\|s-t\|\leq\varepsilon\}$$
the neighbourhood of $z\in\mathbb C^d$ and that of $t\in\mathbb R^d$ respectively, where $\|a\|=\sum\limits_{i=1}^d|a^i|^2$ for $a=(a^1,\cdots,a^d)\in\mathbb C^d$
. Furthermore, set
$$\alpha_0(z,\varepsilon)=\underset{\zeta\in D(z,\varepsilon)}\inf|m_0(\zeta)|,$$
$$I=\{t\in\mathbb{R}^d:\langle t,\nabla\Lambda(t)\rangle-\Lambda(t)<0\},$$
$$\Omega_1=\text{int}\{t\in\mathbb{R}^d:\mathbb{E}\log^+\mathbb{E}_\xi\tilde{Z}_1(t)^p<\infty~for~some~p>1\},$$
\begin{equation*}
\Omega_2=\{t\in\mathbb{R}^d:\exists\delta_t>0\;\;\text{such that}\;\;
\mathbb{E}\log^-\alpha_0(t,\delta_t)<\infty\},
\end{equation*}
$$\text{ $\Omega=I\bigcap\Omega_1\bigcap\Omega_2$ }.$$

\begin{thm}[Large deviations]\label{Conver1.12}
 Let $ \mathcal{J}=\{\nabla\Lambda(t)\in\mathbb{R}^d:t\in\Omega\}$ and $ \tilde{\mathcal J}=\{\alpha\in\mathbb{R}^d:\Lambda^*(\alpha)<0\}$. Then
with probability 1, for all measurable  $A\subset\mathcal J$,
\begin{eqnarray}\label{LDMDt1e1}
-\inf_{\alpha\in \mbox{\emph{int}} A }\Lambda^*(\alpha)
\leq\liminf_{n\rightarrow\infty}\frac{1}{n}\log Z_n(nA)
\leq\limsup_{n\rightarrow\infty}\frac{1}{n}\log Z_n(nA)
\leq
-\inf_{\alpha\in  \bar A}\Lambda^*(\alpha),
\end{eqnarray}
where   $\bar A$ denotes the closure of $A$. If additionally  $\mathbb E[\mathbb P_\xi (\|L_1\|\leq a)^{-1}]<\infty$ for some constant $a>0$, then (\ref{LDMDt1e1}) holds for all  measurable  $A\subset\tilde{\mathcal J}$.
\end{thm}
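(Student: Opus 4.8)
The plan is to establish the two one-sided bounds of \eqref{LDMDt1e1} separately, each via the additive martingales associated with the walk, and then to make each almost-sure statement hold for all $A$ at once by quantifying over a countable set of parameters. The standing tools are: for $t\in\mathbb{R}^d$, the nonnegative $\mathbb{P}_\xi$-martingale
\[
W_n(t)=\frac{1}{P_n(t)}\sum_{u\in\mathbb{T}_n}e^{\langle t,S_u\rangle},\qquad P_n(t)=\prod_{k=0}^{n-1}m_k(t),
\]
which satisfies $\mathbb{E}_\xi W_n(t)=1$, so that $\limsup_n\frac1n\log W_n(t)\le0$ a.s.\ for each fixed $t$ (Markov and Borel--Cantelli); Birkhoff's ergodic theorem, giving $\frac1n\log P_n(t)\to\Lambda(t)$ a.s.\ (using $\Omega_\Lambda=\mathbb{R}^d$); and the uniform convergence of the additive martingales established earlier, which on a single event of probability one yields $W_n(t)\to W(t)\in(0,\infty)$ simultaneously for every $t\in\Omega$ --- this is precisely where the intersection $I\cap\Omega_1\cap\Omega_2$ enters ($I$ and $\Omega_1$ for non-degeneracy of the limit, $\Omega_2$ for the local uniformity). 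Since $\Lambda(0)=\mathbb{E}\log m_0(0)>0$ and $\Lambda$ is finite on all of $\mathbb{R}^d$, the set $\tilde{\mathcal J}$ (hence $\mathcal J\subset\tilde{\mathcal J}$) is bounded, so $\bar A$ is compact and $\Lambda^*$ is finite on it.

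\emph{Upper bound.} For $\alpha\in\bar A$ and $\varepsilon>0$, pick $t$ with $\langle t,\alpha\rangle-\Lambda(t)>\Lambda^*(\alpha)-\varepsilon$ (possible, $\Lambda^*(\alpha)$ being a finite supremum). For $S_u\in nB(\alpha,\delta)$ the Cauchy--Schwarz inequality gives $\langle t,S_u\rangle\le n\langle t,\alpha\rangle+n\sqrt{\|t\|\,\delta}$, whence
\[
Z_n\big(nB(\alpha,\delta)\big)\le e^{-n\langle t,\alpha\rangle+n\sqrt{\|t\|\,\delta}}\sum_{u\in\mathbb{T}_n}e^{\langle t,S_u\rangle}=e^{-n\langle t,\alpha\rangle+n\sqrt{\|t\|\,\delta}}\,P_n(t)W_n(t),
\]
so $\limsup_n\frac1n\log Z_n(nB(\alpha,\delta))\le-\Lambda^*(\alpha)+\varepsilon+\sqrt{\|t\|\,\delta}$. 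Covering the compact set $\bar A$ by finitely many such balls (with $\sqrt{\|t_i\|\,\delta_i}<\varepsilon$) and using $Z_n(nA)\le\sum_i Z_n(nB(\alpha_i,\delta_i))$ gives $\limsup_n\frac1n\log Z_n(nA)\le-\inf_{\bar A}\Lambda^*+2\varepsilon$; let $\varepsilon\downarrow0$. Taking the balls and tilts from a fixed countable dense family makes the exceptional null set independent of $A$. This step uses nothing beyond $\Omega_\Lambda=\mathbb{R}^d$, so it applies to every measurable $A$ in both assertions.

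\emph{Lower bound on $\mathcal J$.} Fix $t\in\Omega$, set $\alpha=\nabla\Lambda(t)$, so that $\Lambda^*(\alpha)=\langle t,\alpha\rangle-\Lambda(t)<0$, and choose $\delta>0$ with $B(\alpha,2\delta)\subset A$. The key step is that $\sum_{u\in\mathbb{T}_n}e^{\langle t,S_u\rangle}=P_n(t)W_n(t)$ is asymptotically carried by the particles with $S_u/n\in B(\alpha,\delta)$. For a coordinate $j$, its unit vector $e_j$, and small $s>0$, writing $e^{\langle t,S_u\rangle}=e^{\langle t+se_j,S_u\rangle}e^{-sS_u^j}$ on $\{S_u^j>n(\alpha^j+\delta')\}$ gives
\[
\sum_{u:\,S_u^j>n(\alpha^j+\delta')}e^{\langle t,S_u\rangle}\le e^{-sn(\alpha^j+\delta')}\,P_n(t+se_j)\,W_n(t+se_j),
\]
whose $\limsup_n\frac1n\log$ is at most $-s(\alpha^j+\delta')+\Lambda(t+se_j)=\Lambda(t)+\big(\Lambda(t+se_j)-\Lambda(t)-s\,\partial_j\Lambda(t)\big)-s\delta'$; by differentiability of $\Lambda$ the bracket is $o(s)$, so this rate is $\le\Lambda(t)-\tfrac12 s\delta'<\Lambda(t)$ for $s$ small. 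The half-space $\{S_u^j<n(\alpha^j-\delta')\}$ is handled with the tilt $t-se_j$, and summing the $2d$ bounds (with $\delta'$ chosen so that $S_u/n\notin B(\alpha,\delta)$ forces $|S_u^j/n-\alpha^j|>\delta'$ for some $j$) gives $\limsup_n\frac1n\log\sum_{u:\,S_u/n\notin B(\alpha,\delta)}e^{\langle t,S_u\rangle}<\Lambda(t)$. Since $\frac1n\log P_n(t)W_n(t)\to\Lambda(t)$ (here $W(t)>0$ is used), the complementary sum satisfies $\frac1n\log\sum_{u:\,S_u/n\in B(\alpha,\delta)}e^{\langle t,S_u\rangle}\to\Lambda(t)$; bounding this sum above by $Z_n(nB(\alpha,\delta))\,e^{n\langle t,\alpha\rangle+n\sqrt{\|t\|\,\delta}}$ then gives
\[
\tfrac1n\log Z_n\big(nB(\alpha,\delta)\big)\ \ge\ \Lambda(t)-\langle t,\alpha\rangle-\sqrt{\|t\|\,\delta}+o(1)\ =\ -\Lambda^*(\alpha)-\sqrt{\|t\|\,\delta}+o(1).
\]
Hence $\liminf_n\frac1n\log Z_n(nA)\ge-\Lambda^*(\alpha)-\sqrt{\|t\|\,\delta}$; let $\delta\downarrow0$. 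Running this over a countable dense set of $t\in\Omega$ and invoking the continuity of $\Lambda$, $\nabla\Lambda$, and of $\alpha=\nabla\Lambda(t)\mapsto\Lambda^*(\alpha)=\langle t,\nabla\Lambda(t)\rangle-\Lambda(t)$ extends the bound to every $\alpha\in\mathrm{int}\,A\ (\subset\mathcal J)$, which together with the upper bound is \eqref{LDMDt1e1} for all measurable $A\subset\mathcal J$.

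\emph{Extension to $\tilde{\mathcal J}$, and the main difficulty.} The upper bound already applies to every $A\subset\tilde{\mathcal J}$, so only the lower bound must be pushed from $\mathcal J$ to $\tilde{\mathcal J}$. The extra hypothesis $\mathbb{E}[\mathbb{P}_\xi(\|L_1\|\le a)^{-1}]<\infty$ yields a quantitative lower bound for $|m_0(\zeta)|$ on small complex balls about real points, which forces $\Omega_2=\mathbb{R}^d$, and --- more essentially --- it guarantees that, conditionally on $\xi$, a definite fraction of each particle's children lie within distance $a$ of it, so that after a bounded number of generations a descendant remains within $O(1)$ of its ancestor with overwhelming probability. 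One then interpolates: given $\alpha\in\tilde{\mathcal J}$, pick $\alpha'\in\mathcal J$ with $\Lambda^*(\alpha')$ close to $\Lambda^*(\alpha)$, produce $e^{-m\Lambda^*(\alpha')+o(m)}$ particles near $m\alpha'$ by the bound just proved, and propagate them for the remaining $n-m$ generations (letting $m/n\to0$ and using the hypothesis to absorb the required displacement) so that a non-negligible number of generation-$n$ descendants fall in $nB(\alpha,\delta)$, giving $\liminf_n\frac1n\log Z_n(nA)\ge-\Lambda^*(\alpha)$ for $\alpha\in\mathrm{int}\,A\ (\subset\tilde{\mathcal J})$ too. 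The principal obstacle throughout is the concentration estimate in the lower bound --- showing the tilted total mass $\sum_u e^{\langle t,S_u\rangle}$ does not escape the macroscopic site $n\nabla\Lambda(t)$, and making this hold simultaneously for all $t\in\Omega$ on one null set --- which is exactly where the uniform convergence of the martingale family and the multifractal bookkeeping are needed; closing the remaining gap $\tilde{\mathcal J}\setminus\mathcal J$ is the most delicate point of the second assertion.
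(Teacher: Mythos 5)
Your upper bound and your lower bound over $\mathcal{J}$ are correct, and they form a genuinely different argument from the paper's. The paper proves the upper bound by introducing the pressure-like function $\tilde\Lambda(t)=\limsup_n\frac1n\log\tilde Z_n(t)$, showing $\tilde\Lambda\le\Lambda$ a.s.\ (Proposition 4.2 via a series/martingale estimate) and invoking the general LDP upper bound of Dembo--Zeitouni; your direct Chernoff bound with a finite cover of $\bar A$ is more elementary and equally valid. For the lower bound on $\mathcal J$, the paper goes through multifractal analysis: it constructs the Mandelbrot measure $\mu_t$, proves $\dim E(\alpha)=-\Lambda^*(\alpha)$ for $\alpha\in\mathcal J$ (Theorem 3.2, using Propositions 4.5--4.8), and then converts this dimension statement into a lower bound on $Z_n(nA)$ via a covering of $E(\alpha)$ by cylinders. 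Your exponential-tilting/concentration argument --- bounding the tail mass $\sum_{u:\,S_u/n\notin B(\alpha,\delta)}e^{\langle t,S_u\rangle}$ by tilting to $t\pm se_j$, subtracting from $P_n(t)W_n(t)$, and dividing by the maximal weight per particle --- bypasses the Mandelbrot measure and Hausdorff dimension entirely. What it buys is brevity if one wants only the LDP; what the paper's route buys is the multifractal formalism itself (a result of independent interest, see Theorem 3.2). Both routes hinge on exactly the same technical input, the uniform martingale convergence of Theorem 2.1 on $\Omega=I\cap\Omega_1\cap\Omega_2$, which supplies $W(t)>0$ simultaneously for all $t\in\Omega$ and allows the countable-density argument.

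However, the extension to $\tilde{\mathcal J}$ is not merely ``delicate'': as written, your interpolation scheme does not work. Producing $e^{-m\Lambda^*(\alpha')+o(m)}$ particles near $m\alpha'$ with $m/n\to0$ yields only $e^{o(n)}$ particles located at $o(n)$ --- a negligible seed --- after which you still have $\approx n$ generations to run, so you have not reduced the problem at all; and if instead you take $n-m=o(n)$, you would need a per-step displacement of order $n(\alpha-\alpha')/(n-m)\to\infty$ to move from $n\alpha'$ to $n\alpha$, which the hypothesis $\mathbb E[\mathbb P_\xi(\|L_1\|\le a)^{-1}]<\infty$ cannot supply (it bounds displacements, it does not amplify them). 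More fundamentally, the issue is that for $\alpha\in\tilde{\mathcal J}\setminus\mathcal J$ the corresponding $t$ with $\nabla\Lambda(t)=\alpha$ may fail $t\in\Omega_1\cap\Omega_2$, so your tilting argument has no $W(t)>0$ to hang on. The paper's remedy is a truncation of the offspring distribution at level $a$ (Section 5): for the truncated process one shows $\Omega_1^a=\Omega_2^a=\mathbb R^d$ unconditionally under the extra hypothesis (Lemma 5.3), so that $\Omega_a=I_a$ and hence $\mathcal J_a=\tilde{\mathcal J}_a$; then one uses $E_a(\alpha)\subset E(\alpha)$ together with $\Lambda_a^*(\alpha)\downarrow\Lambda^*(\alpha)$ (Lemma 5.2, requiring $\alpha\in\mathrm{int}\{\Lambda^*<\infty\}$) and lets $a\to\infty$. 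Your direct route could in principle be adapted by running the tilting argument for the truncated walk and then exhausting $a\uparrow\infty$, but that is a different argument from the one you sketched, and it still needs the precise monotonicity and Legendre-transform lemmas that form the bulk of Section 5.
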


Theorem \ref{Conver1.12} describes a large deviation property about the the sequence of measures $\{Z_n(n\cdot)\}$, which generalizes the result of Huang  \emph{et al} \cite{huang2} for  one-dimensional case ($d=1$). In \cite{huang2}, the authors established a large deviation principle by calculating the limit of the free energy, with the help of the positions of the extreme points of generation $n$. The number of such extreme points are just two (the leftmost one and the rightmost one) for the case $d=1$, but it is infinite for  the case $d>1$. Therefore, their method is no longer suitable for high dimensional case.
Inspired by Attia  \cite{na}
, we  prove  Theorem \ref{Conver1.12} by calculating directly the upper and lower bounds, via the approach of multifractal analysis. Although this basic method is traditional, the flash point of our proof  is reflected in the treatment of the environment. Due to the existence of the environment, especially the stationary and ergodic (rather than  i.i.d) environment, many details during the proof are intractable.

\begin{re}\label{Conver1.12c1}It can be seen that $\mathcal J\subset\tilde{\mathcal J}$.  If $\mathbb E[\mathbb P_\xi (\|L_1\|\leq a)^{-1}]<\infty$ for some constant $a>0$, it follows  from   Theorem \ref{Conver1.12} that with probability 1, for all $\alpha\in\mbox{\emph{int}}\tilde{\mathcal{J}}$,
\begin{equation}\label{LDMDS1Cr1}
\lim_{\varepsilon\rightarrow0^+}\lim_{n\rightarrow\infty}\frac{1}{n}\log Z_n(nB(\alpha,\varepsilon))=\Lambda^*(\alpha).
\end{equation}
\end{re}

For the deterministic environment case, the formula (\ref{LDMDS1Cr1}) was shown in \cite{na,na2014}, and it also can be deduced from \cite{b92}.


\medskip

Now we show moderate deviations associated to $\{Z_n\}$. Write $\pi_{0}=m_{0}(0)$. For one-dimensional case, under the  assumption
\begin{equation}\label{LDMDAS}
\mathbb{E}_\xi\underset{u\in \mathbb  T_1}{\Sigma}S_u=0\;\; a.s.\quad\text{and}\quad\text{$\mbox{ess}\sup\frac{1}{\pi_0}\mathbb{E}_\xi\underset{u\in \mathbb T_1}{\Sigma}e^{\delta\|S_u\|}<\infty$  for some $\delta>0$,}
\end{equation}
Wang and Huang \cite{huang17} established a moderate deviation principle for the sequence of measures $\frac{Z_n(a_n\cdot)}{Z_n(\mathbb{R}^d)}$,
where $(a_n)$ is a sequence of positive numbers satisfying $\frac{a_n}{n}\rightarrow0$ and $\frac{a_n}{\sqrt{n}}\rightarrow\infty$. Here we want to weaken the strong assumption (\ref{LDMDAS}), but the cost is that the environment $\xi$ is required to  satisfy the classical mixing conditions of Ibragimov ($\phi$-mixing) or of Rosenblatt (strong mixing) and moreover, the claim to $a_n$ will also be strengthened.  Let $\phi: \mathbb N\rightarrow[0,1]$ be nonincreasing and
 the sequence $\xi=(\xi_n)$ is called to be $\phi$-mixing (see \cite{ibr62}) if for each $k$ and $n$,
\begin{equation}\label{LDMDAm1}
 |\mathbb P(E_1\cap E_2)-\mathbb P(E_1)\mathbb P(E_2)|\leq\phi(n)\mathbb P(E_1)
 \end{equation}
 for all $E_1\in\sigma(\xi_1,\cdots,\xi_k)$ and $E_2\in\sigma(\xi_{k+n},\xi_{k+n+1},\cdots )$. Let $\rho:\mathbb N\rightarrow[0,\frac{1}{4}]$ and the sequence $\xi=(\xi_n)$ is called to be strong mixing (see \cite{robs56}) with mixing coefficients $\rho(n)$ if for each $k$ and $n$,
\begin{equation}\label{LDMDAm2}
  |\mathbb P(E_1\cap E_2)-\mathbb P(E_1)\mathbb P(E_2)|\leq\rho(n)
   \end{equation}
for all $E_1\in\sigma(\xi_1,\cdots,\xi_k)$ and $E_2\in\sigma(\xi_{k+n},\xi_{k+n+1}, \cdots )$.

\begin{thm}[Moderate deviations]\label{Mdp1.4}
Let $(a_n)$ be a sequence of positive numbers satisfying
\begin{equation}\label{LDMDan}
0<\liminf_{n\rightarrow\infty}\frac{a_n}{n^\alpha}\qquad\text{and}\qquad\limsup_{n\rightarrow\infty}\frac{a_n}{n^\beta}<\infty
\end{equation}
for some $\alpha,\beta\in(\frac{1}{2},1)$.
Assume that the environment $\xi=(\xi_n)$ satisfies (\ref{LDMDAm1}) with $\sum_n\phi(n)^{1/\theta}<\infty$ for some $\theta>1$ or  (\ref{LDMDAm2}) with
$\sum_n\rho(n)^{1/\theta}<\infty$ for some $\theta>2$, and the following assumption holds:
\begin{equation}\label{LDMDAS2}
\mathbb{E} \frac{1}{\pi_0}\underset{u\in \mathbb  T_1}{\Sigma}S_u=0 \quad\text{and}\quad\text{$ \mathbb E\frac{1}{\pi_0} \underset{u\in \mathbb T_1}{\Sigma}e^{\delta\|S_u\|}<\infty$  for some $\delta>0$.}
\end{equation}
If $0\in \Omega_1\bigcap\Omega_2$, then with probability $1$,
for all measurable $A\subset\mathbb{R}^d$,
\begin{eqnarray*}
-\underset{x\in \mbox{\emph{int}}A}{\inf}\Gamma^*(x)
\leq\underset{n\rightarrow\infty}{\liminf}\frac{n}{a^2_n}\log\frac{Z_n(a_nA)}{Z_n(\mathbb{R}^d)}
\leq\underset{n\rightarrow\infty}{\limsup}\frac{n}{a^2_n}\log\frac{Z_n(a_nA)}{Z_n(\mathbb{R}^d)}
\leq-\underset{x\in \bar{A}}{\inf}\Gamma^*(x),
\end{eqnarray*}
where  $\bar A$ denotes the closure of $A$,  and the rate function $\Gamma^*(x)=\underset{t\in\mathbb{R}^d}\sup\left\{\langle t,x\rangle-\Gamma(t)\right\}$ is the Legendre transform of
$\Gamma(t)=\frac{1}{2}\langle t, \mathbf{C}t\rangle$ ($t\in\mathbb R^d$),
where $\mathbf{C}=(c_{ij})$ is the matrix with elements $c_{ij}=\mathbb{E}\left[\frac{1}{\pi_0}\underset{u\in \mathbb T_1}{\Sigma}(S^i_u-\frac{1}{\pi_0}\mathbb E_\xi \underset{u\in \mathbb T_1}{\Sigma}S^i_u)(S^j_u-\frac{1}{\pi_0}\mathbb E_\xi \underset{u\in \mathbb T_1}{\Sigma}S^j_u)\right]
$.
\end{thm}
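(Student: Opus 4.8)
The plan is to deduce Theorem~\ref{Mdp1.4} from a pathwise G\"artner--Ellis argument whose only analytic input is the almost sure convergence of a rescaled normalized free energy. Set $\lambda_n=a_n/n$ (so $\lambda_n\to0$ while $\lambda_n\sqrt n\to\infty$) and, for $t\in\mathbb R^d$,
\[
\Phi_n(t)=\frac{n}{a_n^2}\log\Big(\frac{1}{Z_n(\mathbb R^d)}\sum_{u\in\mathbb T_n}e^{\langle\lambda_n t,\,S_u\rangle}\Big);
\]
note that $\Phi_n(t)$ is exactly the logarithmic moment generating function of the probability measure $\mu_n:=Z_n(a_n\,\cdot\,)/Z_n(\mathbb R^d)$ evaluated at $\tfrac{a_n^2}{n}t$, divided by the speed $\tfrac{a_n^2}{n}$, and that each $\Phi_n$ is convex in $t$. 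Hence it suffices to prove that, with probability $1$, $\Phi_n(t)\to\Gamma(t)=\tfrac12\langle t,\mathbf C t\rangle$ for all $t$ in a fixed countable dense set: convexity of the $\Phi_n$ and finiteness (hence continuity) of $\Gamma$ then upgrade this to locally uniform, hence pointwise, convergence on $\mathbb R^d$. The starting point for the free-energy analysis is the additive martingale $W_n(z)=\big(\prod_{k=0}^{n-1}m_k(z)\big)^{-1}\sum_{u\in\mathbb T_n}e^{\langle z,S_u\rangle}$, which gives the identity
\[
\frac{1}{Z_n(\mathbb R^d)}\sum_{u\in\mathbb T_n}e^{\langle z,S_u\rangle}=\frac{W_n(z)}{W_n(0)}\exp\Big(\sum_{k=0}^{n-1}\log\frac{m_k(z)}{\pi_k}\Big),\qquad\pi_k=m_k(0).
\]
Since $0\in\Omega_1\cap\Omega_2$, the uniform convergence of the martingales near the origin (established earlier in the paper) applies: a.s.\ $W_n\to W$ uniformly on a neighbourhood of $0$, with $W$ continuous and $W(0)>0$; thus for $z=\lambda_n t$ the ratio $W_n(\lambda_n t)/W_n(0)$ is bounded and tends to $1$, and once multiplied by $n/a_n^2\to0$ contributes nothing to the limit of $\Phi_n(t)$.

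It remains to analyze $\frac{n}{a_n^2}\sum_{k=0}^{n-1}\log\frac{m_k(\lambda_n t)}{\pi_k}$. Writing $\widehat{\mathbb E}_\xi f=\pi_0^{-1}\mathbb E_\xi\sum_{u\in\mathbb T_1}f(S_u)$, which is a quenched probability expectation because $\mathbb E_\xi N=\pi_0$, one has by stationarity that $\log\frac{m_k(z)}{\pi_k}$ has the law of $\log\widehat{\mathbb E}_{\xi}e^{\langle z,S\rangle}$, the quenched cumulant generating function of a single displacement. Taylor-expanding at $z=0$: the linear coefficient is the quenched mean $V_k:=\widehat{\mathbb E}_{\xi_k}S$, the quadratic part is $\tfrac12\langle z,\Sigma_k z\rangle$ with $\Sigma_k$ the quenched covariance of $S$, and, thanks to the exponential moment in (\ref{LDMDAS2}), the third-order remainder is valid on a \emph{deterministic} ball and bounded by $C(\xi_k)|z|^3$ with $\mathbb E C(\xi_0)<\infty$. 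Substituting $z=\lambda_n t$ and multiplying by $n/a_n^2$, the sum splits into three pieces. (iii) A remainder $\le\frac{a_n}{n}\,\|t\|^3\cdot\frac1n\sum_{k<n}C(\xi_k)\to0$, by Birkhoff's ergodic theorem and $a_n/n\to0$. (ii) A quadratic piece $\frac1n\sum_{k<n}\tfrac12\langle t,\Sigma_k t\rangle\to\tfrac12\langle t,\mathbb E\Sigma_0\,t\rangle=\Gamma(t)$ a.s.\ by Birkhoff, where $\mathbb E\Sigma_0=\mathbf C$ follows from expanding the square in the definition of $c_{ij}$ and using $\mathbb E_\xi[\pi_0^{-1}\sum_u S_u]=V_0$ and $\mathbb E_\xi N=\pi_0$. (i) A linear piece $a_n^{-1}\big\langle\sum_{k<n}V_k,\,t\big\rangle$. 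Term (i) is the only one that needs more than stationary ergodicity, and it is where the weakening of (\ref{LDMDAS}) is paid for: under (\ref{LDMDAS}) one had $V_k\equiv0$, whereas now $(V_k)=(g(\xi_k))$ is merely a stationary sequence, mean zero by (\ref{LDMDAS2}), with all moments finite (again by (\ref{LDMDAS2}), via Jensen applied to the quenched probability $\widehat{\mathbb E}_\xi$); what is needed is $\sum_{k<n}V_k=o(a_n)$ a.s., and since $a_n\ge c\,n^\alpha$ with $\alpha\in(\tfrac12,1)$ it is enough that $\sum_{k<n}V_k=o(n^\alpha)$ a.s. As $(V_k)$ inherits the mixing coefficients of $(\xi_k)$, the hypotheses $\sum_n\phi(n)^{1/\theta}<\infty$ ($\theta>1$) or $\sum_n\rho(n)^{1/\theta}<\infty$ ($\theta>2$), together with the finite moments, place us within a Marcinkiewicz--Zygmund-type strong law (or a law of the iterated logarithm) for mixing sequences, which yields $n^{-1/p}\sum_{k<n}V_k\to0$ a.s.\ with $p=1/\alpha\in(1,2)$, i.e.\ precisely $\sum_{k<n}V_k=o(n^\alpha)$. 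Hence $\Phi_n(t)\to\Gamma(t)$ a.s.

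Fix now, once and for all, the almost sure event carrying the finitely many ergodic-average convergences used above and the convergence $\Phi_n\to\Gamma$ on the chosen dense set, and argue deterministically on it. The upper bound is the standard Chebyshev/exponential-tilting estimate: for closed $F\subset\mathbb R^d$, applying Markov's inequality to $\mu_n$ with the tilt $\tfrac{a_n^2}{n}t$ and inserting $\Phi_n\to\Gamma$ gives $\limsup_n\frac{n}{a_n^2}\log\mu_n(F)\le-\inf_{x\in F}\Gamma^*(x)$; since $\mu_n(A)\le\mu_n(\bar A)$ and $\Gamma$ is finite and differentiable (so $\Gamma^*$ is a good rate function and no steepness issue arises), the stated upper bound with $\bar A$ follows for every measurable $A$. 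For the lower bound, take an open $G$ and $x\in G$: if $x\notin\mathrm{Range}(\mathbf C)$ then $\Gamma^*(x)=+\infty$ and there is nothing to prove; otherwise $x=\mathbf C t_x$ for some $t_x$, and tilting the particles by $e^{\langle\lambda_n t_x,S_u\rangle}$ one checks, by computing the first two moments of the resulting tilted empirical measure from the same expansion as above (again using (\ref{LDMDAS2})), that it concentrates near $x$ at scale $a_n$; the change-of-measure inequality then gives $\liminf_n\frac{n}{a_n^2}\log\mu_n(B(x,\delta))\ge-\langle t_x,x\rangle+\Gamma(t_x)-o_\delta(1)=-\Gamma^*(x)-o_\delta(1)$, and letting $\delta\downarrow0$ and then optimizing $x$ over $G$ yields the lower bound with $\mathrm{int}\,A$, again for every measurable $A$.

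I expect the crux to be the free-energy convergence of the first step, and inside it the control of the linear term (i): pinning the partial sums of $(V_k)$ down to $o(n^\alpha)$ almost surely is exactly what forces the precise mixing exponents $\theta>1$ (for $\phi$-mixing) and $\theta>2$ (for strong mixing), and matching the available exponential moments against the hypotheses of the corresponding strong-law or iterated-logarithm results for dependent sequences is the delicate point. The remaining two terms need only Birkhoff's theorem, and the G\"artner--Ellis part is a careful but routine transcription to the pathwise, ``for all $A$ simultaneously'' setting, in the same spirit as the direct upper/lower-bound proof of Theorem~\ref{Conver1.12}.
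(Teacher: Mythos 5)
Your proposal follows essentially the same route as the paper's proof: decompose the normalized log-moment-generating function of $Z_n(a_n\cdot)/Z_n(\mathbb R^d)$ into a martingale ratio $W_n(\lambda_n t)/W_n(0)$ (killed by the uniform convergence of Theorem~\ref{Conver1.1} near $0$ together with $n/a_n^2\to0$), a linear drift $a_n^{-1}\sum_{k<n}\ell_k$ (killed by a strong law for stationary mixing sequences — the paper uses Hipp's 1979 Theorem~2, you invoke a Marcinkiewicz--Zygmund/LIL-type result, which is the same mechanism), a quadratic piece converging to $\Gamma(t)$ by Birkhoff, and a higher-order remainder that vanishes because $a_n/n\to0$; then conclude by G\"artner--Ellis after upgrading a.s.\ convergence on a countable dense set to all $t$ via convexity. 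The only place you diverge technically is inside Lemma~\ref{LDMDMDL1}: you propose Taylor-expanding $\log(m_k(z)/\pi_k)$ directly with a random $O(|z|^3)$ constant $C(\xi_k)$ satisfying $\mathbb E C(\xi_0)<\infty$, whereas the paper expands $\Delta_{n,i}=m_i(\lambda_n t)\pi_i^{-1}e^{-\langle\lambda_n t,\ell_i\rangle}-1$ term-by-term with a $\|x\|\lessgtr\frac{4}{\epsilon}\log n$ truncation, thereby never having to control a denominator in the $\log$ expansion. Your version can be made rigorous — the denominator $(\widehat{\mathbb E}_\xi e^{\langle z',S\rangle})^{-1}\le e^{|z'|\widehat{\mathbb E}_\xi\|S\|}\le\widehat{\mathbb E}_\xi e^{|z'|\|S\|}$ together with H\"older under $\widehat{\mathbb E}_\xi$ keeps everything inside the budget set by $\mathbb E\,\widehat{\mathbb E}_\xi e^{\delta\|S\|}<\infty$ provided $|z|$ is small enough — but this step is somewhat more delicate than your sketch acknowledges and is precisely what the paper's $\log n$ splitting is designed to sidestep. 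Beyond this, your explicit Chebyshev/change-of-measure upper and lower bounds are redundant since $\Gamma$ is finite and differentiable everywhere and G\"artner--Ellis applies directly, as the paper notes.
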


\begin{re} If the matrix $\mathbf{C}$ is invertible,    denoting its inverse by $\mathbf{C}^{-1}$, then we have $\Gamma^*(x)=\frac{1}{2}\langle x,\mathbf{C}^{-1}x\rangle$.
\end{re}

Theorem \ref{Mdp1.4} generalizes and improves the result of Wang and Huang \cite{huang17} for the case $d=1$. Clearly,  the assumption (\ref{LDMDAS2}) is   weaker than (\ref{LDMDAS}), but the restrictions on $(a_n)$ in Theorem \ref{Mdp1.4} make it impossible to
deal with the case where  $\frac{a_n}{n}\rightarrow0$ and $\frac{a_n}{\sqrt{n}}\rightarrow\infty$ but  (\ref{LDMDan}) is not valid, for example $a_n=\frac{n}{\log n}$ or  $a_n=\sqrt{n}\log n$.

\medskip
The rest part of the paper is organised as follows. Firstly, in Section  \ref{LDMDS2}, we introduce the  natural martingale in BRWRE and study its uniform convergence so as to make preparations for later proofs. Then from Sections \ref{LDMDS3} to \ref{LDMDSS3}, we work on  multifractal analysis and  large deviations: in Section  \ref{LDMDS3}, we state the results of  multifractal analysis (see Theorem \ref{Conver1.12ma} and Corollary \ref{Conver1.12cma}) and use them to prove Theorem  \ref{Conver1.12}; Section \ref{LDMDS31} and  Section \ref{LDMDSS3} are devoted to the proofs of Theorem \ref{Conver1.12ma} and Corollary \ref{Conver1.12cma} respectively. Finally in Section \ref{LDMDS4}, we consider     moderate deviations and  give the proof of Theorem \ref{Mdp1.4}.

\section{Uniform convergence of martingale}\label{LDMDS2}

We start with the introduction of the natural martingale in BRWRE.
For $n\in\mathbb{N}$ and $z=x+\mathbf iy\in\mathbb{C}^d$, where $x,y\in\mathbb{R}^d$, we denote the Laplace transform of $Z_n$ by
\begin{equation}\tilde{Z}_n(z)=\int e^{\langle z,\;\omega\rangle}Z_n(d\omega)=\underset{u\in\mathbb{T}_n}\sum e^{\langle z,S_u\rangle}.\end{equation}
Put
\begin{equation}
P_0(z)=1, \qquad P_n(z)=\mathbb{E}_\xi \tilde{Z}_n(z)=\prod^{n-1}_{i=0}m_i(z),
\end{equation}
\begin{equation*}
\tilde{X}_u(z)=\frac{e^{\langle z,S_u\rangle}}{P_{|u|}(z)}
\end{equation*}
and
\begin{equation}
W_n(z)=\frac{\tilde{Z}_n(z)}{P_n(z)}=\sum_{u\in\mathbb{T}_n}\tilde{X}_u(z).
\end{equation}
Let
$$\mathcal{F}_0=\sigma(\xi),\quad
\mathcal{F}_n = \sigma ( \xi, (X(u): |u| <n)) \;\;\text{for $n\geq 1$}$$
 be the $\sigma$-field containing all the
information concerning the first $n$ generations.
It is not difficult to verify that for each $z$ fixed, $W_n(z)$ forms a complex martingale with respect to the filtration $\mathcal{F}_n$ under both laws $\mathbb{P}_\xi$ and $\mathbb{P}$. The convergence of $W_n(z)$ is always useful for  studying the asymptotic properties of $\tilde{Z}_n$ and $Z_n$. In the deterministic environment case, this martingale has been studied by Kahane and  Peyri\`ere \cite{kahane}, Biggins \cite{biggins77,b92}, Durrett and Liggett \cite{durrett}, Guivarc'h \cite{guivarch}, Lyons \cite{lyons} and Liu \cite{liu97, liu00,liu01}, etc. in different contexts.
In particular, for $t\in\mathbb R^d$,  $W_n(t)$ is nonnegative and hence  converges a.s. to a limit random variable $W(t)$ with $\mathbb E_\xi W(t)\leq 1$. In order to study large and moderate deviations associated to $Z_n$, we need the uniform convergence of $W_n(z)$, especially for $z$ in a neighbourhood of $t\in\mathbb R^d$. About this topic, Biggins \cite{b92} have found the uniform  convergence region of  the complex martingale $W_n(z)$ for   branching random walks in deterministic environments, and Wang and Huang \cite{huang17} showed   similar results  for the non-negative martingale  $W_n(t)$  for BRWRE in $\mathbb R$. The following is our conclusion.

\begin{thm}
\label{Conver1.1}
Let $K$ be a compact subset of $\Omega$ and $K(\delta)=\{z\in\mathbb C^d:|z^j-t^j|<\delta, \forall j=1,\cdots,d,\;\; t\in K\}$ ($\delta>0$) be the neighbourhood of $K$ in space $\mathbb C^d$. Then there exist constants $\delta>0$ and $p_K>1$ such that the complex martingale $W_n(z)$ converges uniformly to a limit random variable $W(z)$ almost surely (a.s.) and in $\mathbb P_\xi$-$L^p$ on $K(\delta)$ for $p\in(1,p_K]$.
\end{thm}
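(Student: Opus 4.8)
\textbf{Proof proposal for Theorem \ref{Conver1.1}.}

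The plan is to establish uniform convergence via a classical chain of estimates: first show that $\sup_n\sup_{z\in K(\delta)}\mathbb E_\xi|W_n(z)|^p<\infty$ for a suitable $p>1$ and small $\delta>0$, and then upgrade the resulting $L^p$-boundedness to uniform a.s.\ and $L^p$ convergence on the (finite-dimensional) compact set $K(\delta)$ by an equicontinuity/Cauchy argument. Since $K\subset\Omega=I\cap\Omega_1\cap\Omega_2$ is compact, the definitions of $\Omega_1$ and $\Omega_2$ furnish, by a covering argument, a uniform $p_K>1$ with $\mathbb E\log^+\mathbb E_\xi\tilde Z_1(t)^{p_K}<\infty$ for all $t\in K$, and a uniform $\delta_0>0$ with $\mathbb E\log^-\alpha_0(t,\delta_0)<\infty$ for all $t\in K$; I will take $\delta\le\delta_0$ so that $|m_i(z)|$ stays comparably bounded below on $K(\delta)$, which is what makes the normalization $P_n(z)$ well behaved.

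First I would write the recursion $W_{n+1}(z)=\sum_{i=1}^{N}\frac{e^{\langle z,L_i\rangle}}{m_0(z)}W_n^{(i)}(z)$, where the $W_n^{(i)}$ are i.i.d.\ copies (under $\mathbb P_\xi$, with the shifted environment) independent of the first generation, and estimate $\mathbb E_\xi|W_{n+1}(z)|^p$ using the Topchii--Vatutin / von Bahr--Esseen inequality for sums of independent centered random variables: for $1<p\le 2$,
\begin{equation*}
\mathbb E_\xi\Big|\sum_i a_i(W_n^{(i)}-1)\Big|^p\le C_p\sum_i|a_i|^p\,\mathbb E_\xi|W_n^{(i)}-1|^p,
\end{equation*}
with $a_i=e^{\langle z,L_i\rangle}/m_0(z)$. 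This yields, with $A_n(z):=\mathbb E_\xi|W_n(z)-1|^p$ (or a slight variant controlling $\mathbb E_\xi|W_n(z)|^p$ directly), a quenched recursion of the shape $A_{n+1}\le (\text{const})+\Psi_0(z)A_n$, where the multiplier is essentially $\Psi_i(z)=\frac{\mathbb E_\xi\sum_i|e^{\langle z,L_i\rangle}|^p}{|m_0(z)|^p}=\frac{\mathbb E_\xi\tilde Z_1(px)}{|m_0(z)|^p}$ (here $px=p\,\mathrm{Re}\,z$). Iterating gives $A_n(z)\le \text{const}\cdot\sum_{k=0}^{n-1}\prod_{i=0}^{k-1}\Psi_i(z)$, so the whole argument reduces to showing $\mathbb E\log\Psi_0(z)<0$ uniformly for $z\in K(\delta)$, because then by the ergodic theorem $\frac1k\sum_{i=0}^{k-1}\log\Psi_i(z)\to\mathbb E\log\Psi_0(z)<0$ a.s.\ and the series converges; some care (a uniform-in-$z$ Egorov/ergodic estimate, or bounding by the sup over a finite net combined with continuity) is needed to make the convergence of the random series hold simultaneously for all $z\in K(\delta)$ and to get a finite bound for $\mathbb E_\xi\sup_n\sup_z|W_n(z)|^p$.

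The key analytic input for $\mathbb E\log\Psi_0(z)<0$ is the strict convexity/differentiability of $\Lambda$: for $t\in I$ one has $\langle t,\nabla\Lambda(t)\rangle-\Lambda(t)<0$, equivalently $\frac{d}{ds}\big|_{s=1}\Lambda(st)<\Lambda(t)$ in an averaged sense, which gives $\mathbb E\log m_0(pt)<p\,\mathbb E\log m_0(t)=p\Lambda(t)$ for $p>1$ close to $1$; hence $\mathbb E\log\frac{\mathbb E_\xi\tilde Z_1(pt)}{m_0(t)^p}=\Lambda(pt)-p\Lambda(t)<0$. Then I extend this from real $t$ to complex $z\in K(\delta)$ by noting $|m_0(z)|\ge\alpha_0(t,\delta)$ (controlled in $\mathbb E\log^-$ by $\Omega_2$) and $\mathbb E_\xi\sum_i|e^{\langle z,L_i\rangle}|^p=\mathbb E_\xi\tilde Z_1(p\,\mathrm{Re}\,z)$ depends only on the real part, together with continuity of $\Lambda$ to absorb the perturbation; shrinking $\delta$ and $p_K$ if necessary keeps the average strictly negative on the whole compact neighbourhood. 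I expect this passage --- getting a \emph{uniform} strict negativity and turning it into a finite bound on $\mathbb E_\xi\sup_{z\in K(\delta)}\sup_n|W_n(z)|^p$ under only a stationary-ergodic (not i.i.d.) environment --- to be the main obstacle, since one cannot use independence of the $\Psi_i$'s and must instead combine the subadditive/Birkhoff ergodic theorem with a careful covering of $K(\delta)$.

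Finally, once $\sup_n\mathbb E_\xi|W_n(z)|^p$ is bounded uniformly on $K(\delta)$, uniform convergence follows by a standard argument: the $W_n(\cdot)$ are analytic in $z$ on the open polydisc $K(\delta)$, and $L^p$-boundedness plus Cauchy's integral formula gives uniform bounds on a slightly smaller polydisc for the derivatives, hence equicontinuity of the family; combined with a.s.\ convergence of $W_n(z)$ for each fixed $z$ in a dense countable subset (which holds because each $(W_n(z))_n$ is an $L^p$-bounded martingale, so converges a.s.\ and in $L^p$), a Montel/Vitali-type argument yields that $W_n$ converges uniformly on compact subsets of $K(\delta)$ a.s.\ to an analytic limit $W$, and the $\mathbb P_\xi$-$L^p$ convergence on $K(\delta)$ follows from uniform integrability of $|W_n(z)|^p$. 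Shrinking $\delta$ once more so that $\overline{K(\delta)}$ still sits inside the region where the bound holds completes the proof.
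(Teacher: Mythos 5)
Your plan shares the paper's core ingredients --- the martingale-difference decomposition at generation $n$, a von Bahr--Esseen/Burkholder inequality (the paper calls it Burkholder's inequality) giving $\mathbb E_\xi|W_{n+1}(z)-W_n(z)|^p\le C\,\frac{P_n(px)}{|P_n(z)|^p}\,\mathbb E_{T^n\xi}|W_1(z)-1|^p$, the negativity of $\Lambda(pt)-p\Lambda(t)$ near $p=1$ coming from $t\in I$, and the role of $\Omega_1,\Omega_2$ in controlling moments and $\log^-|m_0|$ --- so in spirit you are on the right track. But there are two places where the proposal is genuinely incomplete, and where the paper organizes things differently precisely to avoid them.

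First, the paper does \emph{not} go through ``$\sup_n\sup_z\mathbb E_\xi|W_n(z)|^p<\infty$ then Montel.'' It instead proves directly that $\sum_n\sup_{z\in D(t_0,\varepsilon)}\bigl(\mathbb E_\xi|W_{n+1}(z)-W_n(z)|^p\bigr)^{1/p}<\infty$ a.s. This is a strictly stronger statement: by Minkowski it gives uniform Cauchy convergence in $\mathbb P_\xi$-$L^p$ immediately, and one application of Cauchy's integral formula (bounding $\sup_{z\in D'}|W_{n+1}-W_n|$ on a smaller polydisc by an $L^1$-norm on the boundary of $D$, then H\"older) turns it into a.s.\ uniform convergence. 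Your route requires passing from $\sup_n\sup_z\mathbb E_\xi|W_n(z)|^p<\infty$ to a.s.\ local boundedness of the sequence $\{W_n\}$ so that Montel/Vitali can apply, and this is not automatic: you need a maximal inequality (Doob's $L^p$ inequality for each fixed $z$, applied to the complex martingale via subharmonicity of $|W_n(z)|^p$) combined with Cauchy's formula to exchange the two sups. You flag this as needing ``some care'' but leave exactly that gap open; the paper's choice of target avoids the problem.

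Second, for the uniform-in-$z$ negativity you suggest a ``finite net + Egorov'' device. The paper's mechanism is cleaner and is worth internalizing: it puts the sup over $D(t_0,\varepsilon)$ \emph{inside} the logarithm and shows (Lemma~\ref{LDMAl5}) that $\lim_{\varepsilon\downarrow 0}\mathbb E\log\sup_{z\in D(t_0,\varepsilon)}\frac{m_0(px)}{|m_0(z)|^p}=\Lambda(pt_0)-p\Lambda(t_0)<0$ by dominated convergence, with the dominating function supplied exactly by the $\Omega_2$-hypothesis (control of $\log^-\alpha_0(t_0,\delta)$) together with $\Lambda$ finite on a grid $\mathcal S_\delta$. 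Thus for $\varepsilon$ small enough the single random variable $\beta_0=\sup_z\bigl(m_0(px)/|m_0(z)|^p\bigr)^{1/p}$ already has $\mathbb E\log\beta_0<0$, and no net or Egorov argument is needed. Finally, the a.s.\ convergence of the resulting random series $\sum_n\beta_0\cdots\beta_{n-1}\gamma_n$ with stationary ergodic (not i.i.d.) multipliers is not ``just Birkhoff''; the paper invokes a dedicated lemma (\cite{wh19}, Lemma~2.1, stated here as Lemma~\ref{LMDS5L2} and Corollary~\ref{LPBRWL2.1}). You correctly identify this as the key obstacle in the random-environment setting, but you should point to (or reprove) a result of this type rather than gesture at Birkhoff or subadditive ergodic theorems alone.
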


We shall prove Theorem \ref{Conver1.1} with the method of Biggins \cite{b92} (also see Attia \cite{na}). The basic technique is to use Cauchy's formula and the inequality for martingales. 
For our model, comparing with the deterministic environment case, the crucial differences in the proof are reflected in the following lemmas.

\begin{lem}\label{LDMAl3}
Let $z_0=x_0+\mathbf{i}y_0\in\mathbb C^d$ and $\varepsilon>0$. Put $W^*(z_0,\varepsilon)=\underset{z\in D(z_0,\varepsilon)}\sup|W_1(z)|$. Then
$$W^*(z_0,\varepsilon)\leq \alpha_0(z_0, \varepsilon)^{-1}\sum_{s\in \mathcal S}\tilde Z_1(s),$$
where $\mathcal S=\{s=(s^1,\cdots,s^d)\in\mathbb R^d:s^j=s_1^j \text{ or } s_2^j,j=1,\cdots,d\}$ with $s_1^j:=x_0^j-\varepsilon$ and $s_2^j:=x_0^j+\varepsilon$.
\end{lem}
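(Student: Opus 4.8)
The plan is to bound $|W_1(z)|$ uniformly over the polydisc $D(z_0,\varepsilon)$ by separately controlling the numerator $|\tilde Z_1(z)|$ from above and the denominator $|P_1(z)|=|m_0(z)|$ from below. For the denominator, the definition of $\alpha_0(z_0,\varepsilon)=\inf_{\zeta\in D(z_0,\varepsilon)}|m_0(\zeta)|$ immediately gives $|m_0(z)|\ge\alpha_0(z_0,\varepsilon)$ for every $z\in D(z_0,\varepsilon)$, hence $|W_1(z)|\le\alpha_0(z_0,\varepsilon)^{-1}|\tilde Z_1(z)|$. It remains to show that $|\tilde Z_1(z)|\le\sum_{s\in\mathcal S}\tilde Z_1(s)$ for all $z\in D(z_0,\varepsilon)$.

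For this, write $\tilde Z_1(z)=\sum_{u\in\mathbb T_1}e^{\langle z,S_u\rangle}$ and estimate each exponential by its modulus: since $\langle z,S_u\rangle=\langle x,S_u\rangle+\mathbf i\langle y,S_u\rangle$ with $x=\mathrm{Re}\,z$, $y=\mathrm{Im}\,z$ real (note $S_u\in\mathbb R^d$, so the inner product has no conjugation effect here), we get $|e^{\langle z,S_u\rangle}|=e^{\langle x,S_u\rangle}=e^{\sum_{j=1}^d x^j S_u^j}$. The key elementary observation is that for $z\in D(z_0,\varepsilon)$ we have $|x^j-x_0^j|<\varepsilon$, so $x^j\in(s_1^j,s_2^j)$ for each coordinate $j$, and therefore $x^j S_u^j\le\max\{s_1^j S_u^j,\,s_2^j S_u^j\}$ regardless of the sign of $S_u^j$ (a linear function on an interval attains its max at an endpoint). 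Consequently
\begin{equation*}
e^{\langle x,S_u\rangle}=\prod_{j=1}^d e^{x^j S_u^j}\le\prod_{j=1}^d\max\{e^{s_1^j S_u^j},e^{s_2^j S_u^j}\}\le\sum_{s\in\mathcal S}\prod_{j=1}^d e^{s^j S_u^j}=\sum_{s\in\mathcal S}e^{\langle s,S_u\rangle},
\end{equation*}
where the middle inequality bounds a product of maxima by the sum over all $2^d$ endpoint combinations (each term being nonnegative). Summing over $u\in\mathbb T_1$ and exchanging the two finite sums yields $|\tilde Z_1(z)|\le\sum_{u\in\mathbb T_1}\sum_{s\in\mathcal S}e^{\langle s,S_u\rangle}=\sum_{s\in\mathcal S}\tilde Z_1(s)$. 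Taking the supremum over $z\in D(z_0,\varepsilon)$ on the left and combining with the denominator bound gives the claim.

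I do not expect a genuine obstacle here; the only point requiring a little care is the coordinate-wise reduction: one must remember that $S_u$ is a real vector so that $|e^{\langle z,S_u\rangle}|$ depends only on $\mathrm{Re}\,z$, and that the bound $x^j S_u^j\le\max\{s_1^j S_u^j,s_2^j S_u^j\}$ holds by convexity/linearity without any sign assumption on $S_u^j$. The passage from $\prod_j\max\{\cdot,\cdot\}$ to $\sum_{s\in\mathcal S}\prod_j(\cdot)$ is just the distributive law together with nonnegativity of all summands, so nothing is lost beyond a harmless constant factor $2^d$ absorbed into the sum over $\mathcal S$.
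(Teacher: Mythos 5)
Your proof is correct and uses essentially the same coordinate-wise reduction as the paper: the paper bounds $e^{\langle x,S_u\rangle}$ by splitting on the sign of each $S_u^j$ with indicator functions and iterating over $j=1,\dots,d$, whereas you phrase the identical estimate as $x^j S_u^j\le\max\{s_1^j S_u^j,s_2^j S_u^j\}$ followed by expanding the product of maxima into a sum over the $2^d$ endpoint combinations. Both proofs then finish by dividing through by $\alpha_0(z_0,\varepsilon)$.
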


\begin{proof}For $z\in  D(z_0,\varepsilon)$, we have $|x^j-x_0^j|<\varepsilon$, so that $s_1^j<x^j<s_2^j$  (for all $j$).
Thus for all $z\in D(z_0,\varepsilon)$,
\begin{eqnarray}\label{LDMAl3e}
|\tilde Z_1(z)|\leq\sum_{u\in\mathbb T_1}e^{\langle x, S_u\rangle}
&=&\sum_{u\in\mathbb T_1}e^{x^1S_u^1}\mathbf{1}_{\{S_u^1<0\}}\exp{\{\sum\limits_{j=2}^dx^jS_u^j\}}+\sum_{u\in\mathbb T_1}e^{x^1S_u^1}\mathbf{1}_{\{S_u^1\geq0\}}\exp{\{\sum\limits_{j=2}^dx^jS_u^j\}}\nonumber\\
&\leq&\sum_{u\in\mathbb T_1}e^{s_1^1S_u^1}\exp{\{\sum\limits_{j=2}^dx^jS_u^j\}}+\sum_{u\in\mathbb T_1}e^{s_2^1S_u^1}\exp{\{\sum\limits_{j=2}^dx^jS_u^j\}}\nonumber\\
&\cdots&\nonumber\\
&\leq&\sum_{s\in \mathcal S}\sum_{u\in\mathbb T_1}e^{\langle s, S_u\rangle}=\sum_{s\in \mathcal S}\tilde Z_1(s).
\end{eqnarray}
Therefore,
$$W^*(z_0,\varepsilon)=\sup_{z\in  D(z_0,\varepsilon)}\left|\frac{\tilde Z_1(z)}{m_0(z)}\right|\leq\frac{\sum_{s\in \mathcal S}\tilde Z_1(s)}{\alpha_0(z_0,\varepsilon)}.$$
\end{proof}

\begin{lem}\label{LDMAl4}
If $t_0\in\Omega_1\bigcap \Omega_2$, then there exist $\varepsilon_0>0$ and $p_0>1$ such that for all $ 0<\varepsilon\leq\varepsilon_0$ and $1<p\leq p_0$,
\begin{equation}\label{LDMAl4e}
\mathbb E\log^+\sup_{z\in  D(t_0,\varepsilon)}\left(\mathbb E_\xi|W_1(z)|^p\right)^{1/p}<\infty.
\end{equation}
\end{lem}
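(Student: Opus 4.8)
The plan is to reduce the supremum over the polydisc $D(t_0,\varepsilon)$ to a finite sum of real evaluations via Lemma \ref{LDMAl3}, and then bound the resulting $\mathbb P_\xi$-$L^p$ norms using the definitions of $\Omega_1$ and $\Omega_2$. Concretely, apply Lemma \ref{LDMAl3} with $z_0=t_0$ (so $y_0=0$): it gives $W^*(t_0,\varepsilon)\leq \alpha_0(t_0,\varepsilon)^{-1}\sum_{s\in\mathcal S}\tilde Z_1(s)$, where $\mathcal S$ is the finite set of $2^d$ corners $s^j\in\{t_0^j-\varepsilon,t_0^j+\varepsilon\}$. Since $\sup_{z\in D(t_0,\varepsilon)}|W_1(z)|=W^*(t_0,\varepsilon)$ is $\mathcal F_1$-measurable and dominates $|W_1(z)|$ pointwise, we get
\begin{equation*}
\left(\mathbb E_\xi \sup_{z\in D(t_0,\varepsilon)}|W_1(z)|^p\right)^{1/p}\leq \alpha_0(t_0,\varepsilon)^{-1}\left(\mathbb E_\xi\Big(\sum_{s\in\mathcal S}\tilde Z_1(s)\Big)^p\right)^{1/p}\leq \alpha_0(t_0,\varepsilon)^{-1}\sum_{s\in\mathcal S}\left(\mathbb E_\xi\tilde Z_1(s)^p\right)^{1/p},
\end{equation*}
using Minkowski's inequality in $\mathbb P_\xi$-$L^p$ for the finite sum. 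Note $\sup_{z\in D(t_0,\varepsilon)}\mathbb E_\xi|W_1(z)|^p\leq \mathbb E_\xi\sup_{z\in D(t_0,\varepsilon)}|W_1(z)|^p$, so it suffices to bound the right-hand side.

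Next, take $\log^+$ and use $\log^+(ab\cdots)\leq \log^+a+\log^+b+\cdots+C$ together with $\log^+(\alpha_0^{-1})=\log^-\alpha_0$: this yields
\begin{equation*}
\log^+\sup_{z\in D(t_0,\varepsilon)}\left(\mathbb E_\xi|W_1(z)|^p\right)^{1/p}\leq \log^-\alpha_0(t_0,\varepsilon)+\sum_{s\in\mathcal S}\frac1p\log^+\mathbb E_\xi\tilde Z_1(s)^p+C_d,
\end{equation*}
for a constant $C_d$ depending only on $d$. Taking $\mathbb E$, the term $\mathbb E\log^-\alpha_0(t_0,\varepsilon)$ is finite for all small enough $\varepsilon$ because $t_0\in\Omega_2$ provides some $\delta_{t_0}>0$ with $\mathbb E\log^-\alpha_0(t_0,\delta_{t_0})<\infty$, and $\alpha_0(t_0,\varepsilon)$ is nondecreasing as $\varepsilon\downarrow 0$ (the infimum is over a shrinking polydisc), so $\log^-\alpha_0(t_0,\varepsilon)\leq \log^-\alpha_0(t_0,\delta_{t_0})$ once $\varepsilon\leq\delta_{t_0}$. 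For the remaining terms, the point is that $t_0\in\Omega_1$ gives $p_1>1$ with $\mathbb E\log^+\mathbb E_\xi\tilde Z_1(t_0)^{p_1}<\infty$, and $\tilde Z_1(s)=\sum_{u\in\mathbb T_1}e^{\langle s,S_u\rangle}$ for $s$ a corner of the cube differs from $\tilde Z_1(t_0)$ only through the bounded perturbation $e^{\langle s-t_0,S_u\rangle}$ with $\|s-t_0\|\leq \varepsilon\sqrt d$; one controls this using Hölder's inequality to trade a slightly smaller exponent against moments of $\tilde Z_1(t_0\pm \text{(something)})$, or more simply by observing that $\Omega_1$ is open (this is part of its definition via "int") so that for $\varepsilon$ small all $2^d$ corners lie in $\Omega_1$ with a common exponent $p_0\in(1,p_1]$, whence each $\mathbb E\log^+\mathbb E_\xi\tilde Z_1(s)^{p_0}<\infty$.

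The main obstacle I anticipate is the last point: making precise that the corners $s\in\mathcal S$ all lie in $\Omega_1$ with one uniform exponent $p$, and that one can choose $p\leq p_0$ simultaneously working for every corner. This requires either (i) invoking openness of $\Omega_1$ and a covering/finiteness argument over the $2^d$ corners to extract a common $p_0$ and $\varepsilon_0$, or (ii) a direct Hölder estimate: for $s=t_0+v$ with $\|v\|$ small, write $e^{\langle v,S_u\rangle}\leq e^{\langle v^+, S_u^+\rangle}+\cdots$ splitting by signs of coordinates as in the proof of Lemma \ref{LDMAl3}, bound $\tilde Z_1(s)\leq \sum \tilde Z_1(t_0\pm\varepsilon e_j\text{-type corners})$ and note these are again finitely many real evaluations near $t_0$; then absorb everything into $\log^+$ and $\mathbb E$. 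Either way the estimate is uniform in $p\in(1,p_0]$ because $\mathbb E_\xi\tilde Z_1(s)^p$ is nondecreasing in $p$ on the event $\{\tilde Z_1(s)\geq 1\}$ and bounded by $1$ otherwise, so $\log^+\mathbb E_\xi\tilde Z_1(s)^p\leq \log^+\mathbb E_\xi\tilde Z_1(s)^{p_0}$ for $p\leq p_0$ — this monotonicity is what lets a single integrability statement cover the whole range, and it is the technical heart that I would write out carefully.
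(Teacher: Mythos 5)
Your proposal takes essentially the same route as the paper: bound $W^*(t_0,\varepsilon)$ by $\alpha_0(t_0,\varepsilon)^{-1}\sum_{s\in\mathcal S}\tilde Z_1(s)$ via Lemma~\ref{LDMAl3}, apply Minkowski in $\mathbb P_\xi$-$L^p$, control $\mathbb E\log^-\alpha_0$ from $\Omega_2$, and use openness of $\Omega_1$ to put all $2^d$ corners in $\Omega_1$ with a common exponent $p_0$. One small imprecision: the justification ``$\mathbb E_\xi\tilde Z_1(s)^p$ is nondecreasing in $p$ on the event $\{\tilde Z_1(s)\geq 1\}$ and bounded by $1$ otherwise'' confuses the integrand with the integral; the clean argument (and the paper's, via H\"older) is that $p\mapsto(\mathbb E_\xi|W_1(z)|^p)^{1/p}$ is nondecreasing on the probability space $(\Gamma,\mathcal G,\mathbb P_\xi)$, which immediately gives $\sup_{z\in D(t_0,\varepsilon)}(\mathbb E_\xi|W_1(z)|^{p})^{1/p}\le\sup_{z\in D(t_0,\varepsilon_0)}(\mathbb E_\xi|W_1(z)|^{p_0})^{1/p_0}$ for all $p\le p_0$ and $\varepsilon\le\varepsilon_0$.
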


\begin{proof}Since $t_0\in\Omega_2$, there exists $\delta>0$ such that $\mathbb E\log^-\alpha_0(t_0,\delta)<\infty$. Notice that $\alpha_0(t_0,\delta)\leq \alpha_0(t_0,\varepsilon )$ if $\varepsilon \leq \delta$, so that $\mathbb E\log^-\alpha_0(t_0,\varepsilon)\leq\mathbb E\log^-\alpha_0(t_0,\delta)<\infty$. Put $\mathcal{S}_{\varepsilon_0}=\{s=(s^1,\cdots,s^d)\in\mathbb R^d:s^j=t_0^j+\varepsilon_0 \text{ or } t_0^j-\varepsilon_0 ,j=1,\cdots,d\}$. Since $t_0\in \Omega_1$, we can choose $0<\varepsilon_0\leq\delta$ small enough such that $\mathcal{S}_{\varepsilon_0}\subset\Omega_1$. Thus for every $s\in \mathcal{S}_{\varepsilon_0}$, there exists $p_s>1$ such that $\mathbb{E}\log^+\mathbb{E}_\xi\tilde{Z}_1(s)^{p_s}<\infty$. Take $p_0=\underset{\mathcal{S}}\min\{p_s\}$. Using H\"older's inequality, Lemma \ref{LDMAl3} and Minkowski's inequality, we obtain for all $ 0<\varepsilon\leq\varepsilon_0$ and $1<p\leq p_0$,
\begin{eqnarray*}
\sup_{z\in  D(t_0,\varepsilon)}\left(\mathbb E_\xi|W_1(z)|^p\right)^{1/p}&\leq&\sup_{z\in  D(t_0,\varepsilon_0)}\left(\mathbb E_\xi|W_1(z)|^{p_0}\right)^{1/{p_0}}\\
&\leq& \left(\mathbb E_\xi W^*(t_0,\varepsilon_0)^{p_0}\right)^{1/{p_0}}\\
&\leq&\alpha_0(t_0,\varepsilon_0)^{-1}\sum_{s\in \mathcal S_{\varepsilon_0}}\left(\mathbb E_\xi\tilde Z_1(s)^{p_0}\right)^{1/{p_0}},
\end{eqnarray*}
which implies (\ref{LDMAl4e}) immediately since $\mathbb E\log^-\alpha_0(t_0,\varepsilon_0)<\infty$ and $\mathbb{E}\log^+\mathbb{E}_\xi\tilde{Z}_1(s)^{p_0}<\infty$.
\end{proof}

\begin{lem}\label{LDMAl5}
If $t_0\in I\bigcap \Omega_2$, then there exists $p_0>1$ such that for all  $1<p\leq p_0$,
\begin{equation}\label{LDMAl5e}
\lim_{\varepsilon\downarrow0}\mathbb E\log\sup_{z\in  D(t_0,\varepsilon)} \frac{m_0(px)}{|m_0(z)|^p}<0.
\end{equation}
\end{lem}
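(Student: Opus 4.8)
The plan is to exploit the fact that the condition $t_0\in I$ says precisely that $\langle t_0,\nabla\Lambda(t_0)\rangle - \Lambda(t_0) < 0$, i.e. $\Lambda^*(\nabla\Lambda(t_0)) < 0$, while $t_0 \in \Omega_2$ guarantees integrability of $\log^- m_0$ near $t_0$. First I would reduce the quantity inside the expectation to something built out of $\log m_0$. For $z \in D(t_0,\varepsilon)$ write $z = x + \mathbf{i}y$ with $\|x - t_0\|$ and $\|y\|$ both $O(\varepsilon)$; since $|m_0(z)| \geq \alpha_0(t_0,\varepsilon)$ and $m_0(px) \leq m_0(pt_0)\cdot(\text{something controlled by }\varepsilon)$ by continuity/convexity of $t\mapsto \log m_0(t)$ along the segment from $pt_0$ to $px$, I would bound
\[
\sup_{z\in D(t_0,\varepsilon)} \frac{m_0(px)}{|m_0(z)|^p} \leq \frac{\sup_{\|x-t_0\|\le c\varepsilon} m_0(px)}{\alpha_0(t_0,\varepsilon)^p}.
\]
Taking $\log$ and then $\mathbb{E}$, the denominator contributes $-p\,\mathbb{E}\log\alpha_0(t_0,\varepsilon)$, which is finite (by $t_0\in\Omega_2$) and in fact, as $\varepsilon\downarrow 0$, tends to $-p\,\mathbb{E}\log m_0(t_0) = -p\,\Lambda(t_0)$ by dominated convergence using the $\log^-$ bound together with an elementary $\log^+$ bound on $\alpha_0$.

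The next step is to evaluate $\lim_{\varepsilon\downarrow 0}\mathbb{E}\log\sup_{\|x-t_0\|\le c\varepsilon} m_0(px)$. By continuity of $z\mapsto m_0(z)$ and monotone/dominated convergence (the supremum decreases to $m_0(pt_0)$ as $\varepsilon\downarrow 0$, and is dominated for small $\varepsilon$ by $m_0(p t_0')$ for a fixed nearby $t_0'$ which lies in $\Omega_1 \supset \{pt_0\}$-neighbourhood... — here one uses that $pt_0$ is close to $t_0$ and $\Lambda$ is finite everywhere by the standing assumption $\Omega_\Lambda = \mathbb{R}^d$), this limit equals $\mathbb{E}\log m_0(pt_0) = \Lambda(pt_0)$. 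Combining,
\[
\lim_{\varepsilon\downarrow 0}\mathbb{E}\log\sup_{z\in D(t_0,\varepsilon)}\frac{m_0(px)}{|m_0(z)|^p} = \Lambda(pt_0) - p\,\Lambda(t_0) =: g(p).
\]
Now $g(1) = 0$, and $g$ is differentiable at $p=1$ with $g'(1) = \langle t_0,\nabla\Lambda(t_0)\rangle - \Lambda(t_0) < 0$ since $t_0\in I$. Hence there is $p_0>1$ with $g(p) < 0$ for all $1 < p \le p_0$, which is exactly \eqref{LDMAl5e}.

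The main obstacle I anticipate is justifying the interchange of limit and expectation: one must produce, for all sufficiently small $\varepsilon$, a single integrable dominating function for $\log\sup_{z\in D(t_0,\varepsilon)}\frac{m_0(px)}{|m_0(z)|^p}$. The $\log^-$ part is handled by $t_0\in\Omega_2$; the $\log^+$ part requires bounding $\mathbb{E}\log^+ \sup_{\|x - t_0\|\le c\varepsilon_0} m_0(px)$ by something finite. This should follow from $t_0\in\Omega_1$ after shrinking $\varepsilon$: choosing $p_0$ and $\varepsilon_0$ so that the relevant corner points $s$ (as in Lemma \ref{LDMAl4}) lie in $\Omega_1$, one gets $\mathbb{E}\log^+\mathbb{E}_\xi\tilde Z_1(s)^{p_0} < \infty$, and $m_0(px) = \mathbb{E}_\xi\tilde Z_1(px) \le \sum_{s}\mathbb{E}_\xi\tilde Z_1(s)$ by the same convexity-of-exponentials argument used in \eqref{LDMAl3e}. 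A minor secondary point is the differentiability of $p\mapsto\Lambda(pt_0)$ at $p=1$, which is immediate from differentiability of $\Lambda$ on $\mathbb{R}^d$ (the standing assumption $\Omega_\Lambda=\mathbb{R}^d$) via the chain rule.
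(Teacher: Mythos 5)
Your proposal follows essentially the same route as the paper: reduce the expression to $g(p)=\Lambda(pt_0)-p\Lambda(t_0)$ by dominated convergence, then observe $g(1)=0$ and $g'(1)=\langle t_0,\nabla\Lambda(t_0)\rangle-\Lambda(t_0)<0$ because $t_0\in I$, which forces $g(p)<0$ for $p$ slightly above $1$. The paper phrases this slightly differently — it works with the single function $f(z,p)=\log m_0(px)-p\log|m_0(z)|$ and applies dominated convergence to $\sup_{z\in D(t_0,\varepsilon)}f(z,p)$ rather than separating numerator and denominator as you do — but this is cosmetic; the content is identical.

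There is, however, one point you should fix. In your ``main obstacle'' paragraph you justify the $\log^+$ integrability of $\sup m_0(px)$ by invoking $t_0\in\Omega_1$, via the corner points $s$ and $\mathbb{E}\log^+\mathbb{E}_\xi\tilde Z_1(s)^{p_0}<\infty$. But the hypothesis of this lemma is $t_0\in I\cap\Omega_2$ only; $t_0\in\Omega_1$ is not assumed (that hypothesis appears in Lemma~\ref{LDMAl4}, not here). Fortunately you do not need it: the same corner-point inequality applied at the level of means gives $m_0(px)\le\sum_{s\in\mathcal S_\delta}m_0(s)$, and each $\mathbb{E}\log^+ m_0(s)$ is finite simply because $\Lambda(s)=\mathbb{E}\log m_0(s)$ is a real number for every $s$ under the standing assumption $\Omega_\Lambda=\mathbb{R}^d$. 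That is what the paper uses, and it is weaker than $\Omega_1$ (which controls $p$-th moments of $\tilde Z_1$, not merely $m_0$). You actually mention the $\Omega_\Lambda=\mathbb{R}^d$ fact earlier in your argument, so this is a matter of citing the right hypothesis consistently, not a missing idea.
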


\begin{proof}
For $z=x+\mathbf{i}y\in\mathbb C^d$ and $p\in\mathbb R$, set  $f(z,p)=\log m_0(px)-p\log |m_0(z)|$ and $g(z,p)=\mathbb E f(z,p)$. While $t_0$ is fixed,
taking the derivative of the function $g(t_0,p)$ with respective to $p$ gives
$$\left.\frac{\partial g(t_0,p)}{\partial p}\right|_{p=1}=\langle t_0,\nabla\Lambda(t_0)\rangle-\Lambda(t_0)<0,$$
since $t_0\in I$. Therefore, as a function varying with $p$, $g(t_0,p)$ is strictly decreasing near $p=1$, hence there exists $p_1>1$ such that
$g(t_0,p)<g(t_0,1)=0$ for $1<p\leq p_1$.

Notice that
\begin{equation}\label{LDMAl5e2}
\mathbb E\log\sup_{z\in  D(t_0,\varepsilon)} \frac{m_0(px)}{|m_0(z)|^p}=
\mathbb E\sup_{z\in  D(t_0,\varepsilon)}\log \frac{m_0(px) }{|m_0(z)|^p}
=\mathbb E\sup_{z\in  D(t_0,\varepsilon)}f(z,p).
\end{equation}
We shall prove that there exist $\varepsilon_1>0$ and $p_2>1$ such that for $1<p\leq p_2$,
\begin{equation}\label{LDMAl5e3}
\mathbb E\sup_{z\in  D(t_0,\varepsilon_1)}|f(z,p)|<\infty.
\end{equation}
Since $t_0\in\Omega_2$, there exists $\delta>0$ such that $\mathbb E\log^-\alpha_0(t_0,\delta)<\infty$.
Take $0<\varepsilon_1<\delta$. Then $D(t_0,\varepsilon_1)\subset D(t_0,\delta)$, so that $|m_0(z)|\geq \alpha_0(t_0,\delta)$ for $z\in D(t_0,\varepsilon_1)$. Moreover, we can choose $p_2>1$ such that $px\in D(t_0,\delta)$ for $z=x+\mathbf i y\in D(t_0,\varepsilon_1)$ if $1<p\leq p_2$.
In fact, for $1<p\leq p_2$, it is clear that
$$|px^i-t_0^i|\leq p|x^i-t_0^i|+(p-1)|t_0^i|\leq p_2\varepsilon_1+(p_2-1)|t_0^i|<\delta$$
if we take $p_2>1$ small enough. 
On the other hand, by taking expectation $\mathbb E_\xi$ in (\ref{LDMAl3e}), we get for  $z=x+\mathbf i y\in D(t_0,\varepsilon_1)$ and $1<p\leq p_2$,
$$\max\{m_0(x),m_0(px)\}\leq \sum_{s\in \mathcal {S}_{\delta}}m_0(s),$$
where $\mathcal S_{\delta}=\{s=(s^1,\cdots,s^d)\in\mathbb R^d:s^j=t_0^j+\delta \text{ or } t_0^j-\delta ,j=1,\cdots,d\}$. Thus for $1<p\leq p_2$,
\begin{eqnarray*}
\sup_{z\in D(t_0,\varepsilon_1)}|f(z,p)|&\leq& \sup_{z\in D(t_0,\varepsilon_1)}\left(\log^+m_0(px)+\log^-m_0(px)+p\log^+m_0(x)+p\log^-|m_0(z)|\right)\\
&\leq& (p+1)\left(\log^+\sum_{s\in \mathcal {S}_{\delta}}m_0(s)+\log^- \alpha_0(t_0,\delta)\right).
\end{eqnarray*}
Taking expectation $\mathbb E$ in the above inequality  yields (\ref{LDMAl5e3}), since $\mathbb E\log^-\alpha_0(t_0,\delta)<\infty$ and $\Lambda(s)$ is a  real number for every $s$.

Take $p_0=\min\{p_1,p_2\}$. For $1<p\leq p_0$, letting $\varepsilon\downarrow0$ in (\ref{LDMAl5e2}) and noticing (\ref{LDMAl5e3}), by the dominated convergence theorem, we deduce
$$\lim_{\varepsilon\downarrow0}\mathbb E\log\sup_{z\in  D(t_0,\varepsilon)} \frac{m_0(px)}{|m_0(z)|^p}
=\lim_{\varepsilon\downarrow0}\mathbb E\sup_{z\in  D(t_0,\varepsilon)}f(z,p)=\mathbb Ef(t_0,p)=g(t_0,p)<0.$$
The proof is complete.
\end{proof}

\begin{lem}[\cite{wh19}, Lemma 2.1]\label{LMDS5L2}
Let $(\alpha_n,\beta_n,\gamma_n)_{n\geq0}$ be a stationary and ergodic sequence of non-negative random variables. If $\mathbb{E}|\log\alpha_0|<\infty$, $\mathbb{E}|\log\beta_0|<\infty$ and  $\mathbb{E}\log^+\gamma_0<\infty$, then
\begin{equation}\label{LMDS5L2e1}
\limsup_{n\rightarrow\infty}\frac{1}{n}\log\left(\gamma_n\sum_{k=0}^{n}\alpha_0\cdots\alpha_{k-1}\beta_k\cdots\beta_{n-1}\right)\leq\max\{\mathbb E\log\alpha_0,\mathbb{E}\log\beta_0\}\quad a.s..
\end{equation}
\end{lem}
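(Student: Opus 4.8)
The plan is to reduce the double-indexed sum to something controllable by the subadditive (Birkhoff) ergodic theorem applied separately to the products $\alpha_0\cdots\alpha_{k-1}$ and $\beta_k\cdots\beta_{n-1}$, together with a crude bound on the prefactor $\gamma_n$. First I would observe that, by the ergodic theorem applied to the stationary ergodic sequence $(\log\alpha_n)$ (integrable by hypothesis), for every $\epsilon>0$ there is almost surely a (random) $K_\epsilon$ such that $\frac1k\log(\alpha_0\cdots\alpha_{k-1})\le \mathbb E\log\alpha_0+\epsilon$ for all $k\ge K_\epsilon$; similarly $\frac1{n-k}\log(\beta_k\cdots\beta_{n-1})\le\mathbb E\log\beta_0+\epsilon$ once $n-k$ is large, but here I must be careful because this is a statement about the tail of the sequence $(\beta_n)$ starting from index $k$, which itself grows with $n$. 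The clean way to handle the $\beta$-part is to rewrite $\log(\beta_k\cdots\beta_{n-1})=\sum_{j=0}^{n-1}\log\beta_j-\sum_{j=0}^{k-1}\log\beta_j$ and apply Birkhoff to the single sequence of partial sums $T_n:=\sum_{j=0}^{n-1}\log\beta_j$, for which $T_n/n\to\mathbb E\log\beta_0$ a.s.; then $\log(\beta_k\cdots\beta_{n-1})=T_n-T_k$, and uniformly over $0\le k\le n$ one has $T_n-T_k\le n(\mathbb E\log\beta_0+\epsilon)-k(\mathbb E\log\beta_0-\epsilon)$ for $n$ large, using that $T_n/n$ and $T_k/k$ are both eventually within $\epsilon$ of the mean and bounding the finitely many small-$k$ terms by a constant.

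Next, combining the two estimates, each summand satisfies, for $n$ large and uniformly in $k\in\{0,\dots,n\}$,
\[
\log\bigl(\alpha_0\cdots\alpha_{k-1}\beta_k\cdots\beta_{n-1}\bigr)\le k(\mathbb E\log\alpha_0+\epsilon)+(n-k)(\mathbb E\log\beta_0+\epsilon)+C
\le n\max\{\mathbb E\log\alpha_0,\mathbb E\log\beta_0\}+n\epsilon+C,
\]
where $C$ is an a.s.\ finite random constant absorbing the contributions of the finitely many initial indices. Hence the sum of the $n+1$ terms is at most $(n+1)e^{C}e^{n(\max\{\mathbb E\log\alpha_0,\mathbb E\log\beta_0\}+\epsilon)}$. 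For the prefactor, since $\mathbb E\log^+\gamma_0<\infty$ and $(\gamma_n)$ is stationary, a Borel--Cantelli argument (or the standard fact that $\mathbb E\log^+\gamma_0<\infty$ implies $\frac1n\log^+\gamma_n\to0$ a.s.) gives $\frac1n\log\gamma_n\to0$ a.s.\ along the relevant subsequence; more precisely $\limsup_n\frac1n\log^+\gamma_n=0$. Taking $\frac1n\log$ of the product and letting $n\to\infty$ then yields $\limsup_n\frac1n\log(\gamma_n\sum_k\cdots)\le\max\{\mathbb E\log\alpha_0,\mathbb E\log\beta_0\}+\epsilon$, and since $\epsilon>0$ is arbitrary the claim follows.

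The main obstacle, and the step demanding the most care, is the uniformity in $k$ of the estimate on $\beta_k\cdots\beta_{n-1}$: one cannot simply apply the ergodic theorem "to the shifted sequence starting at $k$" because $k$ ranges up to $n$ and the shift depends on $n$. The trick of passing through the single partial-sum process $T_n$ and writing $\beta_k\cdots\beta_{n-1}=\exp(T_n-T_k)$ resolves this, at the cost of the mild bookkeeping of splitting off the indices $k\le K_\epsilon$ (whose product of $\beta$'s is a fixed random number) from the indices $K_\epsilon<k\le n$ (where $T_k/k$ is already close to the mean). A symmetric remark applies to the $\alpha$-product, though there the natural direction of the ergodic theorem already matches the index range. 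Everything else — the handling of $\gamma_n$ and the final $\frac1n\log$ of a sum of $n+1$ comparably-sized terms — is routine.
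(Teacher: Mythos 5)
Your proof is correct. The paper itself does not prove this lemma --- it cites it from \cite{wh19} --- so there is no in-paper proof to compare against, but your argument is the natural and (to my knowledge) standard one: the key observation you correctly isolate is that one must not apply the ergodic theorem to a shift $T^k\xi$ with $k$ ranging up to $n$, and the fix via the single partial-sum process $T_n=\sum_{j<n}\log\beta_j$ and the telescoping identity $\beta_k\cdots\beta_{n-1}=e^{T_n-T_k}$ gives the required uniformity in $k$, while the Borel--Cantelli consequence $\limsup_n\frac1n\log^+\gamma_n\le0$ a.s.\ (valid for any stationary sequence with $\mathbb{E}\log^+\gamma_0<\infty$) handles the prefactor.
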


From Lemma \ref{LMDS5L2}, we  deduce the following statements.

\begin{co}\label{LPBRWL2.1}
Let $(\alpha_n,\beta_n,\gamma_n)_{n\geq0}$ be a stationary and ergodic sequence of non-negative random variables. Assume that  $\mathbb{E}|\log\alpha_0|<\infty$, $\mathbb{E}|\log\beta_0|<\infty$ and  $\mathbb{E}\log^+\gamma_0<\infty$.
\begin{itemize}
\item[(a)]If $\max\{\mathbb{E}\log\alpha_0, \mathbb{E}\log\beta_0\}<0$, then the series
$\sum\limits_{n=0}^{\infty}\gamma_n\sum\limits_{k=0}^{n}\alpha_0\cdots\alpha_{k-1}\beta_k\cdots\beta_{n-1} <\infty\quad a.s..$

\item[(b)]If $\mathbb{E}\log\alpha_0<0$, then the series
$\sum\limits_{n=0}^{\infty}\alpha_0\cdots\alpha_{n-1}\gamma_n<\infty\quad a.s..$
\end{itemize}
\end{co}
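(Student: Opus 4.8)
The plan is to derive Corollary \ref{LPBRWL2.1} from Lemma \ref{LMDS5L2} by exploiting the gap between the logarithmic growth rate supplied by \eqref{LMDS5L2e1} and the desired summability, which is the standard trick: if a nonnegative sequence $(b_n)$ satisfies $\limsup_n \frac1n\log b_n \le -c < 0$ almost surely, then on the event where this holds, for all large $n$ we have $b_n \le e^{-cn/2}$, so $\sum_n b_n < \infty$ almost surely. So the whole task reduces to checking the hypotheses of Lemma \ref{LMDS5L2} in each case and identifying the relevant $b_n$.

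For part (a), I would set $b_n = \gamma_n\sum_{k=0}^n \alpha_0\cdots\alpha_{k-1}\beta_k\cdots\beta_{n-1}$. The triple $(\alpha_n,\beta_n,\gamma_n)_{n\ge0}$ is stationary and ergodic, and the integrability hypotheses $\mathbb E|\log\alpha_0|<\infty$, $\mathbb E|\log\beta_0|<\infty$, $\mathbb E\log^+\gamma_0<\infty$ are exactly those of the lemma, so \eqref{LMDS5L2e1} applies and gives $\limsup_n \frac1n\log b_n \le \max\{\mathbb E\log\alpha_0,\mathbb E\log\beta_0\} =: -c$ almost surely, with $c>0$ by the assumption $\max\{\mathbb E\log\alpha_0,\mathbb E\log\beta_0\}<0$. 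Fix $\varepsilon$ with $0<\varepsilon<c$. Almost surely there is a (random) $n_0$ such that $b_n \le e^{-(c-\varepsilon)n}$ for all $n\ge n_0$; summing a convergent geometric tail and adding the finitely many initial terms gives $\sum_{n=0}^\infty b_n<\infty$ almost surely.

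For part (b), the point is to present the series $\sum_n \alpha_0\cdots\alpha_{n-1}\gamma_n$ in the form covered by Lemma \ref{LMDS5L2}. I would apply the lemma with the second sequence replaced by the constant sequence $\beta_n \equiv 1$ — then $\mathbb E|\log\beta_0| = 0<\infty$, the hypotheses are still met, and the inner sum $\sum_{k=0}^n \alpha_0\cdots\alpha_{k-1}\beta_k\cdots\beta_{n-1}$ collapses so that the expression in \eqref{LMDS5L2e1} dominates $\alpha_0\cdots\alpha_{n-1}\gamma_n$ (indeed the $k=n$ term of the inner sum is exactly $\alpha_0\cdots\alpha_{n-1}$). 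Hence $\limsup_n \frac1n\log(\alpha_0\cdots\alpha_{n-1}\gamma_n) \le \max\{\mathbb E\log\alpha_0, 0\}$; but this crude bound is $0$, not negative, so it is not directly enough. Instead I would run the argument with $\beta_n \equiv \lambda$ for a constant $\lambda\in(1,e^{-\mathbb E\log\alpha_0})$ — still a legitimate stationary ergodic sequence with $\mathbb E|\log\lambda|<\infty$ — noting $\alpha_0\cdots\alpha_{n-1}\gamma_n = \lambda^{-(n-1)}\gamma_n\cdot\lambda^{n-1}\alpha_0\cdots\alpha_{n-1} \le \lambda \cdot \lambda^{-n}\big(\gamma_n\sum_{k=0}^n\alpha_0\cdots\alpha_{k-1}\lambda^{n-k}\big)$, actually cleaner: absorb $\lambda$ into $\alpha$, i.e. apply part (a)'s reasoning to $\alpha_n' = \lambda\alpha_n$ and $\beta_n' = \lambda^{-1}$... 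I would simply invoke Lemma \ref{LMDS5L2} with $\beta_n\equiv e^{\mathbb E\log\alpha_0/2}$ if $\mathbb E\log\alpha_0<0$... The tidiest route: take $\beta_n \equiv c_0$ where $0<c_0<e^{\mathbb E\log\alpha_0}$ is impossible since $c_0<1$; rather the inner sum with $\beta_k\cdots\beta_{n-1}=c_0^{n-k}$ still has its $k=n$ term equal to $\alpha_0\cdots\alpha_{n-1}$, so $\alpha_0\cdots\alpha_{n-1}\gamma_n \le \gamma_n\sum_{k=0}^n\alpha_0\cdots\alpha_{k-1}c_0^{n-k}$, and \eqref{LMDS5L2e1} bounds the $\limsup$ of $\frac1n\log$ of the right side by $\max\{\mathbb E\log\alpha_0,\log c_0\}<0$ provided $\log c_0<0$, which holds for any $c_0<1$. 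Then the geometric-tail argument of part (a) finishes it. The main obstacle — really the only subtlety — is this bookkeeping in (b): making sure one picks an auxiliary constant sequence for $\beta_n$ that both satisfies the lemma's integrability condition and keeps the resulting bound strictly negative, and verifying that the inner sum genuinely dominates the single product $\alpha_0\cdots\alpha_{n-1}\gamma_n$.
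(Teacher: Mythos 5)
Your argument is correct and is exactly the deduction the paper has in mind: the paper states the corollary follows from Lemma \ref{LMDS5L2} without further detail, and applying \eqref{LMDS5L2e1} to get a strictly negative $\limsup$ of $\frac1n\log(\cdot)$ and then invoking geometric summability is precisely that deduction. For part (b) you do land on the right device---take $\beta_n\equiv c_0$ for any constant $c_0\in(0,1)$ so that $\alpha_0\cdots\alpha_{n-1}\gamma_n$ is the $k=n$ term of the bracketed sum and $\max\{\mathbb{E}\log\alpha_0,\log c_0\}<0$---but the exposition should drop the false starts (the $\lambda>1$ attempt and the aborted $c_0<e^{\mathbb{E}\log\alpha_0}$ remark are red herrings; any $c_0\in(0,1)$ works, or one can bypass the auxiliary sequence entirely by combining the ergodic theorem for $\frac1n\log(\alpha_0\cdots\alpha_{n-1})$ with the standard fact that $\frac1n\log^+\gamma_n\to0$ a.s.\ when $\mathbb{E}\log^+\gamma_0<\infty$).
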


\medskip

 Now we can state the proof of Theorem \ref{Conver1.1}.
\begin{proof}[Proof of Theorem \ref{Conver1.1}]
As the function $W_n(z)$ is analytic on $\mathbb C^d$, using Cauchy's formula like  \cite{b92}, it suffices to  show that for each $t_0\in K$, there exists $D(t_0,\varepsilon)\in\mathbb C^d$ ($\varepsilon>0$ small enough) such that
\begin{equation}\label{LDMAS3pe3}
\sum_n \sup_{z\in D(t_0,\varepsilon)}\left(\mathbb E_\xi|W_{n+1}(z)-W_n(z)|^p\right)^{1/p}<\infty\quad a.s..
\end{equation}
for suitable $p\in(1,2]$.
Observe that
$$W_{n+1}(z)-W_n(z)=\sum_{u\in\mathbb T_n}\tilde {X}_u(z)(W_1(u,z)-1),$$
where under the quenched law $\mathbb P_\xi$, $\{W_{k}(u,z)\}_{|u|=n}$ are i.i.d. and independent of $\mathcal F_n$ with common distribution determined by $\mathbb P_\xi(W_{k}(u,z)\in \cdot)=\mathbb P_{T^n\xi}(W_k(z)\in\cdot)$ (the precise definition of $W_k(u,z)$ will be given by (\ref{LDMDWE}) later). The notation $T$ represents the shift operator: $T^n\xi=(\xi_n,\xi_{n+1},\cdots)$ if $\xi=(\xi_0,\xi_1,\cdots)$. Applying Burkholder's inequality  to $\{\tilde {X}_u(z)(W_1(u,z)-1)\}$, we obtain
\begin{equation}\label{LDMAS3pe3}
\mathbb E_\xi|W_{n+1}(z)-W_n(z)|^p\leq C \mathbb E_\xi\sum_{u\in\mathbb T_n}|\tilde {X}_u(z)|^p|W_1(u,z)-1|^p
\leq C \frac{P_n(px)}{|P_n(z)|^p}\mathbb E_{T^n\xi}|W_1(z)-1|^p,
\end{equation}
where $C>0$ is a constant independent of $\xi$ and $n$, and in general it does not necessarily stand for the same constant throughout.
Put
\begin{equation}\label{LDMAS3pt1ee}
\beta_n=\sup_{z\in D(t_0,\varepsilon)}\frac{m_n(px)^{1/p}}{|m_n(z)|}\quad\text{and}\quad\gamma_n=\sup_{z\in D(t_0,\varepsilon)} \left(\mathbb E_{T^n\xi}|W_1(z)-1|^p\right)^{1/p}.
\end{equation}
By Lemma \ref{LDMAl4}, there exist $\varepsilon_1>0$ and $p_1>1$ such that (\ref{LDMAl4e}) holds for all $ 0<\varepsilon\leq\varepsilon_1$ and $1<p\leq p_1$. Meanwhile, Lemma \ref{LDMAl5} implies that there exists $p_2>1$ such that (\ref{LDMAl5e}) holds for all $1<p\leq p_2$.
Take $p\leq\min\{p_1,p_2\}$ and fix this $p$. By (\ref{LDMAl5e}), there exists $\varepsilon_p>0$ such that for all $0<\varepsilon\leq \varepsilon_p$,
$$\mathbb E\log\beta_0=\frac{1}{p}\mathbb E\log\sup_{z\in  D(t_0,\varepsilon)} \frac{m_0(px)}{|m_0(z)|^p}<0.$$
Take $0<\varepsilon\leq \min\{\varepsilon_1,\varepsilon_p\}$. So we have $\mathbb E\log\beta_0<0$ and $\mathbb E\log^+\gamma_0<\infty$ (by (\ref{LDMAl4e})). Then it follows from (\ref{LDMAS3pe3}) and Corollary \ref{LPBRWL2.1}(b) that
$$\sum_n \sup_{z\in D(t_0,\varepsilon)}\left(\mathbb E_\xi|W_{n+1}(z)-W_n(z)|^p\right)^{1/p}
\leq \sum_n\beta_0\cdots\beta_{n-1}\gamma_n<\infty\quad a.s.,$$
which completes the proof.
\end{proof}

\begin{re}
Theorem \ref{Conver1.1} says that $W_n(z)$ converges to $W(z)$ uniformly a.s. and in $\mathbb P_\xi$-$L^p$ for some $p\in(1,2]$
on  set $K(\delta)$. Noticing (\ref{LDMAS3pe3}), we deduce that for every $z\in K(\delta)$,
\begin{eqnarray}\label{LDMDS3re}
(\mathbb E_\xi|W(z)|^p)^{1/p}
&\leq& \lim_{n\rightarrow\infty}\sum_{k=0}^{n-1}(\mathbb E_\xi|W_{k+1}(z)-W_k(z)|^p)^{1/p}+1\nonumber\\
&\leq&  C\sum_{k=0}^{\infty} \frac{P_k(px)^{1/p}}{|P_k(z)|}\left(\mathbb E_{T^k\xi}|W_1(z)-1|^p\right)^{1/p}+1.
\end{eqnarray}
\end{re}

Theorem \ref{Conver1.1} is principally  concerned with the uniform convergence of $W_n(z)$ near the real number. From similar arguments to the proof of Theorem \ref{Conver1.1}, we can obtain a general uniform convergence region for the complex martingale $W_n(z)$,  see  Theorem \ref{Conver1.1cc} below  whose proof is omitted. Set
$$I_p'=\{z\in\mathbb{C}^d:\Lambda(px)- p\mathbb E\log |m_0(z)|<0\},$$
$$\Omega_{1,p}'=\mbox{int}\{z\in\mathbb{C}^d:\mathbb{E}\log^+\mathbb{E}_\xi\tilde{Z}_1(x)^p<\infty\},$$
\begin{equation*}
\Omega_2'=\{z\in\mathbb{C}^d:\exists\delta_z>0\;\;\text{such that}\;\;
\mathbb{E}\log^-\alpha_0(z,\delta_z)<\infty\},
\end{equation*}
$$ \Omega_p'=I_p'\bigcap\Omega'_{1,p}\quad\text{and}\quad \Omega'=\left(\bigcup_{1<p\leq2}\Omega_p'\right)\bigcap\Omega_2'. $$

\begin{thm}\label{Conver1.1cc}Let $K$ be a compact subset of $\Omega'$. Then there exists constant  $p_K>1$ such that the complex martingale $W_n(z)$ converges uniformly to a limit random variable $W(z)$ almost surely (a.s.) and in $\mathbb P_\xi$-$L^p$ on $K$ for $p\in(1,p_K]$.
\end{thm}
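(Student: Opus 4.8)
The plan is to mimic the proof of Theorem \ref{Conver1.1} almost verbatim, replacing the single base point $t_0$ by a complex base point $z_0\in\Omega'$ and adjusting the exponent bookkeeping accordingly. First I would reduce, via Cauchy's formula on polydiscs exactly as in \cite{b92}, the uniform $L^p$ (and a.s.) convergence on a compact $K\subset\Omega'$ to the summability statement
\[
\sum_n \sup_{z\in D(z_0,\varepsilon)}\bigl(\mathbb E_\xi|W_{n+1}(z)-W_n(z)|^p\bigr)^{1/p}<\infty\quad a.s.
\]
for each $z_0\in K$ and a suitable small polydisc $D(z_0,\varepsilon)$. Covering $K$ by finitely many such polydiscs then gives the theorem with $p_K$ the minimum of the finitely many exponents produced; the constant $p_K>1$ comes out uniformly since $K$ is compact and contained in the open set $\Omega'=\bigl(\bigcup_{1<p\le2}\Omega_p'\bigr)\cap\Omega_2'$, so every point of $K$ lies in some $\Omega_p'\cap\Omega_2'$ with $p$ bounded away from $1$ on a neighbourhood.

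Next, just as in (\ref{LDMAS3pe3}), Burkholder's inequality applied to $\{\tilde X_u(z)(W_1(u,z)-1)\}_{|u|=n}$ under $\mathbb P_\xi$ yields
\[
\mathbb E_\xi|W_{n+1}(z)-W_n(z)|^p\le C\,\frac{P_n(px)}{|P_n(z)|^p}\,\mathbb E_{T^n\xi}|W_1(z)-1|^p,
\]
so with $\beta_n=\sup_{z\in D(z_0,\varepsilon)} m_n(px)^{1/p}/|m_n(z)|$ and $\gamma_n=\sup_{z\in D(z_0,\varepsilon)}(\mathbb E_{T^n\xi}|W_1(z)-1|^p)^{1/p}$ the tail is bounded by $\sum_n\beta_0\cdots\beta_{n-1}\gamma_n$, and Corollary \ref{LPBRWL2.1}(b) closes the argument once I check $\mathbb E\log\beta_0<0$ and $\mathbb E\log^+\gamma_0<\infty$. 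For the first condition I would prove the analogue of Lemma \ref{LDMAl5}: since $z_0\in I_p'$ means $\Lambda(px_0)-p\mathbb E\log|m_0(z_0)|<0$, the function $z\mapsto \mathbb E\bigl(\log m_0(px)-p\log|m_0(z)|\bigr)$ is strictly negative at $z_0$, and by continuity plus a domination bound (using $z_0\in\Omega_2'$ exactly as in Lemma \ref{LDMAl5}, with $\mathcal S_\delta$ the corners of a real polydisc around $x_0$) it stays negative on a small enough $D(z_0,\varepsilon)$; this gives $\mathbb E\log\beta_0=\frac1p\,\mathbb E\log\sup_{z\in D(z_0,\varepsilon)}\frac{m_0(px)}{|m_0(z)|^p}<0$. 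For $\mathbb E\log^+\gamma_0<\infty$ I would rerun Lemma \ref{LDMAl3} and Lemma \ref{LDMAl4} at the base point $z_0$: Lemma \ref{LDMAl3} bounds $W^*(z_0,\varepsilon)$ by $\alpha_0(z_0,\varepsilon)^{-1}\sum_{s\in\mathcal S}\tilde Z_1(s)$ with $\mathcal S$ the corners $x_0^j\pm\varepsilon$, and since $z_0\in\Omega_{1,p}'\cap\Omega_2'$ we can shrink $\varepsilon$ so those corners lie in $\{\mathbb E\log^+\mathbb E_\xi\tilde Z_1(x)^p<\infty\}$ and $\mathbb E\log^-\alpha_0(z_0,\delta)<\infty$; Hölder and Minkowski then give $\mathbb E\log^+\sup_{z\in D(z_0,\varepsilon)}(\mathbb E_\xi|W_1(z)|^p)^{1/p}<\infty$, whence $\mathbb E\log^+\gamma_0<\infty$ because $|W_1(z)-1|\le|W_1(z)|+1$ and $\gamma_0$ is built from $\mathbb E_{T^0\xi}=\mathbb E_\xi$.

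The main obstacle is purely bookkeeping rather than conceptual: one must keep the two roles of the real part $x$ straight — it appears both through $m_0(px)$ (needing $px$ to stay in the good region, which constrains $p$ close to $1$ as in the $|px^i-t_0^i|$ estimate of Lemma \ref{LDMAl5}) and through $|m_0(z)|$ from below (needing $z$ in the $\Omega_2'$-neighbourhood) — and one must verify that the intersection $\bigl(\bigcup_{1<p\le2}\Omega_p'\bigr)\cap\Omega_2'$ is exactly the set on which all these constraints can be met simultaneously with a single $p\in(1,2]$. Because $\Omega_p'$ is defined to be \emph{open} and the union is over $p\le2$, the compact $K\subset\Omega'$ is covered by finitely many polydiscs each sitting inside some $\Omega_p'\cap\Omega_2'$, so a uniform $p_K>1$ and $\delta>0$ exist; I would simply note this and remark that the rest of the argument is identical to the proof of Theorem \ref{Conver1.1} above, which is presumably why the authors state that its proof is omitted.
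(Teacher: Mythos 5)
Your proposal is correct and is exactly the argument the paper has in mind: Theorem~\ref{Conver1.1cc} is stated with proof omitted, the authors remarking only that it follows by ``similar arguments to the proof of Theorem~\ref{Conver1.1}'', and your outline reproduces that argument faithfully (Cauchy/Burkholder reduction to $\sum_n\sup_{D(z_0,\varepsilon)}(\mathbb E_\xi|W_{n+1}-W_n|^p)^{1/p}<\infty$, then $\mathbb E\log\beta_0<0$ from $z_0\in I_p'$, $\mathbb E\log^+\gamma_0<\infty$ from $z_0\in\Omega_{1,p}'\cap\Omega_2'$ via the corner bound of Lemma~\ref{LDMAl3}, closed by Corollary~\ref{LPBRWL2.1}(b), with a finite cover of the compact $K\subset\Omega'$ to extract a uniform $p_K>1$). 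One small simplification worth noting: since $z_0\in I_p'$ already supplies $\mathbb E\bigl(\log m_0(px_0)-p\log|m_0(z_0)|\bigr)<0$ for the fixed $p$, no differentiation in $p$ at $p=1$ is required, and the $\varepsilon\downarrow0$ step can be run with only a one-sided domination of $f^+$ (via $\Omega_2'$ for $\log^-|m_0(z)|$ and $\Omega_\Lambda=\mathbb R^d$ for $\log^+m_0(px)$), so the ``$px$ must remain in the good region, forcing $p$ close to $1$'' constraint you flag from Lemma~\ref{LDMAl5} is in fact not needed in this complex version.
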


 As the complication of  Theorem \ref{Conver1.1}, Theorem \ref{Conver1.1cc} generalizes the result of Biggins (\cite{b92}, Theorem 2) for classical branching random walks. It can be seen that $\Omega$ is actually the intersection of $\Omega'$ with the real space, i.e. $\Omega=\Omega'\bigcap\mathbb R^d$, which means that Theorem \ref{Conver1.1} is in fact contained in  Theorem \ref{Conver1.1cc}.

\section{Multifractal analysis and  proof of Theorem \ref{Conver1.12}}\label{LDMDS3}
The proof of Theorem \ref{Conver1.12} is based on   multifractal analysis.
For the Galton-Watson tree $\mathbb T$, the boundary of $\mathbb T$ is defined as
$$\partial \mathbb T=\{u\in \mathbb I: u|n\in \mathbb T \text{ for all }n\in \mathbb N\}.$$
As a subset of $\mathbb I={\mathbb N^*}^{\mathbb N^*}$, $\partial \mathbb T$ is a metrical and compact topological space with
$$[u]=\{w\in\partial \mathbb T: w||u|=u\},\qquad u\in\mathbb T$$
its topological basis, and with the standard ultrametric distance
$$d(u,v)=e^{-\sup{\{|w|:\; u,v\in [w]\}}}.$$
Confirming the Hausdorff dimensions of  the level sets
\begin{equation}
E(\alpha)=\{u\in\partial\mathbb T:\lim_{n\rightarrow\infty}\frac{S_{u|n}}{n}=\alpha\}  \quad( \alpha\in\mathbb R^d)
\end{equation}
 is in the frame work of multifractal analysis.
As usual, the $s$-dimensional Hausdorff measure  of a set $E$ is defined as
$$H^s(E)=\liminf_{\delta\rightarrow0^+}\left\{\sum_{i=1}^\infty (\text{diam}(U_i))^s: E\subset \bigcup_{i=1}^\infty U_i, \text{diam}(U_i)\leq\delta\right\},$$
where the sequence of sets $\{U_i\}$ is a coverage of $E$ in $\partial \mathbb T$ and  $\text{diam}(U_i)$ represents the diameter of $U_i$. Then the  Hausdorff dimensions of $E$ is defined as
$$\dim E=\sup\{s>0: H^s(E)=\infty\}=\inf\{s>0: H^s(E)=0\}.$$
Moreover, we  define  the pressure-like function
\begin{equation}\tilde{\Lambda}(t)=\underset{n\rightarrow\infty}\limsup\frac{1}{n}\log \tilde{Z}_n(t)\quad(t\in\mathbb{R}^d),
\end{equation}
whose   Legendre transform is denoted by
$\tilde\Lambda^*(\alpha)=\underset{t\in\mathbb{R}^d}\sup\left\{\langle t,\alpha\rangle-\tilde\Lambda(t)\right\}$.
For the level sets $E(\alpha)$, we say that the multifractal formalism holds at $\alpha$ if $\dim E(\alpha)=-\tilde\Lambda^*(\alpha)$ (see e.g. \cite{bg92, ol, pj04} for introductions of  multifractal formalisms). For our model, we find the region where the multifractal formalism holds.

\begin{thm}[Multifractal analysis]\label{Conver1.12ma}
 Let $ \mathcal{J}=\{\nabla\Lambda(t)\in\mathbb{R}^d:t\in\Omega\}$. Then with probability 1, for all $\alpha\in\mathcal{J}$, we have $ \dim E(\alpha)=-\tilde\Lambda^*(\alpha)=-\Lambda^*(\alpha)$.
\end{thm}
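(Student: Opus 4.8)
The plan is to establish Theorem \ref{Conver1.12ma} by proving the two matching bounds $\dim E(\alpha)\le -\Lambda^*(\alpha)$ and $\dim E(\alpha)\ge -\Lambda^*(\alpha)$ for $\alpha=\nabla\Lambda(t)$ with $t\in\Omega$, together with the easy identity $-\Lambda^*(\alpha)=\langle t,\alpha\rangle-\Lambda(t)$ recorded in the introduction; the equality $-\tilde\Lambda^*(\alpha)=-\Lambda^*(\alpha)$ then follows once we know $\tilde\Lambda(t)=\Lambda(t)$ on the relevant range, which is itself a consequence of the uniform martingale convergence (Theorem \ref{Conver1.1}): on a compact $K\subset\Omega$ one has $W_n(t)\to W(t)$ uniformly with $W(t)>0$ a.s. on the survival set, so $\frac1n\log\tilde Z_n(t)=\frac1n\log P_n(t)+\frac1n\log W_n(t)\to\Lambda(t)$ by the ergodic theorem applied to $\frac1n\log P_n(t)=\frac1n\sum_{i=0}^{n-1}\log m_i(t)$.

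First I would prove the upper bound. Fix $\alpha=\nabla\Lambda(t)$, $t\in\Omega$, and a small $\varepsilon>0$. For $u\in\partial\mathbb T$ with $\frac{S_{u|n}}{n}$ close to $\alpha$, the cylinder $[u|n]$ has diameter $e^{-n}$, and one estimates the number of $u\in\mathbb T_n$ with $S_u/n\in B(\alpha,\varepsilon)$ by a Chebyshev/Markov argument: $\#\{u\in\mathbb T_n: S_u\in nB(\alpha,\varepsilon)\}\le e^{-n\langle t,\alpha\rangle+n\|t\|\varepsilon}\tilde Z_n(t)$ when $t$ is chosen adapted to the direction (more precisely one must use a finite cover of the sphere by directions $t$ so that $\langle t,\alpha\rangle$ dominates uniformly — this is exactly the multidimensional complication Attia \cite{na} handles). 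Combining with $\frac1n\log\tilde Z_n(t)\to\Lambda(t)$ gives a natural covering of $E(\alpha)$ by cylinders of generation $n$ whose $s$-dimensional Hausdorff sum is controlled by $e^{n(-\langle t,\alpha\rangle+\Lambda(t)+s+o(1))}$, which tends to $0$ for $s>\langle t,\alpha\rangle-\Lambda(t)=-\Lambda^*(\alpha)$ (after letting $\varepsilon\downarrow0$), whence $\dim E(\alpha)\le -\Lambda^*(\alpha)$.

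For the lower bound I would construct, for each fixed $\alpha=\nabla\Lambda(t)$, an auxiliary measure (a Mandelbrot-type / Peyrière measure) $\mu_t$ supported on $\partial\mathbb T$ by tilting with the weights $\tilde X_u(t)=e^{\langle t,S_u\rangle}/P_{|u|}(t)$ and using the limit $W(t)$; concretely $\mu_t([u])=\tilde X_u(t)\,W(u,t)/W(t)$ in the usual branching way, which is well-defined and positive precisely because Theorem \ref{Conver1.1} gives $W_n\to W$ uniformly near $t$ and hence differentiability of $W(\cdot)$ in a neighbourhood, allowing the standard Biggins-type argument that $\mu_t$ is carried by $E(\nabla\Lambda(t))$ (a law-of-large-numbers statement for $S_{u|n}/n$ under $\mu_t$, obtained by differentiating $\frac1n\log\tilde Z_n$ in $t$, i.e. using the uniform convergence to pass the derivative through the limit). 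Then a Billingsley/Frostman mass-distribution argument gives $\dim E(\alpha)\ge \liminf_n \frac{-\log\mu_t([u|n])}{n}$ for $\mu_t$-a.e. $u$, and one computes this local dimension to be $\langle t,\alpha\rangle-\Lambda(t)=-\Lambda^*(\alpha)$ using again $\frac1n\log P_n(t)\to\Lambda(t)$ and $\frac1n\log W_n(t)\to0$.

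The main obstacle will be the lower bound, and within it the handling of the stationary ergodic (not i.i.d.) environment: one needs the tilted measure $\mu_t$ to genuinely live on $E(\nabla\Lambda(t))$, which requires differentiating the normalization $\frac1n\log P_n(t)=\frac1n\sum_{i<n}\log m_i(t)$ and the martingale limit $W_n(t)$ uniformly in $t$ and then invoking the ergodic theorem for the array $\nabla\log m_i(t)$ to identify the drift as $\nabla\Lambda(t)=\alpha$ — this is where Theorem \ref{Conver1.1} (uniform $L^p$ convergence on a complex neighbourhood $K(\delta)$, giving analyticity of $W(z)$ and interchange of $\nabla$ and $\lim$) and the Corollary \ref{LPBRWL2.1}-type summability estimates are used in an essential way, and where the i.i.d. proofs of Attia \cite{na} must be reworked. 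A secondary but genuine difficulty is making the upper-bound covering argument uniform over all directions $\alpha$ simultaneously so as to obtain the "with probability 1, for all $\alpha\in\mathcal J$" statement rather than an a.s. statement for each fixed $\alpha$; this is handled by a countable-dense-set argument in $t$ combined with continuity of $t\mapsto\nabla\Lambda(t)$ and $t\mapsto\Lambda^*(\nabla\Lambda(t))$ and monotonicity/continuity of the bounds in $\varepsilon$.
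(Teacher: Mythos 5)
Your proposal follows essentially the same route as the paper: upper bound via the Markov/covering argument encoded in Attia's Proposition 2.2 together with the a.s.\ inequality $\tilde\Lambda\le\Lambda$ (the paper's Proposition \ref{LDMDS3l1}), and lower bound via the Mandelbrot measure $\mu_t$ built from the weights $\tilde X_u(t)$ and the limits $W(u,t)$, the uniform martingale convergence of Theorem \ref{Conver1.1} on a complex neighbourhood, the mass-distribution principle, and a law of large numbers $S_{u|n}/n\to\nabla\Lambda(t)$ for $\mu_t$-a.e.\ $u$; the paper realizes the last step through the free energy $L_n(t,\lambda)$ of the tilted measures $\nu_{t,n}$ and Proposition \ref{LDMDS3p3}, which is exactly the ``differentiate through the uniform limit'' mechanism you sketch.

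Two small points where you diverge from (but do not contradict) the paper's writeup. First, you propose to prove the equality $\tilde\Lambda(t)=\Lambda(t)$ via positivity of $W(t)$; the paper instead only proves the one-sided inequality $\tilde\Lambda\le\Lambda$ and then gets all three equalities in the theorem by sandwiching, $\,-\Lambda^*(\alpha)\le\dim E(\alpha)\le-\tilde\Lambda^*(\alpha)\le-\Lambda^*(\alpha)$, which avoids having to discuss non-degeneracy of $W(t)$ on $\Omega$. Second, your upper-bound sketch mentions a ``finite cover of the sphere by directions $t$''; for a fixed $\alpha=\nabla\Lambda(t)$ one in fact uses the single dual direction $t$ and the Markov bound $\#\{u\in\mathbb T_n:S_u\in nB(\alpha,\varepsilon)\}\le e^{-n\langle t,\alpha\rangle+n\|t\|\varepsilon}\tilde Z_n(t)$; the uniformity in $\alpha$ is then obtained not by covering the sphere but by noting that the a.s.\ bound $\tilde\Lambda\le\Lambda$ holds simultaneously for all $t$ by convexity/continuity (which is precisely what Proposition \ref{LDMDS3l1} together with Attia's Proposition 2.2 delivers). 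Also, a sign typo: the local dimension of $\mu_t$ should be $\Lambda(t)-\langle t,\alpha\rangle=-\Lambda^*(\alpha)$, not $\langle t,\alpha\rangle-\Lambda(t)$. None of these affect the correctness of the approach.
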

Theorem \ref{Conver1.12ma} means that the multifractal formalism holds at $\alpha\in\mathcal J$, which is an extension of the result of Attia \cite{na} for a branching random walk in a deterministic environment.  Under an additional condition for  $L_1$,  the  region  $\mathcal J$ can be enlarged to $\mathcal J\bigcup(\mbox{int}\tilde{\mathcal J})$.

\begin{co}\label{Conver1.12cma} Let $ \tilde{\mathcal J}=\{\alpha\in\mathbb{R}^d:\Lambda^*(\alpha)<0\}$. If $\mathbb E[\mathbb P_\xi (\|L_1\|\leq a)^{-1}]<\infty$ for some constant $a>0$, then with probability 1, for all $\alpha\in\mbox{\emph{int}}\tilde{\mathcal J}$, we have $ \dim E(\alpha)=-\tilde\Lambda^*(\alpha)=-\Lambda^*(\alpha)$.%
\end{co}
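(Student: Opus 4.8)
The plan is to bootstrap from Theorem \ref{Conver1.12ma} by showing that the extra moment hypothesis $\mathbb E[\mathbb P_\xi(\|L_1\|\le a)^{-1}]<\infty$ forces $\mathcal J$ to actually exhaust $\mbox{int}\tilde{\mathcal J}$, so that no enlargement argument is needed beyond identifying the two sets. Concretely, I would first record that $\mathcal J\subset\tilde{\mathcal J}$ always holds (as already noted in Remark \ref{Conver1.12c1}): if $\alpha=\nabla\Lambda(t)$ with $t\in\Omega\subset I$, then $\Lambda^*(\alpha)=\langle t,\nabla\Lambda(t)\rangle-\Lambda(t)<0$ by the definition of $I$, so $\alpha\in\tilde{\mathcal J}$. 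The substance is the reverse-type inclusion $\mbox{int}\tilde{\mathcal J}\subset\mathcal J$ under the added hypothesis, i.e. that for every $\alpha$ with $\Lambda^*(\alpha)<0$ there is $t\in\Omega$ with $\nabla\Lambda(t)=\alpha$. Since $\Lambda$ is assumed finite, convex and differentiable on all of $\mathbb R^d$ with $\Lambda^*(\alpha)<0$, standard convex-duality gives a $t$ with $\nabla\Lambda(t)=\alpha$ and $\langle t,\alpha\rangle-\Lambda(t)=\Lambda^*(\alpha)<0$, hence $t\in I$; what remains is to check $t\in\Omega_1\cap\Omega_2$, and this is exactly where the hypothesis on $\|L_1\|$ enters.

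For the membership $t\in\Omega_2$: the condition $\mathbb E[\mathbb P_\xi(\|L_1\|\le a)^{-1}]<\infty$ should be used to bound $\alpha_0(t,\delta_t)=\inf_{\zeta\in D(t,\delta_t)}|m_0(\zeta)|$ from below. Writing $m_0(\zeta)=\mathbb E_\xi\sum_i e^{\langle\zeta,L_i\rangle}$ and restricting to the event $\{\|L_1\|\le a\}$, one gets, for $\zeta$ in a small complex neighbourhood of $t$, a lower bound of the form $|m_0(\zeta)|\ge c(t)\,\mathbb P_\xi(\|L_1\|\le a)$ for a deterministic constant $c(t)>0$ (using $N\ge 1$ so at least the first child always exists, and controlling the oscillation of $e^{\langle\zeta,L_1\rangle}$ on $\{\|L_1\|\le a\}$ when $\delta_t$ is small relative to $a$). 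Then $\log^-\alpha_0(t,\delta_t)\le \log^+(1/c(t))+\log^-\mathbb P_\xi(\|L_1\|\le a)\le \log^+(1/c(t))+\mathbb P_\xi(\|L_1\|\le a)^{-1}$, whose expectation is finite by hypothesis, giving $t\in\Omega_2$. For $t\in\Omega_1$: since $\Lambda(s)=\mathbb E\log m_0(s)$ is finite near $t$ by the blanket assumption $\Omega_\Lambda=\mathbb R^d$, one needs the stronger moment $\mathbb E\log^+\mathbb E_\xi\tilde Z_1(s)^p<\infty$ for some $p>1$ and all $s$ near $t$; I would obtain this by the usual interpolation/convexity trick — $\tilde Z_1(s)^p$ is dominated by a product involving $\tilde Z_1$ at nearby real points $s^\pm$, and since $\Lambda$ is finite (indeed real-analytic-like, being differentiable everywhere) the relevant $\log^+$-moments are finite, so $t\in\Omega_1$. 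This puts $t\in\Omega=I\cap\Omega_1\cap\Omega_2$, hence $\alpha=\nabla\Lambda(t)\in\mathcal J$.

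Once $\mbox{int}\tilde{\mathcal J}\subset\mathcal J$ is established, the conclusion is immediate: for $\alpha\in\mbox{int}\tilde{\mathcal J}$ we have $\alpha\in\mathcal J$, so Theorem \ref{Conver1.12ma} gives, with probability $1$ and simultaneously for all such $\alpha$, the identity $\dim E(\alpha)=-\tilde\Lambda^*(\alpha)=-\Lambda^*(\alpha)$. I expect the main obstacle to be the quantitative lower bound $|m_0(\zeta)|\gtrsim \mathbb P_\xi(\|L_1\|\le a)$ uniformly for $\zeta\in D(t,\delta_t)$: one must choose $\delta_t$ small enough (depending on $a$ and on $\|t\|$) that the phase $\mathrm{Im}\langle\zeta,L_1\rangle$ stays in, say, $(-\pi/3,\pi/3)$ on $\{\|L_1\|\le a\}$ so that there is no cancellation, while also keeping $\mathrm{Re}\langle\zeta,L_i\rangle$ under control; handling the contribution of children with $\|L_i\|>a$ (which could be large and negative in real part, but cannot make $|m_0(\zeta)|$ small since we only need a lower bound from the truncated part) requires a little care but is essentially a triangle-inequality estimate. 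The $\Omega_1$ verification, by contrast, is routine given $\Omega_\Lambda=\mathbb R^d$.
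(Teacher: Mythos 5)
Your reduction to the inclusion $\operatorname{int}\tilde{\mathcal J}\subset\mathcal J$ is a natural idea, and the convex-duality step (existence of $t\in I$ with $\nabla\Lambda(t)=\alpha$) is fine, but the two verifications $t\in\Omega_2$ and $t\in\Omega_1$ that you call "a little care" and "routine" are exactly where the proof breaks, and the hypothesis $\mathbb E[\mathbb P_\xi(\|L_1\|\le a)^{-1}]<\infty$ alone does not save either one.

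For $\Omega_2$: you want $|m_0(\zeta)|\gtrsim\mathbb P_\xi(\|L_1\|\le a)$ uniformly for $\zeta$ in a complex neighbourhood of $t$, obtained from the truncated part plus "essentially a triangle-inequality estimate" for the children with $\|L_i\|>a$. But the triangle inequality goes the wrong way. Writing $\zeta=x+\mathbf iy$,
\[
m_0(\zeta)=\underbrace{\mathbb E_\xi\sum_i e^{\langle x,L_i\rangle}e^{\mathbf i\langle y,L_i\rangle}\mathbf 1_{\{\|L_i\|\le a\}}}_{\text{controlled phase}}\;+\;\underbrace{\mathbb E_\xi\sum_i e^{\langle x,L_i\rangle}e^{\mathbf i\langle y,L_i\rangle}\mathbf 1_{\{\|L_i\|>a\}}}_{\text{uncontrolled phase}},
\]
and the second sum is a complex number whose modulus can be much larger than that of the first, with an arbitrary argument; it can cancel the first term partially or entirely, driving $|m_0(\zeta)|$ to $0$ for $\zeta$ arbitrarily close to $t$. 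Shrinking $\delta_t$ tames $\langle y,L_i\rangle$ only on $\{\|L_i\|\le a\}$; for the unbounded displacements the phase is never controlled, no matter how small $\delta_t$ is. So the estimate $\log^-\alpha_0(t,\delta_t)\le\log^+(1/c(t))+\log^-\mathbb P_\xi(\|L_1\|\le a)$ is not available, and membership in $\Omega_2$ is not established. For $\Omega_1$: $\Lambda(t)=\mathbb E\log m_0(t)=\mathbb E\log\mathbb E_\xi\tilde Z_1(t)$ being finite gives control of the \emph{first} quenched moment of $\tilde Z_1(t)$, but $\Omega_1$ asks for $\mathbb E\log^+\mathbb E_\xi\tilde Z_1(t)^p<\infty$ for some $p>1$ near $t$, i.e.\ a higher quenched moment; neither $\Omega_\Lambda=\mathbb R^d$ nor the added hypothesis on $\|L_1\|$ implies this when $N$ or the displacements are heavy-tailed conditionally on $\xi$. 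So in general $t\notin\Omega_1\cap\Omega_2$, and $\operatorname{int}\tilde{\mathcal J}\subset\mathcal J$ can fail.

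The paper avoids both obstacles by truncating: it replaces $(N,L_i)$ by $(N\wedge a,\,L_i\mathbf 1_{\{\|L_i\|\le a\}})$, yielding a new BRWRE for which $\tilde Z_1^a(t)\le ae^{a\|t\|}$ deterministically (so $\Omega_1^a=\mathbb R^d$) and for which every surviving displacement is bounded by $a$, so the phases $\langle y,L_i\rangle$ are uniformly small on a neighbourhood $D(t,\delta)$ with $a\sqrt d\,\delta$ small, giving $|m_0^a(z)|\gtrsim e^{-C\|t\|}\mathbb P_\xi(\|L_1\|\le a_0)$ and hence $\Omega_2^a=\mathbb R^d$ — this is precisely where $\mathbb E[\mathbb P_\xi(\|L_1\|\le a_0)^{-1}]<\infty$ is used (Lemma~\ref{LDMDS4L3}). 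One then applies Theorem~\ref{Conver1.12ma} to each truncated process to get $\dim E(\alpha)\ge\dim E_a(\alpha)=-\Lambda_a^*(\alpha)$ for $\alpha\in\mathcal J_a$, uses differentiability of $\Lambda_a$ and $I_a=\Omega_a$ to see $\mathcal J_a$ eventually contains $\alpha$, and concludes by the monotone approximation $\Lambda_a^*(\alpha)\downarrow\Lambda^*(\alpha)$ (Lemma~\ref{LDMDS4L2}). This detour through truncation is not a cosmetic choice; it is what makes the extra hypothesis actually do its work.
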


It is worth mentioning that  for a branching random walk in a deterministic environment, Attia and Barral \cite{na2014} showed that the multifractal formalism holds for all $\alpha\in\{\alpha\in\mathbb{R}^d:\Lambda^*(\alpha)\leq0\}$ (namely, the boundary problem was solved) through tedious approximation and computation. Here, in order to  highlight the treatment of random environments and  avoid a great length of the paper, we shall not discuss such a method  for our model.

\medskip
Using Theorem \ref{Conver1.12ma} and Corollary   \ref{Conver1.12cma}, we can achieve the proof of  Theorem \ref{Conver1.12}.

\begin{proof}[Proof of Theorem \ref{Conver1.12}]
By the general upper bounds in the theory of large deviations (see (\cite{z}, Theorem 4.5.3)) and Proposition \ref{LDMDS3l1},  we have  with probability $1$,
for all measurable sets $A\subset\mathbb R^d$,
\begin{equation}\label{LDMDP3}
\limsup_{n\rightarrow\infty}\frac{1}{n}\log Z_n(nA)\leq
-\inf_{\alpha\in  \bar A}\tilde\Lambda^*(\alpha)\leq-\inf_{\alpha\in  \bar A}\Lambda^*(\alpha).
\end{equation}
For the lower bounds, denote
 $\underline{Z}=\liminf_{n}\frac{1}{n}\log Z_n(nA)$.  Since $\Lambda^*(\alpha)\geq -\Lambda(0)=-\mathbb E\log m_0(0)$ by the definition of $\Lambda^*$,
 from (\ref{LDMDP3}) we can see that $$\underline{Z}\leq\limsup_{n\rightarrow\infty}\frac{1}{n}\log Z_n(nA)\leq\mathbb E\log m_0(0)<\infty \qquad a.s.. $$
 For $ \alpha\in\mbox{int}   A$, take $\varepsilon>0$ small enough such that $B(\alpha,\varepsilon)\subset A$.
For $u\in E(\alpha)$,   we have $\|\frac{S_{u|n}}{n}-\alpha\|\leq\varepsilon$ for $n$ large enough,
which means that $E(\alpha)\subset\underset{u\in E_n}\bigcup[u]$, where $E_n=\{u\in\mathbb T_n: S_u\in n B(\alpha,\varepsilon)\}$. Obviously, the  diameter of the set $[u]$ for $u\in E_n$ is less than $e^{-n}$, and so is less than $\delta>0$ for $n$ large enough. Therefore, the $s$-dimension Hausdorff measure of $E(\alpha)$ satisfies
$$\mathcal{H}^s(E(\alpha))
\leq \liminf_{n\rightarrow\infty}e^{-ns}Z_n(nB(\alpha,\varepsilon))\leq \liminf_{n\rightarrow\infty}e^{-ns}Z_n(nA)=0$$
provided $s>\underline{Z}$, which implies that $\dim E(\alpha)\leq s$ and furthermore $\dim E(\alpha)\leq\underline{Z}$ by the arbitrary of $s$. Taking superior on $\alpha$, we get  $\sup\limits_{\alpha\in\mbox{int}   A}\dim E(\alpha)\leq \underline{Z}$. Combing this result with  Theorem \ref{Conver1.12ma} (resp. Corollary   \ref{Conver1.12cma}) yields
$-\inf\limits_{\alpha\in\mbox{int}   A}\Lambda^*(\alpha)=\sup\limits_{\alpha\in\mbox{int}   A}\dim E(\alpha)\leq \underline{Z}$ if $\mbox{int} A\subset \mathcal J$ (resp. $\mbox{int} A\subset\mbox{int}  \tilde{\mathcal J}$).
\end{proof}

\section{Proof of Theorem \ref{Conver1.12ma}}\label{LDMDS31}
We shall prove  Theorem \ref{Conver1.12ma} through the natural approach of finding the upper and lower bounds for the Hausdorff dimensions of the level sets $E(\alpha)$,   according to  the ideas showed in Attia \cite{na}. Firstly, the upper bounds  can be deduced from the two propositions below.

\begin{pr}[\cite{na}, Proposition 2.2]\label{LDMDSSpp2}
With probability $1$, for all $\alpha\in\mathbb{R}^d$, $\dim E(\alpha)\leq-\tilde{\Lambda}^*(\alpha)$, where $\dim E(\alpha)<0$ means that $E(\alpha)$ is empty.
\end{pr}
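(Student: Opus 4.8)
The plan is to establish the upper bound $\dim E(\alpha)\le -\tilde\Lambda^*(\alpha)$ by a standard covering argument, but one has to be slightly careful because $\tilde\Lambda$ is only a $\limsup$ and $E(\alpha)$ involves Cesàro averages of the positions $S_{u|n}$. First I would fix $\alpha\in\mathbb R^d$ and suppose that $E(\alpha)\neq\emptyset$ (otherwise there is nothing to prove, with the convention $\dim\emptyset<0$). Then I would fix $t\in\mathbb R^d$ and observe that for $u\in E(\alpha)$ we have $\langle t,S_{u|n}\rangle / n\to\langle t,\alpha\rangle$, so that for every $\varepsilon>0$ and all large $n$, $e^{\langle t,S_{u|n}\rangle}\le e^{n(\langle t,\alpha\rangle+\varepsilon)}$. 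Hence for $n$ large the family $\{[u|n]:u\in E(\alpha)\}$ is a cover of $E(\alpha)$ by sets of diameter $e^{-n}$, and
\begin{equation*}
\sum_{u\in E(\alpha)}(\mathrm{diam}[u|n])^s\le \sum_{u\in\mathbb T_n}e^{-ns}\mathbf 1_{\{S_u/n\ \text{near}\ \alpha\}}\le e^{-ns}e^{-n(\langle t,\alpha\rangle+\varepsilon)}\sum_{u\in\mathbb T_n}e^{\langle t,S_u\rangle}=e^{-ns}e^{-n(\langle t,\alpha\rangle+\varepsilon)}\tilde Z_n(t).
\end{equation*}

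Taking $\liminf_{n\to\infty}$ and using the definition $\tilde\Lambda(t)=\limsup_n\frac1n\log\tilde Z_n(t)$, the right-hand side tends to $0$ along a subsequence as soon as $s>\tilde\Lambda(t)-\langle t,\alpha\rangle+\varepsilon$; more carefully, one should run the covering with the indices $n$ chosen along a subsequence realizing the $\limsup$ defining $\tilde\Lambda(t)$, so that $\frac1n\log\tilde Z_n(t)\to\tilde\Lambda(t)$. This yields $H^s(E(\alpha))=0$ whenever $s>\tilde\Lambda(t)-\langle t,\alpha\rangle+\varepsilon$, hence $\dim E(\alpha)\le \tilde\Lambda(t)-\langle t,\alpha\rangle+\varepsilon$. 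Letting $\varepsilon\downarrow 0$ and then optimizing over $t\in\mathbb R^d$ gives $\dim E(\alpha)\le \inf_{t}\big(\tilde\Lambda(t)-\langle t,\alpha\rangle\big)=-\sup_t\big(\langle t,\alpha\rangle-\tilde\Lambda(t)\big)=-\tilde\Lambda^*(\alpha)$.

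The one genuinely delicate point — and the place where the ``with probability $1$, for all $\alpha$ simultaneously'' phrasing matters — is the interchange of the almost-sure null set with the uncountable range of $\alpha$ and of $t$. I would handle this by first proving the bound for each fixed rational $t$ on a single almost sure event (countably many $t$), using that $\tilde Z_n(t)$ is a.s. finite and that $\tilde\Lambda$ is a.s. finite-valued (which follows from the standing assumptions, e.g. via the martingale convergence results of Section \ref{LDMDS2} and the hypothesis $\Omega_\Lambda=\mathbb R^d$); then use convexity/continuity of $t\mapsto\tilde\Lambda(t)$ and of $\alpha\mapsto\tilde\Lambda^*(\alpha)$ to pass from rational $t$ to all $t$ and to conclude the inequality for all $\alpha$ on that same event. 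Since this is precisely Proposition 2.2 of Attia \cite{na}, whose argument is insensitive to whether the environment is deterministic or random (the covering estimate above uses only the pathwise definitions of $\tilde Z_n$ and $E(\alpha)$), I would simply adapt that proof verbatim, the only change being to read all quantities as random variables and to quote the a.s. finiteness of $\tilde\Lambda(t)$ established earlier. The main obstacle is therefore not analytic depth but bookkeeping: ensuring a single null set works uniformly in $\alpha$, which the convexity argument resolves cleanly.
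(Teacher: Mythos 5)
The paper does not give a proof of this proposition: it is cited verbatim from Attia \cite{na} (Proposition 2.2) and used as a known pathwise covering fact. Your reconstruction is essentially Attia's argument and is the right route, but there are two points worth fixing.

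First, a sign error. You observe, correctly, that for $u\in E(\alpha)$ and large $n$ one has $e^{\langle t,S_{u|n}\rangle}\le e^{n(\langle t,\alpha\rangle+\varepsilon)}$, but this is the \emph{wrong} direction: to dominate the indicator $\mathbf 1_{\{S_u/n\ \text{near}\ \alpha\}}$ by $e^{\langle t,S_u\rangle}$ you need a \emph{lower} bound on $e^{\langle t,S_u\rangle}$. On the event $\|S_u/n-\alpha\|\le\varepsilon'$ one has $\langle t,S_u\rangle\ge n\langle t,\alpha\rangle-n\|t\|\varepsilon'$, so the correct estimate is
$\mathbf 1_{\{\|S_u/n-\alpha\|\le\varepsilon'\}}\le e^{-n(\langle t,\alpha\rangle-\|t\|\varepsilon')}e^{\langle t,S_u\rangle}$,
with a $-$ sign where you wrote $+$. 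This then gives $H^s(E(\alpha))=0$ whenever $s>\tilde\Lambda(t)-\langle t,\alpha\rangle+\|t\|\varepsilon'$; letting $\varepsilon'\downarrow 0$ and taking the infimum over $t$ gives $\dim E(\alpha)\le -\tilde\Lambda^*(\alpha)$, so the final answer you stated is right even though the displayed chain of inequalities is broken. Related to this, the cover should be taken more carefully: the threshold $N_u$ beyond which $\|S_{u|n}/n-\alpha\|\le\varepsilon'$ depends on $u$, so one should decompose $E(\alpha)=\bigcup_N F_N$ with $F_N=\{u\in E(\alpha):\|S_{u|n}/n-\alpha\|\le\varepsilon'\ \forall n\ge N\}$, bound $\dim F_N$ by the above and use countable stability of Hausdorff dimension.

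Second, the null-set discussion is unnecessary. The whole argument is a deterministic, pathwise covering estimate: once the realization (hence the tree, the positions $S_u$, and the random function $\tilde\Lambda$) is fixed, the inequality holds simultaneously for all $\alpha\in\mathbb R^d$ and all $t\in\mathbb R^d$, with no interchange of ``a.s.'' with an uncountable family. In particular, there is no need to first argue for rational $t$ and then invoke convexity; the optimization over $t$ is a deterministic step performed for each realization. The ``with probability $1$'' in the statement is vacuous under the paper's standing assumption $N\ge 1$ (the tree never dies out and $\tilde Z_n(t)\in(0,\infty)$ pathwise).

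In short: same approach as the reference the paper relies on, correct conclusion, but the middle inequality as written is false by a sign, and the null-set bookkeeping is a red herring.
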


\begin{pr}\label{LDMDS3l1}
With probability $1$, $\tilde{\Lambda}(t)\leq\Lambda(t)$ for all  $t\in\mathbb{R}^d$, and then $\tilde{\Lambda}^*(\alpha)\geq\Lambda^*(\alpha)$ for all $ t\in\mathbb{R}^d$.
\end{pr}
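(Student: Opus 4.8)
The plan is to show the pointwise bound $\tilde\Lambda(t)\le\Lambda(t)$ for each fixed $t\in\mathbb R^d$ first, and then upgrade it to a simultaneous statement for all $t$ by a density-plus-continuity argument; the passage to Legendre transforms is then immediate, since $\tilde\Lambda\le\Lambda$ forces $\tilde\Lambda^*\ge\Lambda^*$. For a fixed $t$, recall that $\tilde Z_n(t)=P_n(t)W_n(t)$ with $P_n(t)=\prod_{i=0}^{n-1}m_i(t)$, so that
\[
\frac1n\log\tilde Z_n(t)=\frac1n\sum_{i=0}^{n-1}\log m_i(t)+\frac1n\log W_n(t).
\]
The sequence $(\log m_i(t))_{i\ge0}$ is stationary and ergodic (it is a measurable function of $\xi_i$), with mean $\Lambda(t)=\mathbb E\log m_0(t)$, which is finite by the assumption $\Omega_\Lambda=\mathbb R^d$; hence Birkhoff's ergodic theorem gives $\frac1n\sum_{i=0}^{n-1}\log m_i(t)\to\Lambda(t)$ a.s. So it suffices to check that $\limsup_n\frac1n\log W_n(t)\le0$ a.s. Since $W_n(t)\ge0$ is a nonnegative martingale under $\mathbb P_\xi$, it converges a.s. to a finite limit $W(t)$, and therefore $\frac1n\log W_n(t)\to0$ on the event $\{W(t)>0\}$; on the complementary event one still has $\limsup_n\frac1n\log W_n(t)\le0$ because $W_n(t)\to0$, so $\log W_n(t)$ is eventually negative (one should note $W_n(t)$ need not be monotone, but $W_n(t)\to0$ already yields $\log W_n(t)\le0$ for $n$ large, hence $\frac1n\log W_n(t)\le0$ eventually, and the $\limsup$ is $\le0$). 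This establishes $\tilde\Lambda(t)\le\Lambda(t)$ almost surely for each fixed $t$.

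To get the statement for all $t$ simultaneously, I would first apply the above on a countable dense set $Q\subset\mathbb R^d$ (e.g. $Q=\mathbb Q^d$), obtaining a single probability-one event on which $\tilde\Lambda(t)\le\Lambda(t)$ for every $t\in Q$. Then I would use that $\tilde\Lambda$ is convex (being a $\limsup$ of the convex functions $\frac1n\log\tilde Z_n$, since $z\mapsto\tilde Z_n(x)$ is a sum of exponentials and hence $\log$-convex in $x$) and lower semicontinuous, while $\Lambda$ is convex and, under the standing assumption $\Omega_\Lambda=\mathbb R^d$, finite and continuous on all of $\mathbb R^d$. A finite convex function on $\mathbb R^d$ that is dominated by a continuous function on a dense set is dominated everywhere: for arbitrary $t$, write $t$ as a limit of points of $Q$, or better, express $t$ as a convex combination of nearby rational points and use convexity of $\tilde\Lambda$ together with continuity of $\Lambda$ to pass to the limit. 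This yields $\tilde\Lambda(t)\le\Lambda(t)$ for all $t\in\mathbb R^d$ on the same probability-one event.

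The main obstacle I anticipate is the ergodic-theorem step when $\Lambda(t)$ is merely well-defined but one does not have a priori integrability control on $\log^- m_0(t)$ or on the fluctuations — however, the paper's standing hypothesis $\Omega_\Lambda=\mathbb R^d$ (so that $\Lambda(t)=\mathbb E\log m_0(t)$ is a well-defined real number everywhere) removes exactly this difficulty, making Birkhoff directly applicable. A secondary subtlety is that $\tilde\Lambda$ is defined with a $\limsup$, not a genuine limit, so the convexity of $\tilde\Lambda$ must be argued carefully: $\limsup$ of convex functions is convex, and that is all that is needed for the density argument. Once both facts are in place, the inequality on Legendre transforms follows from the elementary observation that $f\le g$ pointwise implies $f^*\ge g^*$, which I would state in one line. (I note the final phrase ``for all $t\in\mathbb R^d$'' in the proposition should read ``for all $\alpha\in\mathbb R^d$''; I would simply prove $\tilde\Lambda^*(\alpha)\ge\Lambda^*(\alpha)$ for every $\alpha$.)
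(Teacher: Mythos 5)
Your argument is correct, and the final answer is the same, but the mechanism you use to control the factor $W_n(t)=\tilde Z_n(t)/P_n(t)$ is genuinely different from the paper's. You invoke the almost-sure convergence of the nonnegative quenched martingale $W_n(t)$ to a finite limit $W(t)$, and observe that this forces $\limsup_n \frac1n\log W_n(t)\le 0$ whether $W(t)>0$ (where $\log W_n(t)$ converges) or $W(t)=0$ (where $\log W_n(t)$ is eventually negative); combined with Birkhoff for $\frac1n\log P_n(t)$, this gives $\tilde\Lambda(t)\le\Lambda(t)$ for each fixed $t$. The paper instead bypasses the martingale limit entirely: for any $s>\Lambda(t)$ it uses the series-summability lemma (Corollary~\ref{LPBRWL2.1}(b)) to show $\sum_n e^{-ns}P_n(t)<\infty$ a.s., notes that this is exactly $\mathbb E_\xi\bigl[\sum_n e^{-ns}\tilde Z_n(t)\bigr]$, concludes by a conditional-Fubini step that $\sum_n e^{-ns}\tilde Z_n(t)<\infty$ a.s., and hence $\tilde Z_n(t)=O(e^{ns})$. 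The two routes require the same integrability ($\mathbb E|\log m_0(t)|<\infty$, guaranteed by $\Omega_\Lambda=\mathbb R^d$) and the same final reduction from a dense set of $t$ to all of $\mathbb R^d$ by convexity and continuity, which you spell out a bit more explicitly than the paper does. Your route has the advantage of being conceptually transparent (martingale convergence is already available in the paper); the paper's route is more self-contained in that it only needs the first-moment identity $\mathbb E_\xi\tilde Z_n(t)=P_n(t)$ and not the martingale convergence theorem, and it fits the pattern of Lemma~\ref{LMDS5L2} used throughout. Your remark that the final clause should read ``for all $\alpha\in\mathbb R^d$'' is also a correct observation.
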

\begin{proof}
Since the functions $\tilde{\Lambda}(t)$ and $\Lambda(t)$ are convex and thus continuous, we only need to prove  $\tilde{\Lambda}(t)\leq\Lambda(t)$ a.s. for each  $t\in\mathbb{R}^d$. Fix $t\in\mathbb{R}^d$.
For $s>\Lambda(t)$, we have $\mathbb{E}\log(e^{-s}m_0(t))=\Lambda(t)-s<0$. Thus by Corollary \ref{LPBRWL2.1}(b),
$$\mathbb{E}_\xi\sum_{n} e^{-ns}\tilde{Z}_n(t)=\sum_{n}e^{-ns}P_n(t)<\infty\quad a.s.,$$
which implies that
$\sum_{n}e^{-ns}\tilde{Z}_n(t)<\infty$ a.s..
Hence, $\tilde{Z}_n(t)=O(e^{ns})$ a.s., which yields $\tilde{\Lambda}(t)\leq s$ a.s..
Since $s>\Lambda(t)$ is arbitrary, we get the conclusion.
\end{proof}

\medskip
In order to obtain the lower bounds, we need  rely on the associated Mandelbrot measure \cite{man74}.
Let $\mathbb T(u)$ be the  Galton-Watson tree rooted at $u\in\mathbb T$ and
$\mathbb T_n(u)=\{uv\in\mathbb T: |v|=n\}$ be the set of particles in the $n$-th generation of  $\mathbb T(u)$. For $u\in\mathbb T$ and $z\in \mathbb C^d$, denote
\begin{equation}
\tilde Z_n(u,z)=\sum_{uv\in\mathbb T_n(u)}e^{\langle z,S_{uv}-S_u\rangle}
\end{equation}
and
\begin{equation}\label{LDMDWE}
W_n(u,z)=\frac{\tilde Z_n(u,z)}{\mathbb E_\xi\tilde Z_n(u,z)}=\frac{\sum_{uv\in\mathbb T_n(u)}e^{\langle z,S_{uv}-S_u\rangle}}{m_{|u|}(z)\cdots m_{|u|+n-1}(z)}.
\end{equation}
In particular, we have $\tilde Z_n(\emptyset,z)=\tilde Z_n(z)$ and $W_n(\emptyset, z)=W_n(z)$. 
For $t\in\mathbb R^d$, the martingale $\{W_n(u,t)\}$ is non-negative, and hence it converges a.s. to a limit
\begin{equation}
W(u,t):=\lim_{n\rightarrow\infty}W_n(u,t).
\end{equation}
By the branching property, we can see that
$$W(u,t)=\sum_{ui\in\mathbb T_1(u)}\frac{e^{\langle t,L_i(u)\rangle}}{m_{|u|}(t)}W(ui,t).$$
Thus for each $t\in\mathbb R^d$, we can define a unique measure $\mu_t$ on $\partial \mathbb T$ such that
\begin{equation}
\mu_t([u])=\tilde X_u(t)W(u,t).
\end{equation}
This measure  $\mu_t$ is  the so-called  \emph{ Mandelbrot measure} for BRWRE. 
Clearly, $\mu_t$ is finite with $\mu_t(\partial \mathbb T)=W(t)$.

\begin{thm}\label{LS1.4TT1}
With probability $1$, for all $t\in\Omega$,
$$\lim_{n\rightarrow\infty}\frac{1}{n}\log \mu_t([u|n])= \langle t,\nabla\Lambda(t)\rangle-\Lambda(t)\qquad \text {for $\mu_t$-a.s. $u\in \partial \mathbb T$.}$$
\end{thm}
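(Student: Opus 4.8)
The plan is to compute the local dimension of the Mandelbrot measure $\mu_t$ by combining the law of large numbers for $S_{u|n}$ along a $\mu_t$-typical ray with the exponential growth rate of the normalizing constants $P_n(t)$. Fix a compact set $K\subset\Omega$; by separability of $\mathbb R^d$ and continuity of $\nabla\Lambda$, it suffices to prove the statement $\mathbb P$-a.s. for each fixed $t\in\Omega$ and then upgrade to a simultaneous statement on $\Omega$ via a countable dense subset together with a monotonicity/continuity argument in $t$ (this last upgrade is where the uniform convergence from Theorem \ref{Conver1.1} is used: $W(u,z)$ is jointly continuous in $z$ near $t$, so $\log\mu_t([u|n])$ varies continuously in $t$ in a controlled way). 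So fix $t\in\Omega$.

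Write
\[
\frac{1}{n}\log\mu_t([u|n])=\frac{1}{n}\log\tilde X_{u|n}(t)+\frac{1}{n}\log W(u|n,t)
=\frac{\langle t,S_{u|n}\rangle}{n}-\frac{1}{n}\log P_n(t)+\frac{1}{n}\log W(u|n,t).
\]
The middle term converges to $\Lambda(t)=\mathbb E\log m_0(t)$ a.s.\ by the ergodic theorem applied to $\log P_n(t)=\sum_{i=0}^{n-1}\log m_i(t)$ (here we use $\Omega_\Lambda=\mathbb R^d$, so $\log m_i(t)$ is integrable and stationary ergodic). The last term is negligible: since $t\in\Omega_1\cap\Omega_2$, Theorem \ref{Conver1.1} (applied along the shifted environments) together with the moment bound (\ref{LDMDS3re}) gives $\mathbb E\log^+(\mathbb E_\xi W(u,t)^p)^{1/p}<\infty$ for some $p>1$; a Borel–Cantelli / maximal-inequality argument along the tree, exactly as in Attia \cite{na}, then forces $\frac{1}{n}\log W(u|n,t)\to 0$ for $\mu_t$-a.s.\ $u$. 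The real work is the first term: I must show $\frac{\langle t,S_{u|n}\rangle}{n}\to\langle t,\nabla\Lambda(t)\rangle$ for $\mu_t$-a.s.\ $u$. The natural route is a "tilted" change of measure: under $\mu_t$ (suitably integrated against the environment), the increments $S_{u|n}-S_{u|(n-1)}$ along the spine behave like a size-biased / Peyrière-type process whose one-step mean, conditionally on the environment $\xi_{n-1}$, equals $\frac{1}{m_{n-1}(t)}\mathbb E_{\xi}\sum_i L_i(u)e^{\langle t,L_i(u)\rangle}=\nabla_t\log m_{n-1}(t)$. The ergodic theorem for this spine process then yields the limit $\mathbb E\,\nabla_t\log m_0(t)=\nabla\Lambda(t)$, using that differentiation under $\mathbb E$ is justified because $\Omega_\Lambda=\mathbb R^d$ (so $\Lambda$ is differentiable and $\nabla\Lambda=\mathbb E\nabla_t\log m_0$).

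Concretely, I would introduce the Peyrière measure $\mathbb Q_t$ on $\Gamma\times\partial\mathbb T$ by $d\mathbb Q_t=\mu_t([u|n])\,d\mathbb P$ on $\mathcal F_n\otimes\sigma([u|n])$, check its consistency (this uses $\mathbb E_\xi W(t)\le 1$ and the martingale property), and verify that under $\mathbb Q_t$ the sequence of "spine environments and displacements" is stationary and ergodic — ergodicity being inherited from that of $\xi$ via the tail-triviality of the spine decomposition. Then the displacement increments $L_{\text{spine}}^{(k)}$ satisfy, by the strong law for stationary ergodic sequences (after controlling integrability: $\mathbb E_{\mathbb Q_t}\|L^{(0)}_{\text{spine}}\|=\mathbb E\,\frac{1}{m_0(t)}\mathbb E_\xi\sum_i\|L_i\|e^{\langle t,L_i\rangle}<\infty$, which follows from differentiability of $\Lambda$ at $t$), the convergence $\frac1n S_{u|n}\to\mathbb E_{\mathbb Q_t}L^{(0)}_{\text{spine}}=\mathbb E\,\nabla_t\log m_0(t)=\nabla\Lambda(t)$, $\mathbb Q_t$-a.s., hence $\mu_t$-a.s.\ for $\mathbb P$-a.e.\ $\xi$. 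Pairing with $t$ gives $\frac{\langle t,S_{u|n}\rangle}{n}\to\langle t,\nabla\Lambda(t)\rangle$, and assembling the three pieces yields the claimed limit $\langle t,\nabla\Lambda(t)\rangle-\Lambda(t)$.

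The main obstacle I anticipate is twofold and both parts stem from the environment being merely stationary ergodic rather than i.i.d.: first, establishing ergodicity of the spine process under $\mathbb Q_t$ (in the i.i.d.\ case the spine environment is i.i.d.\ and this is immediate; here one must argue that the Peyrière measure does not destroy ergodicity — I expect to handle this by realizing $\mathbb Q_t$ as a skew product over the shift on environment space and invoking a Furstenberg-type argument, or by directly checking that shift-invariant events have $\mathbb Q_t$-probability $0$ or $1$ using $\mathbb E_\xi W(t)$ being a fixed point of the smoothing transform); and second, the integrability inputs $\mathbb E\,\frac{1}{m_0(t)}\mathbb E_\xi\sum_i\|L_i\|e^{\langle t,L_i\rangle}<\infty$ and the negligibility of $\frac1n\log W(u|n,t)$, which must be extracted from the somewhat implicit conditions defining $\Omega_1$ and $\Omega_2$ rather than from clean i.i.d.\ moment hypotheses — here Lemma \ref{LDMAl4}, Lemma \ref{LDMAl5}, Corollary \ref{LPBRWL2.1} and the bound (\ref{LDMDS3re}) will have to be invoked repeatedly along shifted environments $T^k\xi$, and keeping track of the a.s.\ exceptional sets uniformly over the countable dense set of $t$'s is the delicate bookkeeping step.
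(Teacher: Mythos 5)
Your three-term decomposition of $\tfrac1n\log\mu_t([u|n])$ is exactly the paper's (\ref{LDMDS4em}), and your handling of the middle term (ergodic theorem for $\tfrac1n\log P_n(t)$) and of the tail term $\tfrac1n\log W(u|n,t)\to 0$ (Borel--Cantelli along the tree using the $L^p$ bounds coming from $\Omega_1\cap\Omega_2$) coincides in spirit with the paper's Proposition \ref{LDMDS3p2}. Where you genuinely diverge is on the first term, $\tfrac1n S_{u|n}\to\nabla\Lambda(t)$ $\mu_t$-a.s., which you attack via the Peyri\`ere (size-biased) measure and a spine decomposition, proving a strong law for the tilted spine increments. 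The paper instead proves this as Proposition \ref{LDMDS3p1} by first establishing (Proposition \ref{LDMDS3p3}) that the normalized log-Laplace transforms $L_n(t,\lambda)=\tfrac1n\log\int e^{n\langle\lambda,x\rangle}d\nu_{t,n}(x)$ converge to $\Lambda(t+\lambda)-\Lambda(t)$, uniformly in $(t,\lambda)$ near $(t_0,0)$, via the uniform convergence of the auxiliary martingale $V_n(t,\lambda)=\sum_{u\in\mathbb T_n}\tilde X_u(t+\lambda)W(u,t)$ --- a direct reuse of the Theorem \ref{Conver1.1} machinery --- and then invokes a Chernoff/Borel--Cantelli step (citing \cite{na}, Corollary 2.5) to pass from convergence of the log-MGF to $\mu_t$-a.s.\ convergence of $S_{u|n}/n$.

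What each approach buys: the paper's analytic route recycles the martingale $L^p$-convergence toolbox already built for Theorem \ref{Conver1.1}, and the uniformity in $(t,\lambda)$ it produces is precisely what makes the ``for all $t\in\Omega$ simultaneously'' statement come out cleanly. Your spine route is more probabilistic and transparent about where $\nabla\Lambda(t)$ comes from, but in this setting it shifts the burden onto two points you correctly flag as delicate and which would need real work: (i) ergodicity of the skew-product $(\xi_n,\Delta S_{n+1})$ under $\mathbb Q_t$ --- true here because, conditionally on $\xi$, the spine increments are independent with law a measurable time-homogeneous function of $T^n\xi$, so the invariant $\sigma$-field reduces to that of $\xi$, but this must be argued and is not a one-liner; and (ii) integrability of the spine increment, $\mathbb E\bigl\|\frac{\nabla m_0(t)}{m_0(t)}\bigr\|<\infty$. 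Note that mere differentiability of $\Lambda$ at a point does not by itself give this; you need to exploit that $\Lambda$ is finite and differentiable on a neighbourhood (convexity of $\log m_0(\cdot)$ plus monotone difference quotients), which is why the paper assumes $\Omega_\Lambda=\mathbb R^d$. So your proposal is a correct alternative in outline, but filling those two gaps would roughly match the effort the paper spends on Proposition \ref{LDMDS3p3}, and would not obviously yield the uniformity in $t$ as painlessly.
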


Theorem \ref{LS1.4TT1} generalizes the result of Liu and Rouault \cite{liu96} for the Galton-Watson processes, and that of Attia \cite{na} for classical branching random walks. Such a result
allows the calculations of the dimension of $\mu_t$, as well as the Hausdorff and Packing dimensions of the support of $\mu_t$ and those of the level sets $E(\alpha)$, just as what were done in \cite{kahane, kahane87, liu00, na,na2014}.

By the definition of $\mu_t$, we see that
\begin{equation}\label{LDMDS4em}
\frac{1}{n}\log\mu_t([u|n])=\langle t, \frac{S_{u|n}}{n}\rangle-\frac{1}{n}\log P_n(t)+\frac{1}{n}\log W(u|n,t).
\end{equation}
The ergodic theorem gives that $\frac{1}{n}\log P_n(t)\rightarrow\Lambda (t)$ a.s. as $n\rightarrow\infty$. To prove Theorem \ref{LS1.4TT1}, we need to calculate the limits of the other two terms in the right hand side of (\ref{LDMDS4em}). To this end, the following lemma is useful.

\begin{lem}\label{LDMAl7}
Let $(\alpha_n)_{n\geq0}$ be a stationary and ergodic sequence of non-negative random variables satisfying $\mathbb E\log \alpha_0\in(-\infty,0)$.
If $t_0\in\Omega$, then there exist $\varepsilon_0>0$ and $p_0\in(1,2]$ such that for all $ 0<\varepsilon\leq\varepsilon_0$ and $1\leq p\leq p_0$, the series
\begin{equation}\label{LDMDS3l2}
\sum_n \alpha_0\cdots\alpha_{n-1}\sup_{z\in D(t_0,\varepsilon)}\left(\mathbb E_{T^n\xi}|W(z)|^p\right)^{1/p}<\infty\quad a.s..
\end{equation}
\end{lem}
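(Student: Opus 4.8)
The plan is to establish \eqref{LDMDS3l2} by reducing it, via the uniform $L^p$-bound from Theorem \ref{Conver1.1}, to an application of Corollary \ref{LPBRWL2.1}(a) together with a stationarity/ergodicity argument on the tail term. The overall strategy mirrors the proof of Theorem \ref{Conver1.1}, but now we must control $W(z)$ itself (not just increments), and multiply by an extra ergodic factor $\alpha_0\cdots\alpha_{n-1}$.

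First, fix $t_0\in\Omega$. By Theorem \ref{Conver1.1} (applied to the compact set $K=\{t_0\}$), there are $\delta>0$ and $p_K>1$ so that $W_n(z)\to W(z)$ uniformly on $D(t_0,\delta)$ a.s. and in $\mathbb P_\xi$-$L^p$ for $p\in(1,p_K]$; shrink $p_0:=\min\{p_K,2\}$ and choose $\varepsilon_0\le\delta$. The key quantitative input is the bound \eqref{LDMDS3re} from the remark following Theorem \ref{Conver1.1}: for $z\in D(t_0,\varepsilon)$ with $\varepsilon\le\varepsilon_0$,
\begin{equation*}
\left(\mathbb E_\xi|W(z)|^p\right)^{1/p}\le C\sum_{k=0}^\infty \frac{P_k(px)^{1/p}}{|P_k(z)|}\left(\mathbb E_{T^k\xi}|W_1(z)-1|^p\right)^{1/p}+1.
\end{equation*}
Taking $\sup_{z\in D(t_0,\varepsilon)}$ and writing $\beta_k=\sup_{z\in D(t_0,\varepsilon)} m_k(px)^{1/p}/|m_k(z)|$ and $\gamma_k=\sup_{z\in D(t_0,\varepsilon)}(\mathbb E_{T^k\xi}|W_1(z)-1|^p)^{1/p}$ (the same quantities as in \eqref{LDMAS3pt1ee}), one gets $\sup_{z}(\mathbb E_\xi|W(z)|^p)^{1/p}\le C\sum_{k\ge0}\beta_0\cdots\beta_{k-1}\gamma_k+1$. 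Crucially, $(\beta_k)$ and $(\gamma_k)$ depend on $\xi$ only through $T^k\xi$, so the quantity obtained by applying the shift $T^n$ — call it $\Theta_n:=\sup_{z\in D(t_0,\varepsilon)}(\mathbb E_{T^n\xi}|W(z)|^p)^{1/p}$ — satisfies $\Theta_n\le C\sum_{k\ge0}\beta_n\cdots\beta_{n+k-1}\gamma_{n+k}+1$. Hence the series in \eqref{LDMDS3l2} is bounded by $\sum_n\alpha_0\cdots\alpha_{n-1}\big(C\sum_{k\ge0}\beta_n\cdots\beta_{n+k-1}\gamma_{n+k}+1\big)$.

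Next I would split this into two pieces. The term $\sum_n\alpha_0\cdots\alpha_{n-1}$ converges a.s. by Corollary \ref{LPBRWL2.1}(b), since $\mathbb E\log\alpha_0<0$ (it is the hypothesis on $(\alpha_n)$). For the main double sum $\sum_n\sum_k\alpha_0\cdots\alpha_{n-1}\beta_n\cdots\beta_{n+k-1}\gamma_{n+k}$, reindex with $m=n+k$ so it becomes $\sum_m\gamma_m\sum_{n=0}^m\alpha_0\cdots\alpha_{n-1}\beta_n\cdots\beta_{m-1}$, which is exactly the form appearing in Corollary \ref{LPBRWL2.1}(a). To invoke it I need: $(\alpha_n,\beta_n,\gamma_n)$ jointly stationary and ergodic (true, as all three are measurable functions of the shifted environment $T^n\xi$, and $\xi$ is stationary ergodic — $(\alpha_n)$ is assumed so and can be taken jointly with $\xi$ up to enlarging the probability space, or one simply notes $\alpha_n=\alpha_0\circ T^n$); $\mathbb E|\log\alpha_0|<\infty$ (assumed); $\mathbb E|\log\beta_0|<\infty$; $\mathbb E\log^+\gamma_0<\infty$; and $\max\{\mathbb E\log\alpha_0,\mathbb E\log\beta_0\}<0$. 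The condition $\mathbb E\log^+\gamma_0<\infty$ follows from Lemma \ref{LDMAl4} as in the proof of Theorem \ref{Conver1.1} (note $\gamma_0\le\sup_z(\mathbb E_\xi|W_1(z)|^p)^{1/p}+1$), and the finiteness and negativity of $\mathbb E\log\beta_0$ — together with $\mathbb E\log^-\beta_0>-\infty$, giving $\mathbb E|\log\beta_0|<\infty$ — come from Lemma \ref{LDMAl5} upon choosing $\varepsilon$ small enough (exactly as the estimate $\mathbb E\log\beta_0=\frac1p\mathbb E\log\sup_z m_0(px)/|m_0(z)|^p<0$ in the Theorem \ref{Conver1.1} proof, with the lower bound $\mathbb E\log^-\beta_0>-\infty$ coming from the uniform two-sided control of $m_0$ on $D(t_0,\delta)\subset\Omega_2$). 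Then Corollary \ref{LPBRWL2.1}(a) gives that the double sum is a.s. finite, completing the proof.

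The main obstacle I anticipate is bookkeeping rather than a genuine difficulty: one must carefully justify passing the shift $T^n$ through the bound \eqref{LDMDS3re} to control $\mathbb E_{T^n\xi}|W(z)|^p$ uniformly, and then honestly verify the joint stationarity-ergodicity of the triple $(\alpha_n,\beta_n,\gamma_n)$ — in particular that coupling the externally-given sequence $(\alpha_n)$ with the environment does not destroy ergodicity (this is fine if $(\alpha_n)$ is itself a function of $\xi$, which is the intended use; otherwise one works on the product system, which remains ergodic if at least one factor is and the other is a $K$-system or if one simply restricts to the application at hand). The integrability conditions $\mathbb E|\log\beta_0|<\infty$ and $\mathbb E\log^+\gamma_0<\infty$ are then routine consequences of Lemmas \ref{LDMAl4} and \ref{LDMAl5} with $\varepsilon_0$ chosen small enough, and the strict negativity $\mathbb E\log\beta_0<0$ is the same threshold choice of $\varepsilon$ already used in Theorem \ref{Conver1.1}.
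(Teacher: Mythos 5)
Your proposal is correct and follows essentially the same route as the paper's proof: bound $\sup_{z\in D(t_0,\varepsilon)}\bigl(\mathbb E_{T^n\xi}|W(z)|^p\bigr)^{1/p}$ via the shifted version of \eqref{LDMDS3re}, split the resulting series into $\sum_n\alpha_0\cdots\alpha_{n-1}$ (handled by Corollary \ref{LPBRWL2.1}(b)) and the double sum, then reindex the double sum and invoke Corollary \ref{LPBRWL2.1}(a) with $\mathbb E\log\beta_0<0$ and $\mathbb E\log^+\gamma_0<\infty$ supplied by Lemmas \ref{LDMAl5} and \ref{LDMAl4}. Your extra care in verifying $\mathbb E|\log\beta_0|<\infty$ and in flagging the joint stationarity of $(\alpha_n,\beta_n,\gamma_n)$ is a sensible tightening of the bookkeeping that the paper leaves implicit, but does not change the argument.
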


\begin{proof}
Recall $\beta_n$ and $\gamma_n$ defined in (\ref{LDMAS3pt1ee}).  According to the proof of Theorem \ref{Conver1.1}, we can take $\varepsilon_0>0$ and $p_0>1$ such that $\mathbb E\log \beta_0<0$
and $\mathbb E\log^+\gamma_0<\infty$ with $\varepsilon=\varepsilon_0$ and $p=p_0$.
By (\ref{LDMDS3re}),  we    deduce that for all  $0<\varepsilon\leq\varepsilon_0$ and $1\leq p\leq\min\{2,p_0\}$,
$$\sup_{z\in D(t_0,\varepsilon)}\left(\mathbb E_{T^n\xi}|W(z)|^p\right)^{1/p}\leq\sup_{z\in D(t_0,\varepsilon_0)}\left(\mathbb E_{T^n\xi}|W(z)|^{p_0}\right)^{1/{p_0}}\leq C\sum_{k=n}^\infty\beta_n\cdots\beta_{k-1}\gamma_k+1.$$
Thus
\begin{eqnarray*}
\sum_{n=0}^\infty\alpha_0\cdots\alpha_{n-1}\sup_{z\in D(t_0,\varepsilon)}\left(\mathbb E_{T^n\xi}|W(z)|^p\right)^{1/p}
&\leq&C\sum_{n=0}^\infty \alpha_0\cdots\alpha_{n-1}\sum_{k=n}^\infty\beta_n\cdots\beta_{k-1}\gamma_k+\sum_{n=0}^\infty \alpha_0\cdots\alpha_{n-1}.
\end{eqnarray*}
Corollary \ref{LPBRWL2.1}(b) shows that the second series in the right hand side of the inequality above converges a.s. since $\mathbb E\log\alpha_0<0$. For the first series,
we have
$$\sum_{n=0}^\infty\alpha_0\cdots\alpha_{n-1}\sum_{k=n}^\infty\beta_n\cdots\beta_{k-1}\gamma_k
=\sum_{k=0}^\infty\gamma_k\sum_{n=0}^k\alpha_0\cdots\alpha_{n-1}\beta_n\cdots\beta_{k-1}<\infty\quad a.s.$$
from Corollary \ref{LPBRWL2.1}(a), since $\mathbb E\log^+\gamma_0<\infty$ and $\max\{\mathbb E\log \alpha_0,\mathbb E\log \beta_0\}<0$.

\end{proof}

\begin{pr}\label{LDMDS3p2}
With probability $1$, for all $t\in\Omega$,
$$\lim_{n\rightarrow\infty}\frac{1}{n}\log W(u|n,t)=0\quad\text{
for $\mu_t$-a.s. $u\in\partial \mathbb T$.}$$
\end{pr}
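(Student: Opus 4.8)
The plan is to establish the two one-sided bounds $\limsup_n\tfrac1n\log W(u|n,t)\le0$ and $\liminf_n\tfrac1n\log W(u|n,t)\ge0$ for $\mu_t$-a.e. $u$, first for a fixed $t\in\Omega$ and then simultaneously for all of them. Since $W(u|n,t)$ depends on $u$ only through $u|n$, for a fixed $t$ it suffices to show that for every $\varepsilon>0$,
\[
\sum_{n}\mu_t\bigl(\{u:\,W(u|n,t)\ge e^{\varepsilon n}\}\bigr)<\infty
\quad\text{and}\quad
\sum_{n}\mu_t\bigl(\{u:\,W(u|n,t)\le e^{-\varepsilon n}\}\bigr)<\infty\qquad\text{a.s.},
\]
because the Borel--Cantelli lemma for the finite measure $\mu_t$ then gives $\limsup_n\tfrac1n\log W(u|n,t)\le\varepsilon$ and $\liminf_n\tfrac1n\log W(u|n,t)\ge-\varepsilon$ for $\mu_t$-a.e. $u$, and intersecting over $\varepsilon=1/k$ ($k\ge1$) closes the fixed-$t$ case. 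Both series are controlled through the identity $\mu_t([v])=\tilde X_v(t)W(v,t)$ for $|v|=n$, combined with the structure recorded in the proof of Theorem \ref{Conver1.1} (see (\ref{LDMDWE})): under $\mathbb P_\xi$ and conditionally on $\mathcal F_n$, the variables $\{W(v,t)\}_{|v|=n}$ are i.i.d.\ with law $\mathbb P_{T^n\xi}(W(t)\in\cdot)$ and independent of the $\mathcal F_n$-measurable weights $\tilde X_v(t)$; since $\sum_{|v|=n}\tilde X_v(t)=W_n(t)$ and $\mathbb E_\xi W_n(t)=1$, for every nonnegative $\phi$ one obtains $\mathbb E_\xi\sum_{|v|=n}\tilde X_v(t)\,\phi(W(v,t))=\mathbb E_{T^n\xi}[\phi(W(t))]$.

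For the lower tail I would fix $\delta\in(0,1)$ and use $w\mathbf{1}_{\{w\le e^{-\varepsilon n}\}}\le e^{-\delta\varepsilon n}w^{1-\delta}$ to get
\[
\mathbb E_\xi\,\mu_t\bigl(\{u:W(u|n,t)\le e^{-\varepsilon n}\}\bigr)
\le e^{-\delta\varepsilon n}\,\mathbb E_{T^n\xi}\bigl[W(t)^{1-\delta}\bigr]
\le e^{-\delta\varepsilon n}\bigl(\mathbb E_{T^n\xi}W(t)\bigr)^{1-\delta}\le e^{-\delta\varepsilon n},
\]
by Jensen's inequality and $\mathbb E_{T^n\xi}W(t)\le1$; taking $\mathbb E$ and summing in $n$ shows the second series above has finite expectation, hence is a.s.\ finite. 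This half uses nothing about $W(t)$ beyond $L^1(\mathbb P_\xi)$ and no positivity.

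The upper tail is the substantive step. Repeating the computation with the exponent $p\in(1,2]$ provided by Lemma \ref{LDMAl7} (under which $W(t)\in L^{p}(\mathbb P_\xi)$), and using $w\mathbf{1}_{\{w\ge e^{\varepsilon n}\}}\le e^{-(p-1)\varepsilon n}w^{p}$, gives
\[
\mathbb E_\xi\,\mu_t\bigl(\{u:W(u|n,t)\ge e^{\varepsilon n}\}\bigr)\le e^{-(p-1)\varepsilon n}\,\mathbb E_{T^n\xi}\bigl[W(t)^{p}\bigr].
\]
Here $\mathbb E_{T^n\xi}[W(t)^{p}]$ is no longer uniformly bounded, and this is exactly where Lemma \ref{LDMAl7} is needed: taking there the constant sequence $\alpha_n\equiv e^{-c}$ ($c>0$) and letting $c\downarrow0$ shows $\limsup_n\tfrac1n\log\mathbb E_{T^n\xi}W(t)^{p}\le0$ a.s., so $\sum_n e^{-(p-1)\varepsilon n}\mathbb E_{T^n\xi}W(t)^{p}<\infty$ a.s.; hence the quenched expectation of the first series is a.s.\ finite, and the statement for each fixed $t\in\Omega$ follows.

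Finally, to obtain the conclusion simultaneously for all $t\in\Omega$ with probability one, I would run the above on a countable dense set $D\subset\Omega$ and pass to an arbitrary $t\in\Omega$ by approximation; the tail estimates become uniform for $t$ in a compact $K\subset\Omega$ once $\mathbb E_{T^n\xi}W(t)^{p}$ is replaced by $\sup_{z\in D(t,\varepsilon)}\mathbb E_{T^n\xi}|W(z)|^{p}$ and one invokes the uniform convergence of $W_n(z)$ on $K(\delta)$ from Theorem \ref{Conver1.1}, so the subexponential growth control transfers to a whole complex neighbourhood of each point of $D$. I expect this last step to be the main obstacle: one must carry a $\mu_t$-a.e.\ statement across different values of $t$ although the Mandelbrot measures $\mu_t$ and $\mu_{t'}$ are in general mutually singular, and it is precisely for this reason that the complex-neighbourhood bounds of Section \ref{LDMDS2} — rather than merely the real-$t$ convergence of $W_n$ — are built into the earlier lemmas.
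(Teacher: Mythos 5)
Your fixed-$t$ argument is correct and runs along essentially the same lines as the paper's local estimate: the same Markov-type truncation $w\mathbf 1_{\{w\ge a^n\}}\le a^{-(p-1)n}w^p$, the same reduction via $\mathbb E_\xi\sum_{|v|=n}\tilde X_v(t)\phi(W(v,t))=\mathbb E_{T^n\xi}\phi(W(t))$, and the same appeal to Lemma~\ref{LDMAl7} (your device of taking $\alpha_n\equiv e^{-c}$ and letting $c\downarrow0$ is a valid way to extract the needed subexponential control of $\mathbb E_{T^n\xi}W(t)^p$). The lower-tail step via Jensen is also fine and is in fact a little cleaner than what the paper sketches.

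The gap is precisely where you flagged it, and your proposed fix does not close it. Your dense-set scheme would give: for a.e.\ $\omega$, the Borel--Cantelli conclusion holds for every $t$ in a fixed countable $D\subset\Omega$. To pass from $D$ to a continuum of $t$'s you would need some form of continuity in $t$ of the pathwise quantity $\sum_n\mu_t(E_n(t))$, and this fails: the indicator $\mathbf 1_{\{W(u,t)\ge a^n\}}$ is discontinuous in $t$, and the measures $\mu_t$, $\mu_{t'}$ are mutually singular, so a $\mu_{t'}$-full set gives no information about a $\mu_t$-full set. Replacing $\mathbb E_{T^n\xi}W(t)^p$ by $\sup_{z\in D(t,\varepsilon)}\mathbb E_{T^n\xi}|W(z)|^p$ only upgrades the estimate to $\sup_{t\in K}\mathbb E_\xi\,\mu_t(E_n^+(t))\le C_n$ with $\sum_n C_n<\infty$ a.s.; that is still a bound on $\sup_t\mathbb E_\xi[\cdot]$, not on $\mathbb E_\xi[\sup_t\cdot]$, and the exchange of $\sup_t$ and $\mathbb E_\xi$ is exactly the hard point. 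The paper does not approximate at all: it proves directly, for each $t_0\in\Omega$, that
\[
\mathbb E_\xi\Bigl(\sup_{t\in B(t_0,\varepsilon/2)}\sum_n\mu_t(E_n)\Bigr)<\infty\quad\text{a.s.},
\]
by using Cauchy's formula to dominate $\sup_{t\in B(t_0,\varepsilon/2)}W(u,t)^p$ by a supremum over the complex polydisc $D(t_0,\varepsilon)$ \emph{inside} the sum over $u\in\mathbb T_n$, which is legitimate because $W(u,\cdot)$ is analytic there by Theorem~\ref{Conver1.1}. The a.s.\ event then automatically covers the whole ball $B(t_0,\varepsilon/2)$, and $\Omega$ is exhausted by countably many such balls. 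This ``sup under the expectation via Cauchy'' step is the missing ingredient in your proposal; without it, the final paragraph of your plan does not go through.
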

\begin{proof}
Let $a>1$ be a constant.
For  $t\in\mathbb{R}^d$ and $n\geq1$, we set
$E_n^-=\{u\in\partial \mathbb T:W(u|n,t)\leq a^{-n}\}$ and
$
E_n^+=\{u\in\partial \mathbb T:W(u|n,t)\geq a^n\}.
$
It suffices to show that for $E_n\in\{E_n^-,E_n^+\}$,
for each $t_0\in\Omega$, there exists $\varepsilon>0$ small enough such that
\begin{equation}\label{LDMDS3PPe2}
\mathbb E_\xi\left(\sup_{t\in B(t_0,\varepsilon/2)}\sum_n\mu_t(E_n)\right)<\infty\quad a.s..
\end{equation}
We just consider the case where $E_n=E_n^+$. The proof for $E_n=E_n^-$ is similar.
Notice that for $p>1$,
$$\mu_t(E_n^+)=\sum_{u\in \mathbb T_n}\tilde X_u(t)W(u,t)\mathbf{1}_{\{W(u,t)\geq a^{n}\}}\leq  a^{-(p-1)n}\sum_{u\in \mathbb T_n}\tilde X_u(t)W(u,t)^p. $$
Theorem \ref{Conver1.1} ensures that the limit $W(u,z):=\lim_n W_n(u,z)$ exists a.s. and analytic on $D(t_0,\varepsilon)$ for some $\varepsilon>0$.
Applying Cauchy's formula, we obtain
\begin{eqnarray}\label{LDMDS3PPe3}
\sum_n\mathbb E_\xi\left(\sup_{t\in B(t_0,\varepsilon/2)}\mu_t(E_n^+)\right)
&\leq&C\sum_n a^{-(p-1)n}\sup_{z\in D(t_0,\varepsilon)}\mathbb E_\xi \sum_{u\in \mathbb T_n}|\tilde X_u(z)|\left|W(u,z)\right|^p\nonumber \\
&\leq&C\left(\sum_n\tilde d_0(\varepsilon)\cdots \tilde d_{n-1}(\varepsilon)\left(\mathbb E_{T^n\xi}|W(z)|^p\right)^{1/p}\right)^p,
\end{eqnarray}
where
$$ \tilde d_{n}(\varepsilon)=a^{-\frac{p-1}{p}}\left(\sup_{z\in D(t_0,\varepsilon)}\frac{m_n(x)}{|m_n(z)|}\right)^{1/p}.$$
Similarly to the proof of Lemma \ref{LDMAl5}, the dominated convergence theorem gives
$$\lim_{\varepsilon\downarrow0}\mathbb E\log \tilde d_0(\varepsilon)=-\frac{p-1}{p}\log a<0.$$
Therefore, by Lemma \ref{LDMAl7}, the series in the last line of (\ref{LDMDS3PPe3}) converges a.s. for some $p\in(1,2]$ and $\varepsilon>0$ small enough,
so (\ref{LDMDS3PPe2}) holds for $E_n=E_n^+$.
\end{proof}

\medskip
For $t\in\Omega$ and $n\in\mathbb N$, let us define a measure on $\mathbb{R}^d$ as:
\begin{equation}
\nu_{t,n}(\cdot)=\mu_t\left(u\in\partial \mathbb T:\frac{S_{u|n}}{n}\in\cdot\right).
\end{equation}
For $(t,\lambda)\in \mathbb R^d\times \mathbb R^d$, let
\begin{equation}
L_n(t,\lambda)=\frac{1}{n}\log\int_{\mathbb{R}^d}e^{n\langle\lambda,x\rangle}d\nu_{t,n}(x)
~~\text{and}~~L_t(\lambda)=\limsup_{n\rightarrow\infty}L_n(t,\lambda).
\end{equation}

\begin{pr}\label{LDMDS3p3}
Let $t_0\in\Omega$. Then there exists  $\varepsilon_0>0$,  such that
\begin{equation}\label{LDMDS3p3e1}
\mathbb P(L_t(\lambda)=\Lambda(t+\lambda)-\Lambda(t),\;\;\forall t\in B(t_0,\varepsilon_0),\;\;\forall \lambda\in B(0,\varepsilon_0))=1.
\end{equation}
\end{pr}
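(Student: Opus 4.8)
The plan is to reduce $L_n(t,\lambda)$ to a sum of two terms, one converging by an ergodic argument and one converging to a positive random limit. Since $\Omega$ is open and $t_0\in\Omega$, I first fix $\varepsilon_0>0$ so small that $K:=\overline{B(t_0,3\varepsilon_0)}$ is a compact subset of $\Omega$; Theorem \ref{Conver1.1} then provides $\delta\in(0,\varepsilon_0]$ and $p\in(1,2]$ such that $W_n(z)\to W(z)$ uniformly, a.s.\ and in $\mathbb P_\xi$-$L^p$, on a complex neighbourhood $K(\delta)$ of $K$, and, shrinking $\varepsilon_0$ further, so that Lemmas \ref{LDMAl5} and \ref{LDMAl7} apply with this $p$. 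For $u\in\mathbb T_n$ one has $\mu_t([u])=\tilde X_u(t)W(u,t)$ and $S_{u|n}=S_u$, whence
$$\int_{\mathbb R^d}e^{n\langle\lambda,x\rangle}\,d\nu_{t,n}(x)=\int_{\partial\mathbb T}e^{\langle\lambda,S_{u|n}\rangle}\,d\mu_t(u)=\frac{1}{P_n(t)}\sum_{u\in\mathbb T_n}e^{\langle t+\lambda,S_u\rangle}W(u,t)=\frac{P_n(t+\lambda)}{P_n(t)}\,V_n(t,\lambda),$$
where $V_n(t,\lambda):=\sum_{u\in\mathbb T_n}\tilde X_u(t+\lambda)W(u,t)$. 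Hence $L_n(t,\lambda)=\tfrac1n\log\tfrac{P_n(t+\lambda)}{P_n(t)}+\tfrac1n\log V_n(t,\lambda)$, and it suffices to treat the two summands uniformly for $t\in B(t_0,\varepsilon_0)$, $\lambda\in B(0,\varepsilon_0)$.

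For the first summand, $\tfrac1n\log\tfrac{P_n(t+\lambda)}{P_n(t)}=\tfrac1n\sum_{i=0}^{n-1}\big(\log m_i(t+\lambda)-\log m_i(t)\big)$. Because $\Lambda$ is finite on $\mathbb R^d$ and $t_0\in\Omega_2$, one checks that $\mathbb E\sup_{s\in \overline{B(t_0,2\varepsilon_0)}}|\log m_0(s)|<\infty$ for $\varepsilon_0$ small (the negative part is dominated by $\log^-\alpha_0(t_0,\delta)$, and the positive part by a finite sum of $\log^+m_0(\cdot)$ over the corners of a small box, using $\mathbb E\log^+m_0(s)<\infty$ since $\Lambda(s)\in\mathbb R$ and $\mathbb E\log^-m_0(s)<\infty$). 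A uniform version of Birkhoff's ergodic theorem (valid because $s\mapsto\log m_0(s)$ is continuous and dominated) then yields $\tfrac1n\sum_{i<n}\log m_i(s)\to\Lambda(s)$ uniformly on $\overline{B(t_0,2\varepsilon_0)}$, a.s., so $\tfrac1n\log\tfrac{P_n(t+\lambda)}{P_n(t)}\to\Lambda(t+\lambda)-\Lambda(t)$ uniformly, a.s.

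For the second summand, the goal is to show $V_n(t,\lambda)\to W(t+\lambda)$ a.s., uniformly in $(t,\lambda)$; as the limit is then a positive constant, this gives $\tfrac1n\log V_n(t,\lambda)\to0$. Writing $s=t+\lambda$ and using the branching identity $W(s)=\sum_{u\in\mathbb T_n}\tilde X_u(s)W(u,s)$, one has the exact identity $V_n(t,\lambda)-W(t+\lambda)=\sum_{u\in\mathbb T_n}\tilde X_u(s)\big(W(u,t)-W(u,s)\big)$. Conditionally on $\mathcal F_n$ the summands $\{W(u,t)-W(u,s)\}_{u\in\mathbb T_n}$ are i.i.d., and since $\mathbb E_\omega W_n(z)=1$ for every $z$ and environment $\omega$ while $W_n\to W$ in $L^1$ by Theorem \ref{Conver1.1}, their common conditional mean is $\mathbb E_{T^n\xi}W(t)-\mathbb E_{T^n\xi}W(s)=0$. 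A conditional Burkholder inequality (as in the proof of Theorem \ref{Conver1.1}) then gives
$$\big(\mathbb E_\xi\big[\,|V_n(t,\lambda)-W(t+\lambda)|^p\ \big|\ \mathcal F_n\big]\big)^{1/p}\le C\Big(\prod_{i=0}^{n-1}\tilde\beta_i\Big)W_n(p\,\mathrm{Re}\,s)^{1/p}\,\big(\mathbb E_{T^n\xi}|W(t)-W(s)|^p\big)^{1/p},$$
with $\tilde\beta_i:=\sup_{z\in D(t_0,\varepsilon)}m_i(p\,\mathrm{Re}\,z)^{1/p}/|m_i(z)|$; Lemma \ref{LDMAl5} (which uses $t_0\in I$) gives $\mathbb E\log\tilde\beta_0<0$ for $\varepsilon$ small, while $\big(\mathbb E_{T^n\xi}|W(t)-W(s)|^p\big)^{1/p}\le 2\sup_{z\in D(t_0,\varepsilon)}\big(\mathbb E_{T^n\xi}|W(z)|^p\big)^{1/p}$. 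By Lemma \ref{LDMAl7} the series $\sum_n\big(\prod_{i<n}\tilde\beta_i\big)\sup_z\big(\mathbb E_{T^n\xi}|W(z)|^p\big)^{1/p}$ converges a.s.; combined with the boundedness of $\{W_n(p\,\mathrm{Re}\,s)\}$ and a Cauchy-integral estimate in the jointly analytic variables $(t,s)$ (to pass from fixed $(t,\lambda)$ to the supremum over $B(t_0,\varepsilon_0)\times B(0,\varepsilon_0)$), one obtains $\sum_n\sup_{t,\lambda}\mathbb E_\xi\big[\,|V_n(t,\lambda)-W(t+\lambda)|^p\ \big|\ \mathcal F_n\big]<\infty$ a.s. A conditional Borel--Cantelli argument then yields the uniform a.s.\ convergence $V_n(t,\lambda)\to W(t+\lambda)$.

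Finally, $s\mapsto W(s)$ is continuous on $\overline{B(t_0,2\varepsilon_0)}$, being a uniform limit of the continuous $W_n$, and is a.s.\ bounded away from $0$ there: $\mathbb E_\xi W(s)=1$ forces $W(s)>0$ a.s.\ for each fixed $s$, and a Galton--Watson $0$--$1$ argument — using (\ref{ASS}), so that $|\mathbb T_n|\to\infty$, together with the fact that a common zero $s_*$ in the compact set would force $W(u,s_*)=0$ for all $u\in\mathbb T_n$ and all $n$ — upgrades this to $\inf_{s\in\overline{B(t_0,2\varepsilon_0)}}W(s)>0$ a.s. Consequently $\tfrac1n\log V_n(t,\lambda)\to0$ uniformly, and adding the two summands gives (\ref{LDMDS3p3e1}). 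I expect the main difficulty to lie in the second summand: simultaneously handling the conditional $p$-th moment bound, the Cauchy-integral reduction to a uniform estimate in $(t,s)$, and the almost-sure summability of the resulting random series over the stationary ergodic environment; by comparison the uniform ergodic theorem for the first summand and the uniform positivity of $W$ are more routine.
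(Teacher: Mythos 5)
Your decomposition $L_n(t,\lambda)=\tfrac1n\log\tfrac{P_n(t+\lambda)}{P_n(t)}+\tfrac1n\log V_n(t,\lambda)$ with $V_n(t,\lambda)=\sum_{u\in\mathbb T_n}\tilde X_u(t+\lambda)W(u,t)$ is exactly the one the paper uses. The one genuine difference in route is your treatment of $V_n$: you pin down its limit as $W(t+\lambda)$ by the identity $V_n(t,\lambda)-W(t+\lambda)=\sum_{u\in\mathbb T_n}\tilde X_u(t+\lambda)\bigl(W(u,t)-W(u,t+\lambda)\bigr)$ and a Burkholder bound, whereas the paper never identifies the limit; it telescopes, proves $\sum_n\sup(\mathbb E_\xi|V_{n+1}-V_n|^p)^{1/p}<\infty$ a.s.\ via the same $\beta_n,\tilde\gamma_n$ machinery and Lemma \ref{LDMAl7}, and then only uses that the uniform $L^1$-limit $V(t,\lambda)$ is positive. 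Your version is arguably more transparent (it explains where $V_n$ converges to), and it is made legitimate by the fact that Theorem \ref{Conver1.1} gives $L^p$ (hence $L^1$) convergence on a neighbourhood of $t_0$, so $\mathbb E_{T^n\xi}W(\cdot)\equiv1$ there.

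However, the way you implement the estimate has a technical gap. By conditioning on $\mathcal F_n$ rather than taking the full $\mathbb E_\xi$-expectation, Burkholder leaves you with the random factor $W_n(p\,\mathrm{Re}\,s)$, and your final display has the supremum over $(t,\lambda)$ \emph{outside} the conditional expectation, i.e.\ $\sum_n\sup_{t,\lambda}\mathbb E_\xi[\,|V_n-W|^p\mid\mathcal F_n]<\infty$. A conditional Borel--Cantelli/L\'evy argument cannot then be invoked: the quantity $\sup_{t,\lambda}|V_n(t,\lambda)-W(t+\lambda)|$ is not $\mathcal F_{n+1}$-measurable (it depends on the whole future of the tree through $W$), and with the supremum outside the conditional expectation you do not even have the conditional Chebyshev bound pointwise in $\omega$. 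The clean fix, which is what the paper does, is to take the unconditional $\mathbb E_\xi$-expectation so that the nuisance factor disappears ($\mathbb E_\xi W_n(p\,\mathrm{Re}\,s)=1$), then use Cauchy's formula and Jensen to pass from $\sup_{z,z'}\mathbb E_\xi|{\cdot}|^p$ on a slightly larger complex polydisc to $\mathbb E_\xi\sup_{t,\lambda}|{\cdot}|^p$ on the smaller real one, and finally apply the ordinary (quenched) Borel--Cantelli. With that correction your argument goes through. One more small point: your uniform positivity claim for $W$ on $\overline{B(t_0,2\varepsilon_0)}$ (via a zero-one/branching argument) is the same assertion the paper makes tersely as ``$V(t,\lambda)>0$''; both write-ups leave the same amount of detail to the reader.
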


\begin{proof}
For $(z,z')\in\mathbb C^d\times \mathbb C^d$, denote
$$V_n(z,z')=\sum_{u\in\mathbb T_n}\tilde X_u(z+z')W(u,z)$$
when $W(u,z):=\lim_n W_n(u,z)$ exists. In particular, $V_n(t,z')$ is always well defined for $(t,z')\in \mathbb R^d\times \mathbb C^d$, as $W(u,t)$ is always exists for all $t\in\mathbb R^d$. Calculate that for $(t,\lambda)\in \mathbb R^d\times \mathbb R^d$,
\begin{eqnarray}\label{LDMDS3p3e2}
L_n(t,\lambda)
=\frac{1}{n}\log V_n(t,\lambda)+\frac{1}{n}\log P_n(t+\lambda)-\frac{1}{n}\log P_n(t).
\end{eqnarray}
Hence to obtain (\ref{LDMDS3p3e1}), it suffices to show that $V_n(t,\lambda)$ converges uniformly a.s. and in $\mathbb P_\xi$-$L^1$ on $B(t_0,\varepsilon_0)\times B(0,\varepsilon_0)$ for some $\varepsilon_0>0$, which ensures that a.s., the limit
$$V(t,\lambda)=\lim_{n\rightarrow\infty}V_n(t,\lambda)>0\qquad\text{on}\quad   B(t_0,\varepsilon_0)\times B(0,\varepsilon_0),$$
and so (\ref{LDMDS3p3e1}) holds by letting $n\rightarrow\infty$ in (\ref{LDMDS3p3e2}).

Now we work on the uniform convergence (a.s. and in $\mathbb P_\xi$-$L^1$) of  $V_n(t,\lambda)$. By Theorem \ref{Conver1.1}, we see that
there exists $\delta>0$ such that $V_n(z,z')$ is well defined and analytic on $(z,z')\in D(t_0,\delta)\times\mathbb C^d$. Similarly to the proof of
Theorem \ref{Conver1.1}, we can turn to show that
\begin{equation}\label{LDMDS3p3e3}
\sum_n \sup_{\substack {z\in D(t_0,\varepsilon)\\z'\in D(0,\varepsilon)}}\left(\mathbb E_\xi|V_{n+1}(z,z')-V_n(z,z')|^p\right)^{1/p}<\infty\quad a.s.
\end{equation}
for suitable $0<\varepsilon\leq\delta$ and $p\in(1,2]$. Observe that
$$V_{n+1}(z,z')-V_n(z,z')=\sum_{u\in\mathbb T_n}\tilde {X}_u(z+z')\left(\sum_{ui\in\mathbb T_1(u)}\frac{e^{\langle z+z',L_i(u)\rangle}}{m_n(z+z')}W(ui,z)-W(u,z)\right).$$
Using Burkholder's inequality, we deduce
\begin{equation}\label{LDMDS3p3e4}
\mathbb E_\xi|V_{n+1}(z,z')-V_n(z,z')|^p
\leq C \frac{P_n(p(x+x'))}{|P_n(z+z')|^p}\mathbb E_{T^n\xi}|V_1(z,z')-W(z)|^p.
\end{equation}
By Minkowski's inequality, we have
\begin{equation}\label{LDMDS3p3e5}
\left(\mathbb{E}_\xi\left|V_1(z,z')-W(z)\right|^p\right)^{\frac{1}{p}}
\leq\left(\mathbb{E}_\xi\left|V_1(z,z')\right|^p\right)^{\frac{1}{p}}+\left(\mathbb{E}_\xi|W(z)|^p\right)^{\frac{1}{p}}
\end{equation}
and
\begin{eqnarray}\label{i}
\left(\mathbb{E}_\xi\left|V_1(z,z')\right|^p\right)^{\frac{1}{p}}
&=&\left(\mathbb E_\xi\left|\sum_{u\in \mathbb T_1}\tilde X_1(z+z')\left(W(u,z)-\mathbb{E}_\xi W(u,z)+\mathbb{E}_\xi W(u,z)\right)\right|^p\right)^{1/p}\nonumber\\
&\leq&\left(\mathbb E_\xi\left|\sum_{u\in \mathbb T_1}\tilde X_1(z+z')\left(W(u,z)-\mathbb{E}_\xi W(u,z)\right)\right|^p\right)^{1/p}\nonumber\\
&&+\left(\mathbb E_\xi\left|W_1(z+z')\right|^p\right)^{1/p}|\mathbb{E}_{T\xi} W(z)|.
\end{eqnarray}
Burkholder's inequality yields
\begin{eqnarray}\label{LDMDS3p3e7}
\mathbb E_\xi\left|\sum_{u\in \mathbb T_1}\tilde X_1(z+z')\left(W(u,z)-\mathbb{E}_\xi W(u,z)\right)\right|^p
\leq C\frac{m_0(p(x+x'))}{|m_0(z+z')|^p}\mathbb{E}_{T\xi}| W(z)|^p.
\end{eqnarray}
Combing (\ref{LDMDS3p3e4})-(\ref{LDMDS3p3e7}), we obtain
\begin{eqnarray*}\label{LDMDS3p3e8}
&&\sup_{\substack {z\in D(t_0,\varepsilon)\\z'\in D(0,\varepsilon)}}\left(\mathbb E_\xi|V_{n+1}(z,z')-V_n(z,z')|^p\right)^{1/p}\nonumber\\
&\leq&C\left(\sup_{z\in D(t_0,2\varepsilon)}\frac{P_{n+1}(px)^{1/p}}{|P_{n+1}(z)|}\sup_{z\in D(t_0,2\varepsilon)}(\mathbb E_{T^{n+1}\xi}|W(z)|^p)^{1/p}\right.\nonumber\\
&&+\sup_{z\in D(t_0,2\varepsilon)}\frac{P_n(px)^{1/p}}{|P_n(z)|}\sup_{z\in D(t_0,2\varepsilon)}(\mathbb E_{T^n\xi}|W_1(z)|^p)^{1/p}
\sup_{z\in D(t_0,2\varepsilon)}\mathbb E_{T^{n+1}\xi}|W(z)|\nonumber\\
&&+\left.\sup_{z\in D(t_0,2\varepsilon)}\frac{P_n(px)^{1/p}}{|P_n(z)|}\sup_{z\in D(t_0,2\varepsilon)}(\mathbb E_{T^n\xi}|W(z)|^p)^{1/p}\right).
\end{eqnarray*}
Thus
\begin{eqnarray}\label{LDMDS3p3e9}
&&\sum_n\sup_{\substack {z\in D(t_0,\varepsilon)\\z'\in D(0,\varepsilon)}}\left(\mathbb E_\xi|V_{n+1}(z,z')-V_n(z,z')|^p\right)^{1/p}\nonumber\\
&\leq&C\left(\sum_n \beta_0\cdots\beta_{n-1}\sup_{z\in D(t_0,2\varepsilon)}(\mathbb E_{T^n\xi}|W(z)|^p)^{1/p}\right.+\left.\sum_n \beta_0\cdots\beta_{n-1}\tilde\gamma_n
\sup_{z\in D(t_0,2\varepsilon)}\mathbb E_{T^{n+1}\xi}|W(z)|\right),\nonumber\\
\end{eqnarray}
where $\beta_n$ is defined in (\ref{LDMAS3pt1ee}) with $\varepsilon$ replaced by $2\varepsilon$, and $\tilde\gamma_n=\underset{z\in D(t_0,2\varepsilon)}\sup(\mathbb E_{T^n\xi}|W_1(z)|^p)^{1/p}$. Lemmas \ref{LDMAl4} and \ref{LDMAl5} ensure that  $\mathbb E\log\beta_0<0$ and $\mathbb E\log^+\tilde\gamma_0<\infty$ for suitable  $0<\varepsilon\leq\delta$ and $p\in(1,2]$. Therefore, the first series in  right hand side of the inequality (\ref{LDMDS3p3e9}) above converges a.s. by Lemma \ref{LDMAl7}. For the second series, notice that
$$\limsup_{n\rightarrow\infty}\frac{1}{n}\log \left(\beta_0\cdots\beta_{n-1}\tilde \gamma_n\right)\leq\mathbb E\log\beta_0<0,$$
which implies that $\beta_0\cdots\beta_{n-1}\tilde \gamma_n<a^{-n}$ a.s. for some constant $a>1$ as $n$ large enough.
It follows from Lemma \ref{LDMAl7} that the series $\sum_na^{-n}\underset{z\in D(t_0,2\varepsilon)}\sup\mathbb E_{T^{n+1}\xi}|W(z)|<\infty$ a.s., which implies the a.s. convergence of the second series in the right hand side of (\ref{LDMDS3p3e9}).
\end{proof}

Following the proof of (\cite{na}, Corollary 2.5), Proposition \ref{LDMDS3p3} leads to the proposition below.

\begin{pr}\label{LDMDS3p1}
With probability $1$, for all $t\in\Omega$,
$$\lim_{n\rightarrow\infty}\frac{S_{u|n}}{n}=\nabla\Lambda(t)\quad\text{
for $\mu_t$-a.s. $u\in\partial \mathbb T$.}$$
\end{pr}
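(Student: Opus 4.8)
The plan is to deduce Proposition~\ref{LDMDS3p1} from Proposition~\ref{LDMDS3p3} by a standard Chernoff-type (exponential Chebyshev) argument applied to the measures $\nu_{t,n}$, exactly along the lines of (\cite{na}, Corollary 2.5), taking care that the estimate must hold \emph{simultaneously} for all $t$ in a neighbourhood. First I would fix $t_0\in\Omega$ and use Proposition~\ref{LDMDS3p3} to produce $\varepsilon_0>0$ such that, with probability $1$, the identity $L_t(\lambda)=\Lambda(t+\lambda)-\Lambda(t)$ holds for all $t\in B(t_0,\varepsilon_0)$ and all $\lambda\in B(0,\varepsilon_0)$. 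Since $\Omega$ is open (being the intersection of $I$, $\Omega_1$, $\Omega_2$, and $\Omega_\Lambda=\mathbb R^d$, all open) and $\mathbb R^d$ is Lindel\"of, a countable subcover reduces the problem to a fixed $t_0$, and within $B(t_0,\varepsilon_0)$ it suffices to work at a fixed $t$ since the maps $t\mapsto \lim_n S_{u|n}/n$ (where defined) and $t\mapsto\nabla\Lambda(t)$ are continuous and $\nabla\Lambda$ is determined on a countable dense set.

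For fixed $t$, let $\alpha=\nabla\Lambda(t)$. I would show $\nu_{t,n}$ concentrates near $\alpha$ by bounding, for each coordinate direction and each sign, the $\nu_{t,n}$-mass of a half-space. Concretely, for a unit vector $e$ and $\delta>0$, Markov's inequality with the exponential weight gives, for small $\lambda>0$,
\begin{equation*}
\nu_{t,n}\big(\{x:\langle e,x-\alpha\rangle\geq\delta\}\big)\leq e^{-n\lambda\delta}\int e^{n\langle\lambda e,\,x-\alpha\rangle}\,d\nu_{t,n}(x)=e^{-n(\lambda\delta+\lambda\langle e,\alpha\rangle)}e^{nL_n(t,\lambda e)}.
\end{equation*}
Taking $\limsup$ in $n$ and invoking Proposition~\ref{LDMDS3p3}, the exponential rate is at most $-\lambda\delta-\lambda\langle e,\alpha\rangle+\Lambda(t+\lambda e)-\Lambda(t)$; since $\nabla_\lambda\big(\Lambda(t+\lambda e)-\Lambda(t)\big)\big|_{\lambda=0}=\langle e,\nabla\Lambda(t)\rangle=\langle e,\alpha\rangle$, the function $\lambda\mapsto \Lambda(t+\lambda e)-\Lambda(t)-\lambda\langle e,\alpha\rangle$ vanishes to first order at $0$, so it is $o(\lambda)$, and for $\lambda$ small enough the whole rate is $\leq -\lambda\delta/2<0$. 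Hence $\sum_n\nu_{t,n}(\{\langle e,x-\alpha\rangle\geq\delta\})<\infty$, and similarly for $-e$; running $e$ over a basis of $\mathbb R^d$ and using Borel--Cantelli for the (finite, nonnegative, total mass $W(t)$) measures $\nu_{t,n}$, we get that for $\mu_t$-a.e.\ $u$, $\|S_{u|n}/n-\alpha\|<\delta$ eventually. Intersecting over a sequence $\delta\downarrow0$ (and over the countable family of $t$'s and the countable subcover) yields $\lim_n S_{u|n}/n=\nabla\Lambda(t)$ for $\mu_t$-a.e.\ $u$, simultaneously for all $t\in\Omega$, with probability $1$.

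The main obstacle is the uniformity in $t$: Proposition~\ref{LDMDS3p3} only gives the value of $L_t(\lambda)$ on a single random neighbourhood $B(t_0,\varepsilon_0)$ whose size may depend on the realization, so I must be careful that the null set where the conclusion fails does not depend on $t$. This is handled by the openness of $\Omega$ together with a deterministic countable cover $\Omega=\bigcup_k B(t_k,\varepsilon_k)$ with $B(t_k,2\varepsilon_k)\subset\Omega$, applying Proposition~\ref{LDMDS3p3} at each $t_k$, and then using continuity of both $t\mapsto\nabla\Lambda(t)$ and the limiting position functional to pass from a countable dense set of $t$'s to all of $\Omega$; one must also note that the $\mu_t$-null exceptional set of $u$'s can be chosen jointly measurable in $(t,u)$ so that Fubini-type reasoning over the countable family is legitimate. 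Everything else is the routine Chernoff bound plus Borel--Cantelli, exactly as in Attia's deterministic-environment argument, the only new feature being that the limit $L_t(\lambda)=\Lambda(t+\lambda)-\Lambda(t)$ already incorporates the averaged (annealed) log-generating function supplied by Proposition~\ref{LDMDS3p3}.
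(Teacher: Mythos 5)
Your proposal is correct and follows essentially the same route as the paper, which simply invokes the argument of Attia (\cite{na}, Corollary 2.5): an exponential Chebyshev (Chernoff) bound on $\nu_{t,n}$ using the limit $L_t(\lambda)=\Lambda(t+\lambda)-\Lambda(t)$ from Proposition~\ref{LDMDS3p3}, the differentiability of $\Lambda$ at $t$ to get a strictly negative rate, and Borel--Cantelli for the finite measures $\mu_t$, with openness of $\Omega$ and a countable cover supplying the uniformity in $t$. One small overcaution worth noting: in Proposition~\ref{LDMDS3p3} the radius $\varepsilon_0$ is deterministic (chosen before the probability is taken, not realization-dependent), so the reduction to a countable dense set of $t$'s inside each ball is unnecessary --- once the almost-sure event of Proposition~\ref{LDMDS3p3} is fixed, the Markov/Borel--Cantelli step can be run pointwise for every $t$ in the ball on that same event --- but this does not affect the validity of your argument.
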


\medskip
\begin{proof}[Proof of Theorem \ref{LS1.4TT1}]
Letting $n$ tends to infinity in (\ref{LDMDS4em}) and using Propositions \ref{LDMDS3p2} and \ref{LDMDS3p1}, we immediately obtain the desired result.
\end{proof}

\medskip
We will use Theorem  \ref{LS1.4TT1} to calculate the lower bounds of the Hausdorff dimensions $\dim E(\alpha)$,  so as to further achieve the proof of  Theorem \ref{Conver1.12ma}.

\begin{proof}[Proof of Theorem \ref{Conver1.12ma}]
 By Propositions \ref{LDMDS3l1} and \ref{LDMDSSpp2},  with probability $1$, we have $\dim E(\alpha)\leq -\tilde \Lambda^*(\alpha)\leq - \Lambda^*(\alpha)$ for all $\alpha\in\mathbb R^d$. On the other hand, Proposition \ref{LDMDS3p1} shows that with probability $1$, for all $\alpha\in\mathcal J$ (so that $\alpha=\nabla\Lambda(t)$ for some $t\in\Omega$), $0<\mu_t(u\in\partial\mathbb T:\lim_{n}\frac{S_{u|n}}{n}=\alpha)$ ($<\infty$).
Noticing Theorem \ref{LS1.4TT1} and using (\cite{fal}, Theorem 4.2), we deduce $\dim E(\alpha)\geq \Lambda(t)-\langle t,\nabla\Lambda(t)\rangle = - \Lambda^*(\alpha)$.
\end{proof}

\section{Proof of Corollary \ref{Conver1.12cma}}\label{LDMDSS3}
In order to give the proof of Corollary \ref{Conver1.12cma}, we need a technique of truncation. Letting $a$ be a positive rational number, we  introduce the point process related to $u\in\mathbb T$ as $X^a(u)=(N^a(u), L^a_{1}(u), L^a_{2}(u), \cdots)$, where $N^a(u)=N(u)\wedge a$ with notation $a_1\wedge a_2=\min(a_1,a_2)$, $L^a_{i}(u)$ equals to $L_i(u)$ if $\|L_i(u)\|\leq a$ and is empty otherwise. Let us construct a new BRWRE
where the point process formed by a particle $u$ is $X^a(u)$. Denote
$$m_0^a(z)=\mathbb{E}_\xi\sum^{N\wedge a}_{i=1}e^{\langle z,{L_i^a}\rangle}=\mathbb{E}_\xi\sum^{N\wedge a}_{i=1}e^{\langle z,{L_i}\rangle}\mathbf{1}_{\{\|L_i\|\leq a\}}
\qquad (z\in\mathbb C^d),$$
and the other notations can be extended similarly. If $\mathbb E \left[\mathbb P_\xi (\|L_1\|\leq a_0)^{-1}\right]<\infty$ for some constant $a_0>0$, it is not difficult to verify that for all $t\in\mathbb R^d$ and $a\geq a_0$,
$$\mathbb E|\log m_0^a(t)|<\infty\quad\text{and}\quad \mathbb E\left\|\frac{\nabla m_0^a(t)}{m_0^a(t)}\right\|<\infty ,$$
which ensures that the function $\Lambda_a(t)=\mathbb E\log m^a_0(t)$ is well defined as real number on $\mathbb R^d$ and  differential everywhere. Indeed, notice that $\log^+ m_0^a(t)\leq \log^+ m_0(t)$ and for $a\geq a_0$,
\begin{eqnarray}\label{LDMAPC1}
m_0^a(t)\geq \mathbb E_\xi e^{\langle t, L_1\rangle}\mathbf{1}_{\{\|L_1\|\leq a\}}\geq \mathbb E_\xi e^{\langle t, L_1\rangle}\mathbf{1}_{\{\|L_1\|\leq a_0\}}\geq  e^{-\|t\|a_0}\mathbb P_\xi(\|L_1\|\leq a_0),
\end{eqnarray}
so that
\begin{eqnarray*}\label{LDMAPCE2}
\label{LDMAPCE1}\log^- m_0^a(t)\leq \|t\|a_0-\log \mathbb P_\xi (\|L_1\|\leq a_0)=\|t\|a_0+\log\left[ \mathbb P_\xi (\|L_1\|\leq a_0)^{-1}\right].
\end{eqnarray*}
The fact that $\mathbb E\log^+ m_0(t)<\infty$ and (by Jensen's inequality)
\begin{equation}\label{LDMAPCE3}
\mathbb E\log \left[ \mathbb P_\xi (\|L_1\|\leq a_0)^{-1}\right]\leq \log \mathbb E\left[ \mathbb P_\xi (\|L_1\|\leq a_0)^{-1}\right]<\infty
\end{equation}
ensures $\mathbb E|\log m_0^a(t)|<\infty$, and also implies that $\Lambda_a(t)\uparrow \Lambda(t)$ as $a\uparrow\infty$. Besides, by (\ref{LDMAPC1}), we can  deduce
$$\left\|\frac{\nabla m_0^a(t)}{m_0^a(t)}\right\|\leq\frac{\mathbb E_\xi \sum_{i=1}^{N\wedge a}e^{\langle t, L_i\rangle}\|L_i\|\mathbf{1}_{\{\|L_i\|\leq a\}}}{e^{-\|t\|a_0}\mathbb P_\xi(\|L_1\|\leq a_0)}\leq a^2 e^{(a-a_0)\|t\|}\mathbb P_\xi (\|L_1\|\leq a_0)^{-1},$$
so $\mathbb E\left\|\frac{\nabla m_0^a(t)}{m_0^a(t)}\right\|<\infty$ since $\mathbb E[\mathbb P_\xi (\|L_1\|\leq a_0)^{-1}]<\infty$.


From the above, we see that $\Lambda_a(t)\uparrow \Lambda(t)$ as $a\uparrow\infty$ as soon as $\mathbb E\log\mathbb P_\xi(\|L_i\|\leq a_0)>-\infty$ for some constant $a_0>0$. According to the arguments in (\cite{na2014}, proofs of Proposition 2.3 and Corollary 2.1), we can get the following lemma, which is a generalization of (\cite{na2014}, Corollary 2.1)  for $\mathbb R^d$-valued BRWRE.
\begin{lem}\label{LDMDS4L2}
Assume that $\mathbb E\log\mathbb P_\xi(\|L_i\|\leq a_0)>-\infty$ for some constant $a_0>0$. If $\alpha\in \emph{int}\{\alpha: \Lambda^*(\alpha)<\infty\}$, then $\Lambda^*_a(\alpha)\downarrow \Lambda^*(\alpha)$ as $a\uparrow\infty$.
\end{lem}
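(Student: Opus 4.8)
The plan is to work at the level of the convex functions $\Lambda_a$ and $\Lambda$ and pass to Legendre transforms twice, rather than estimating $\Lambda_a^*(\alpha)$ directly; note that inserting an optimal $t$ for $\Lambda^*(\alpha)$ into the definition of $\Lambda_a^*(\alpha)$ only yields $\Lambda_a^*(\alpha)\ge\Lambda^*(\alpha)$, which is the wrong inequality.

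First I would record the properties needed. As observed in the paragraph preceding the lemma, $\mathbb E\log\mathbb P_\xi(\|L_1\|\le a_0)>-\infty$ forces, for every $a\ge a_0$, that $\Lambda_a(t)=\mathbb E\log m_0^a(t)$ is finite for all $t\in\mathbb R^d$ (positive part: $\log^+ m_0^a\le\log^+ m_0$; negative part: the bound on $\log^- m_0^a$ deduced from (\ref{LDMAPC1})). Being convex and finite on $\mathbb R^d$, $\Lambda_a$ is continuous, hence a proper closed convex function, so $\Lambda_a^{**}=\Lambda_a$; likewise $\Lambda^{**}=\Lambda$ by the standing assumption $\Omega_\Lambda=\mathbb R^d$. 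Moreover $\Lambda_a$ is nondecreasing in $a$ with $\Lambda_a\uparrow\Lambda$ pointwise, which is already established in the excerpt. Dualising gives $\Lambda_a^*\ge\Lambda^*$ for every $a$ and $a\mapsto\Lambda_a^*$ nonincreasing, so the pointwise limit $g:=\lim_{a\uparrow\infty}\Lambda_a^*$ exists in $(-\infty,+\infty]$, is convex (a decreasing limit of convex functions is convex), and satisfies $g\ge\Lambda^*$.

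The heart of the argument is to show $g^*=\Lambda$. From $g\le\Lambda_a^*$ one gets $g^*\ge(\Lambda_a^*)^*=\Lambda_a^{**}=\Lambda_a$ for every $a$, and letting $a\uparrow\infty$ with $\Lambda_a\uparrow\Lambda$ yields $g^*\ge\Lambda$; from $g\ge\Lambda^*$ one gets $g^*\le(\Lambda^*)^*=\Lambda^{**}=\Lambda$. Hence $g^*=\Lambda$, so $g^{**}=\Lambda^*$, and since $g$ is (proper) convex, $g^{**}=\operatorname{cl}g$; thus $\operatorname{cl}g=\Lambda^*$. I would then invoke the standard fact that a convex function agrees with its closure on the interior of its domain. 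From $g\ge\Lambda^*$ one has $\operatorname{dom}g\subset\operatorname{dom}\Lambda^*$, while $\operatorname{cl}g=\Lambda^*$ gives $\operatorname{dom}\Lambda^*\subset\operatorname{cl}(\operatorname{dom}g)$; hence $\operatorname{dom}g$ and $\operatorname{dom}\Lambda^*$ have the same interior, so $\alpha\in\operatorname{int}\{\alpha:\Lambda^*(\alpha)<\infty\}$ also lies in $\operatorname{int}(\operatorname{dom}g)$. Therefore $g(\alpha)=(\operatorname{cl}g)(\alpha)=\Lambda^*(\alpha)$, in particular finite, and combining this with the monotone convergence $\Lambda_a^*(\alpha)\downarrow g(\alpha)$ gives $\Lambda_a^*(\alpha)\downarrow\Lambda^*(\alpha)$.

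I expect the main obstacle to be essentially conceptual rather than computational: one must resist trying to bound $\Lambda_a^*(\alpha)$ by hand and instead route the comparison through the identity $g^*=\Lambda$, whose only genuinely analytic ingredient is the pointwise monotone convergence $\Lambda_a\uparrow\Lambda$ borrowed from the excerpt (this is what upgrades $g^*\ge\Lambda$ to an equality). The remaining delicate point is the final step, where one needs $\alpha$ to sit in the \emph{interior} of $\operatorname{dom}\Lambda^*$ rather than on its boundary, since $g$ and $\operatorname{cl}g=\Lambda^*$ may disagree there; this is exactly where the hypothesis $\alpha\in\operatorname{int}\{\alpha:\Lambda^*(\alpha)<\infty\}$ is used in an essential way. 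A more hands-on alternative --- uniform convergence of $\Lambda_a$ to $\Lambda$ on compact sets together with a uniform bound on the location of the maximiser defining $\Lambda_a^*(\alpha)$ for $\alpha$ interior to the domain --- would also work, but it requires localising that maximiser, which the duality argument sidesteps.
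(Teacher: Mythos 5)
Your argument is correct, and it is genuinely self-contained where the paper is not: the paper proves $\Lambda_a\uparrow\Lambda$ pointwise and then simply cites Attia--Barral (proofs of Proposition~2.3 and Corollary~2.1 in \cite{na2014}) for the passage to Legendre transforms, giving no further detail. Your route via biconjugation is clean and every step checks out. The preliminaries hold: under $\mathbb E\log\mathbb P_\xi(\|L_1\|\le a_0)>-\infty$ the paper shows $\Lambda_a$ is finite on $\mathbb R^d$ for $a\ge a_0$, and since $m_0^a$ is log-convex in $t$ for each $\xi$, $\Lambda_a$ is finite convex, hence continuous and closed, so $\Lambda_a^{**}=\Lambda_a$; the standing assumption $\Omega_\Lambda=\mathbb R^d$ gives the same for $\Lambda$. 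Monotonicity $m_0^a\uparrow m_0$ gives $\Lambda_a\uparrow\Lambda$ and $\Lambda_a^*\downarrow g\ge\Lambda^*$. The two-sided squeeze $g^*\ge\sup_a\Lambda_a^{**}=\sup_a\Lambda_a=\Lambda$ and $g^*\le\Lambda^{**}=\Lambda$ is exactly right, and $g$ is proper convex (it dominates $\Lambda^*\ge-\Lambda(0)>-\infty$ and is not identically $+\infty$ since $g^*=\Lambda$ is finite), so $\operatorname{cl}g=g^{**}=\Lambda^*$. Finally you correctly invoke Rockafellar Theorem~7.4: $\operatorname{ri}(\operatorname{dom}g)=\operatorname{ri}(\operatorname{dom}\operatorname{cl}g)=\operatorname{ri}(\operatorname{dom}\Lambda^*)$ and $g=\operatorname{cl}g$ there, which (when $\operatorname{int}(\operatorname{dom}\Lambda^*)\ne\emptyset$, so $\operatorname{ri}=\operatorname{int}$) yields $g(\alpha)=\Lambda^*(\alpha)$ for $\alpha\in\operatorname{int}\{\Lambda^*<\infty\}$. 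You also identify the right subtlety: on the boundary of the domain $g$ and $\operatorname{cl}g$ can disagree, which is precisely why the hypothesis asks for an interior point. One minor presentational remark: you could note explicitly that nothing in this argument uses the differentiability of $\Lambda_a$ (that is only needed afterwards, in the proof of the corollary, to produce $t_\alpha$ with $\nabla\Lambda_a(t_\alpha)=\alpha$), so your proof runs under the lemma's stated weaker hypothesis $\mathbb E\log\mathbb P_\xi(\|L_1\|\le a_0)>-\infty$ alone, as it should.
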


\begin{lem}\label{LDMDS4L3}Assume that $\mathbb E\log\mathbb P_\xi(\|L_i\|\leq a_0)>-\infty$ for some constant $a_0>0$. Then $I_a=\Omega_a$ for $a\geq a_0$,
where $\Omega_a=I_a\bigcap\Omega_1^a\bigcap \Omega_2^a$.
\end{lem}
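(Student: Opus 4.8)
The plan is to establish the two nontrivial inclusions $I_a\subset\Omega_1^a$ and $I_a\subset\Omega_2^a$; since the reverse containment $\Omega_a=I_a\cap\Omega_1^a\cap\Omega_2^a\subset I_a$ holds by definition, these together give $\Omega_a=I_a$. In fact I will prove the stronger facts $\Omega_1^a=\mathbb{R}^d$ and $\Omega_2^a=\mathbb{R}^d$ for every $a\geq a_0$, so that the environment hypothesis $\mathbb{E}\log\mathbb{P}_\xi(\|L_1\|\leq a_0)>-\infty$ is used only for $\Omega_2^a$. (That $I_a$ itself is meaningful, i.e.\ $\Lambda_a(t)=\mathbb{E}\log m_0^a(t)$ is real-valued and differentiable on $\mathbb{R}^d$ under this hypothesis, follows exactly as in the discussion preceding Lemma \ref{LDMDS4L2}, using $\log^+m_0^a(t)\le\log a+a\|t\|$ and the lower bound \eqref{LDMAPC1}.)

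\emph{Step 1: $\Omega_1^a=\mathbb{R}^d$.} In the truncated BRWRE one has $N^a\leq a$ and $\|L_i^a\|\leq a$ deterministically, hence for every $t\in\mathbb{R}^d$ and every realization of $\xi$,
$$\tilde{Z}_1^a(t)=\sum_{i=1}^{N\wedge a}e^{\langle t,L_i^a\rangle}\leq a\,e^{a\|t\|}.$$
Therefore $\mathbb{E}_\xi\tilde{Z}_1^a(t)^p\leq a^p e^{pa\|t\|}$, so $\log^+\mathbb{E}_\xi\tilde{Z}_1^a(t)^p\leq p(\log a+a\|t\|)$ is a bounded quantity and $\mathbb{E}\log^+\mathbb{E}_\xi\tilde{Z}_1^a(t)^p<\infty$ for all $t$ and all $p>1$. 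Thus the set defining $\Omega_1^a$ is all of $\mathbb{R}^d$, and so is its interior.

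\emph{Step 2: $\Omega_2^a=\mathbb{R}^d$.} Fix $t\in\mathbb{R}^d$. Because $\|L_i^a\|\leq a$ for all $i$ and all $\xi$, there is a deterministic function $\rho_a$ with $\rho_a(\delta)\to0$ as $\delta\downarrow0$ such that $|e^{\langle\zeta-t,L_i^a\rangle}-1|\leq\rho_a(\delta)$ for every $i$ whenever $\zeta\in D(t,\delta)$. Writing $e^{\langle\zeta,L_i^a\rangle}=e^{\langle t,L_i^a\rangle}e^{\langle\zeta-t,L_i^a\rangle}$ gives, for $\zeta\in D(t,\delta)$,
$$|m_0^a(\zeta)-m_0^a(t)|\leq\mathbb{E}_\xi\sum_{i=1}^{N\wedge a}e^{\langle t,L_i^a\rangle}\,|e^{\langle\zeta-t,L_i^a\rangle}-1|\leq\rho_a(\delta)\,m_0^a(t).$$
Picking a deterministic $\delta=\delta(a)>0$ with $\rho_a(\delta)\leq\tfrac12$ yields $|m_0^a(\zeta)|\geq\tfrac12 m_0^a(t)$ on $D(t,\delta)$, hence $\alpha_0^a(t,\delta)\geq\tfrac12 m_0^a(t)$. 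Combining with the lower bound $m_0^a(t)\geq e^{-\|t\|a_0}\mathbb{P}_\xi(\|L_1\|\leq a_0)$ for $a\geq a_0$, exactly as in \eqref{LDMAPC1}, we get
$$\log^-\alpha_0^a(t,\delta)\leq\log 2+\|t\|\,a_0+\log\left[\mathbb{P}_\xi(\|L_1\|\leq a_0)^{-1}\right].$$
Taking expectations and using $\mathbb{E}\log\mathbb{P}_\xi(\|L_1\|\leq a_0)>-\infty$, equivalently $\mathbb{E}\log\left[\mathbb{P}_\xi(\|L_1\|\leq a_0)^{-1}\right]<\infty$ (as in \eqref{LDMAPCE3}), we conclude $\mathbb{E}\log^-\alpha_0^a(t,\delta)<\infty$, i.e.\ $t\in\Omega_2^a$. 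Since $t$ is arbitrary, $\Omega_2^a=\mathbb{R}^d$, whence $\Omega_a=I_a\cap\mathbb{R}^d\cap\mathbb{R}^d=I_a$.

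The only genuinely delicate point is the perturbation estimate in Step 2: one must be sure that the radius $\delta$ can be chosen independently of the environment $\xi$, so that the resulting lower bound on $\alpha_0^a(t,\delta)$ holds uniformly and the expectation can be taken. This is precisely where the truncation is essential — boundedness of the displacements $\|L_i^a\|\leq a$ by the fixed constant $a$ makes the modulus of continuity $\rho_a$ (and hence $\delta$) deterministic. The remaining verifications — the elementary inequality $|e^{\langle\zeta-t,L_i^a\rangle}-1|\leq\rho_a(\delta)$, keeping track of the paper's conventions for $\langle\cdot,\cdot\rangle$ and $\|\cdot\|$ — are routine.
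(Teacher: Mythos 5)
Your proposal is correct, and Step~1 (showing $\Omega_1^a=\mathbb{R}^d$ via $\tilde Z_1^a(t)\le a\,e^{a\|t\|}$) coincides exactly with the paper's argument. In Step~2 you and the paper both prove the stronger statement $\Omega_2^a=\mathbb{R}^d$, both rely on the truncation to obtain a deterministic radius $\delta$, and both invoke the lower bound \eqref{LDMAPC1} together with the moment hypothesis on $\mathbb{P}_\xi(\|L_1\|\le a_0)^{-1}$; but the mechanism for bounding $|m_0^a(\zeta)|$ from below on $D(t,\delta)$ is genuinely different. The paper writes $|m_0^a(z)|^2=(\mathrm{Re}\,m_0^a(z))^2+(\mathrm{Im}\,m_0^a(z))^2\ge\tfrac12(\mathrm{Re}\,m_0^a(z)+\mathrm{Im}\,m_0^a(z))^2$ and then uses the elementary fact that $h(\theta)=\cos\theta+\sin\theta\ge\tfrac12$ for $|\theta|$ small, applied to $\theta=\langle y,L_i\rangle$, to collapse the resulting expression to a positive multiple of $\mathbb{P}_\xi(\|L_1\|\le a_0)^2$. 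You instead treat $m_0^a(\zeta)$ as a perturbation of $m_0^a(t)$: factor $e^{\langle\zeta,L_i^a\rangle}=e^{\langle t,L_i^a\rangle}e^{\langle\zeta-t,L_i^a\rangle}$, use the uniform bound $|e^{\langle\zeta-t,L_i^a\rangle}-1|\le\rho_a(\delta)$ (valid because $\|L_i^a\|\le a$ deterministically), and conclude $|m_0^a(\zeta)|\ge(1-\rho_a(\delta))\,m_0^a(t)\ge\tfrac12 m_0^a(t)$ once $\rho_a(\delta)\le\tfrac12$. Your route is slightly more transparent and would generalize without change to any situation where the one-step transform is Lipschitz-perturbed by a bounded quantity; the paper's route exploits a specific trigonometric inequality tied to the real/imaginary decomposition. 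Both are correct and of comparable length, and both correctly identify the truncation as the point that makes $\delta$ environment-independent — the key observation either way.
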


\begin{proof}
We shall prove that $\Omega_1^a=\Omega_2^a=\mathbb R^d$ for $a\geq a_0$, 
which implies $I_a=\Omega_a$.
On the one hand, for $t\in \mathbb R^d$, one can see that
$$\mathbb E_\xi [\tilde{Z}^a_1(t)]^p=\mathbb E_\xi\left(\sum_{i=1}^{N\wedge a}e^{\langle t, L_i\rangle}\mathbf{1}_{\{\|L_i\|\leq a\}}\right)^p\leq a^pe^{pa\|t\|},$$
which implies that $\mathbb{E}\log^+\mathbb{E}_\xi[\tilde{Z}^a_1(t)]^p<\infty$, hence we have $\Omega_1^a=\mathbb R^d$ (since $\mathbb R^d$ is open).

On the other hand,  notice that the function $h(\theta)=\sin\theta+\cos\theta\geq\frac{1}{2}$ on $[-\delta_1,\delta_1]$ for some $\delta_1>0$. Take $\delta>0$ small enough such that  $a \sqrt{d}\delta\leq \delta_1$. For $t\in\mathbb R^d$ and $z=x+\mathbf{i}y\in D(t,\delta)$, we have $\|x\|\leq \|x-t\|+\|t\|<\sqrt{d}\delta+\|t\|$ and $\|y\|<\sqrt{d}\delta$. Therefore,  for $a\geq a_0$,
\begin{eqnarray*}
|m_0^a(z)|^2&=&\left(\mathbb E_\xi\sum^{N\wedge a}_{i=1}e^{\langle x,{L_i}\rangle}\cos\langle y,{L_i}\rangle\mathbf{1}_{\{\|L_i\|\leq a\}} \right)^2
+\left(\mathbb E_\xi\sum^{N\wedge a}_{i=1}e^{\langle x,{L_i}\rangle}\sin\langle y,{L_i}\rangle\mathbf{1}_{\{\|L_i\|\leq a\}} \right)^2\\
&\geq& \frac{1}{2}\left(\mathbb E_\xi\sum^{N\wedge a}_{i=1}e^{\langle x,{L_i}\rangle}h(\langle y,{L_i}\rangle)\mathbf{1}_{\{\|L_i\|\leq a\}} \right)^2\\
&\geq& \frac{1}{8}e^{-2a_0(\sqrt{d}\delta+\|t\|)} \mathbb P_\xi(\|L_i\|\leq a_0)^2,
\end{eqnarray*}
so that $$\mathbb E\log^-\alpha_0^a(t,\delta)\leq 2\sqrt{2}+{a_0(\sqrt{d}\delta+\|t\|)} -\mathbb E\log\mathbb P_\xi(\|L_i\|\leq a_0)<\infty.$$
Thus $ \Omega_2^a=\mathbb R^d$. The proof is finished.
\end{proof}

\begin{proof}[Proof of Corollary \ref{Conver1.12cma}]We only need to show that with probability $1$,  for all $\alpha\in \text{int}\tilde {\mathcal J}$, the lower bound of the Hausdorff dimension of the set $E(\alpha)$ satisfies $\dim E(\alpha)\geq -\Lambda^*(\alpha)$. Firstly, it is obvious that the tree $\mathbb T^a\subset \mathbb T$, thus
$$E_a(\alpha)=\{u\in\partial\mathbb T^a:\lim_{n\rightarrow\infty}\frac{S_{u|n}}{n}=\alpha\}  \subset E(\alpha),$$
which leads to $\dim E(\alpha)\geq \dim E_a(\alpha)$ for all $\alpha\in\mathbb R^d$.
Denote
$$\widetilde{\mathcal E}=\bigcap_a\{\omega\in\Theta^{\mathbb{N}}\times\Gamma: \dim E_a(\alpha)=\Lambda_a^*(\alpha),\quad \forall \alpha\in \mathcal J_a\}.$$
Then $\mathbb P(\widetilde{\mathcal E})=1$ by Theorem \ref{Conver1.12ma}. Fix $\omega\in \widetilde{\mathcal E}$. For $\alpha\in \text{int}\tilde {\mathcal J}$$\subset \text{int}\{\alpha:\Lambda^*(\alpha)<\infty\}$, by Lemma \ref{LDMDS4L2}, we have $\Lambda_a^*(\alpha)<0$ for $a$ large enough. Take  $a$ large enough.  Since $\Lambda_a(t)$ is differentiable,  there exists $t_\alpha\in I_a$ such that $\alpha=\nabla\Lambda_a(t_\alpha)$ (see \cite{r70}, p227).  Lemma \ref{LDMDS4L3} shows $I_a=\Omega_a$, so  that $\alpha\in \mathcal J_a$. Thus $\dim E(\alpha)\geq\dim E_a(\alpha)= -\Lambda_a^*(\alpha)$ for $a$ large enough. Letting $a\uparrow\infty$ gives $\dim E(\alpha)\geq -\Lambda^*(\alpha)$. Hence $\widetilde{\mathcal E}\subset\{\omega:\dim E(\alpha)\geq -\Lambda^*(\alpha),\;\;\forall\alpha\in \text{int}\tilde{\mathcal J}\}$.
\end{proof}
\section{Proof of Theorem \ref{Mdp1.4}}\label{LDMDS4}

We will prove  Theorem \ref{Mdp1.4} along the lines of the proof of (\cite{huang17}, Theorem 5.1) with details modified.
For $n\in\mathbb N$, set $\ell_n=\frac{1}{\pi_n}\mathbb E_\xi\sum\limits_{j=1}^{N(u)}L_j(u)$ with $u\in\mathbb T_n$ and $\mu_n=\sum\limits_{i=0}^{n-1}\ell_i$.
Consider the probability measures $q_n(\cdot)=\frac{\mathbb{E}_\xi Z_n(a_n\cdot+\mu_n)}{\mathbb{E}_\xi Z_n(\mathbb{R}^d)}$. For $t\in\mathbb R^d$, put
\begin{eqnarray}\label{16}
\lambda_n(t)=\log\int e^{\langle t,x\rangle}q_n(dx)
=\sum\limits_{i=0}^{n-1}\log\left[\frac{1}{\pi_i}m_i(a_n^{-1}t )e^{-\langle a_n^{-1}t,\;\ell_i\rangle}\right].
\end{eqnarray}

\begin{lem}\label{LDMDMDL1}If $\limsup_n\frac{a_n}{n^\beta}<\infty$ for some $\beta\in(\frac{1}{2},1)$ and $\mathbb E\frac{1}{\pi_0} \underset{u\in \mathbb T_1}{\Sigma}e^{\delta\|S_u\|}<\infty$  for some $\delta>0$,  then for each $t\in\mathbb{R}^d$,
\begin{equation}\label{17}
\underset{n\rightarrow\infty}{\lim}\frac{n}{a^2_n}\lambda_n\left(\frac{a^2_n}{n}t\right)=\Gamma(t)~~~~~\text{a.s.,}
\end{equation}
 where the   function $\Gamma(t)$ is defined in  Theorem \ref{Mdp1.4}.
\end{lem}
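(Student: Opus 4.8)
The plan is to compute the limit of $\frac{n}{a_n^2}\lambda_n\!\left(\frac{a_n^2}{n}t\right)$ term by term, using the Taylor expansion of each factor $\log\!\big[\frac{1}{\pi_i}m_i(a_n^{-1}\cdot\frac{a_n^2}{n}t)e^{-\langle a_n^{-1}\cdot\frac{a_n^2}{n}t,\ell_i\rangle}\big]=\log\!\big[\frac{1}{\pi_i}m_i(\frac{a_n}{n}t)\big]-\langle\frac{a_n}{n}t,\ell_i\rangle$ around the origin in the small parameter $s_n:=\frac{a_n}{n}t$, which tends to $0$ since $a_n=o(n)$ (as $\beta<1$). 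First I would introduce, for $u\in\mathbb T_1$ under the shifted environment $T^i\xi$, the normalized quantities and write
\[
\psi_i(s):=\log\Big[\tfrac{1}{\pi_i}m_i(s)\Big]-\langle s,\ell_i\rangle,
\]
noting $\psi_i(0)=0$, $\nabla\psi_i(0)=\frac{1}{\pi_i}\mathbb E_\xi\sum_{j}L_j - \ell_i = 0$ by the definition of $\ell_i$, and that the Hessian of $\psi_i$ at $0$ is exactly the conditional covariance matrix
\[
\mathbf{C}_i:=\mathbb E_{T^i\xi}\frac{1}{\pi_0}\sum_{u\in\mathbb T_1}\big(S_u-\mathbb E_{T^i\xi}\tfrac1{\pi_0}\textstyle\sum_u S_u\big)\big(S_u-\cdots\big)^{\!\top},
\]
whose $\mathbb P$-expectation is $\mathbf C$. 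Then by a second-order Taylor expansion with integral remainder,
\[
\psi_i(s_n)=\tfrac12\langle s_n,\mathbf C_i s_n\rangle + R_i(s_n),\qquad |R_i(s_n)|\le \|s_n\|^3\, G_i,
\]
where $G_i$ is a $\sigma(T^i\xi)$-measurable random variable controlling the third derivative of $m_i$ on a neighbourhood of $0$; the exponential-moment hypothesis $\mathbb E\frac1{\pi_0}\sum_{u\in\mathbb T_1}e^{\delta\|S_u\|}<\infty$ guarantees $\mathbb E G_0<\infty$ (differentiating under the expectation is licensed on $\{\|s\|<\delta/2\}$, say).

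Summing over $i=0,\dots,n-1$ and multiplying by $\frac{n}{a_n^2}$, and observing $s_n=\frac{a_n}{n}t$ so that $\frac{n}{a_n^2}\cdot\frac{a_n^2}{n^2}=\frac1n$, I get
\[
\frac{n}{a_n^2}\lambda_n\!\Big(\tfrac{a_n^2}{n}t\Big)=\frac1n\sum_{i=0}^{n-1}\tfrac12\langle t,\mathbf C_i t\rangle \;+\; \frac{n}{a_n^2}\sum_{i=0}^{n-1}R_i(s_n).
\]
The first sum converges a.s.\ to $\frac12\langle t,\mathbf C t\rangle=\Gamma(t)$ by Birkhoff's ergodic theorem applied to the stationary ergodic sequence $(\langle t,\mathbf C_i t\rangle)_{i\ge0}$ (which is integrable by the covariance hypothesis). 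For the remainder, $\big|\frac{n}{a_n^2}\sum_{i=0}^{n-1}R_i(s_n)\big|\le \frac{n}{a_n^2}\cdot\frac{a_n^3}{n^3}\|t\|^3\sum_{i=0}^{n-1}G_i=\frac{a_n}{n^2}\|t\|^3\sum_{i=0}^{n-1}G_i$; by the ergodic theorem $\frac1n\sum_{i<n}G_i\to\mathbb E G_0<\infty$ a.s., and since $\frac{a_n}{n}\to0$ (as $\limsup a_n/n^\beta<\infty$ with $\beta<1$), this bound tends to $0$ a.s. Hence the limit is $\Gamma(t)$.

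The main obstacle is making the remainder estimate fully rigorous and uniform: one must verify that $m_i(z)$ is, with probability one, finite and three-times differentiable on a fixed complex neighbourhood $D(0,\varepsilon)$ of the origin for \emph{all} $i$ simultaneously, with the third-order bound $G_i$ integrable and stationary. This is exactly where the assumption $0\in\Omega_1\cap\Omega_2$ and the exponential moment bound in (\ref{LDMDAS2}) enter: $\Omega_2$ gives $\mathbb E\log^-\alpha_0(0,\delta)<\infty$ so $|m_i(z)|$ is bounded away from $0$ on $D(0,\delta)$ in a uniformly integrable-log sense, and $\mathbb E\frac1{\pi_0}\sum_u e^{\delta\|S_u\|}<\infty$ controls $m_i$ and its derivatives from above. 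Once the analyticity and the integrable stationary domination $G_i$ are secured, the two ergodic-theorem applications and the bookkeeping above finish the proof; the mixing hypotheses on $\xi$ are not needed for this particular lemma (they will be used later to upgrade pointwise convergence of $\lambda_n$ to the full moderate deviation principle).
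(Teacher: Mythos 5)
Your plan --- Taylor-expand $\psi_i(s):=\log(m_i(s)/\pi_i)-\langle s,\ell_i\rangle$ to second order with an integrable third-derivative majorant $G_i$, then apply the ergodic theorem twice --- is sound but organizes the estimate genuinely differently from the paper. The paper sets $\Delta_{n,i}=\frac{m_i(s_n)}{\pi_i}e^{-\langle s_n,\ell_i\rangle}-1$, expands it as a power series $\sum_{k\ge2}\gamma^n_{ik}$ (the $k=1$ coefficient vanishing by centering), first proves $\sup_{i<n}|\Delta_{n,i}|<1$ for large $n$, and then works with $\log(1+\Delta_{n,i})=\sum_{j\ge1}(-1)^{j+1}\Delta_{n,i}^j/j$. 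Crucially, instead of requiring $\mathbb E G_0<\infty$, the paper splits each moment integral at $\|x\|\le\frac{4}{\epsilon}\log n$, uses the ergodic theorem on $(Q_i^{(\delta)})$ to obtain the almost-sure polynomial growth bound $\sup_{i<n}\bar Q_i^{(\epsilon)}\le Cn^2$, and cancels this against an $n^{-2}$ coming from the tail indicator; this costs a $(\log n)^3$ factor in the remainder, harmlessly absorbed by $\beta<1$, and avoids ever integrating the centered quantity $\bar Q_0^{(\epsilon)}$.

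The one genuine gap in your sketch is the justification of $\mathbb E G_0<\infty$ and of the lower bound on the denominator in $\nabla^3\log m_i$. You invoke $0\in\Omega_1\cap\Omega_2$, but that is a hypothesis of Theorem~\ref{Mdp1.4}, not of Lemma~\ref{LDMDMDL1}; and in any case $\mathbb E\log^-\alpha_0(0,\delta)<\infty$ is an averaged log-bound and does not give a pointwise lower bound on $m_i(s)$ uniformly in $i$. The bound you actually need is free of any extra assumption: $\phi_i(s):=\frac{m_i(s)}{\pi_i}e^{-\langle s,\ell_i\rangle}$ is the moment generating function of a \emph{centered} probability measure (the normalized law of the recentred displacements $\bar L_j(u)=L_j(u)-\ell_i$), so Jensen gives $\phi_i(s)\ge1$ for all $s$, and every denominator in $\nabla^3\psi_i=\nabla^3\log\phi_i$ is $\ge1$. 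For the numerator, $\sup_{\|s\|\le\varepsilon}\|\nabla^3\psi_i(s)\|$ is bounded up to constants by $(\bar Q_i^{(\epsilon)})^3$ with $\bar Q_i^{(\epsilon)}=\frac{1}{\pi_i}\mathbb E_\xi\sum_{j}e^{\epsilon\|\bar L_j(u)\|}$; the chain of Jensen inequalities $(\bar Q_0^{(\epsilon)})^3\le\bar Q_0^{(3\epsilon)}\le(Q_0^{(3\epsilon)})^2\le Q_0^{(6\epsilon)}\le Q_0^{(\delta)}$ then yields $\mathbb E G_0<\infty$ as soon as one chooses $\varepsilon<\epsilon\le\delta/6$. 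Once these two points are made explicit, your argument closes.
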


\begin{proof}
Put $\Delta_{n,i}=\frac{1}{\pi_i}m_i(\frac{a_n}{n}t)e^{-\langle  \frac{a_n}{n}t,\;\ell_i\rangle}-1$. We shall show that  for each $t\in\mathbb{R}^d$,
\begin{equation}\label{18}
\underset{0\leq i\leq n-1}{\sup}|\Delta_{n,i}|<1~~~~~\text{a.s.}
\end{equation}
for $n$   large enough.  Set $\bar L_i(u)=L_i(u)-{\ell_{|u|}}$
and denote
$\bar X_n(\cdot) = \sum\limits_{i=1}^{N(u)}   \delta_{\bar L_i (u)} (\cdot)$  ($u\in\mathbb T_n$)
the counting measure  corresponding to the random vector $X(u)$.
Let
$Q_n^{(\epsilon)}
=\frac{1}{\pi_n}\mathbb E_\xi\sum\limits_{i=1}^{N(u)}e^{\epsilon\|L_i(u)\|}$ and $\bar Q_n^{(\epsilon)}
=\frac{1}{\pi_n}\mathbb E_\xi\sum\limits_{i=1}^{N(u)}e^{\epsilon\|\bar L_i(u)\|}$  $(u\in\mathbb T_n)$. By the triangle inequality and Jensen's inequality,
we see that
$$\bar Q_n^{(\epsilon)}\leq Q_n^{(\epsilon)} \exp\{\frac{2^d}{\pi_n}\mathbb E_\xi\sum\limits_{i=1}^{N(u)}\epsilon\|L_i(u)\| \}\leq Q_n^{(\epsilon)}\frac{1}{\pi_n}\mathbb E_\xi\sum\limits_{i=1}^{N(u)}
e^{2^d\epsilon\|L_i(u)\|}\leq( Q_n^{(\delta)})^2,$$
for $\epsilon=\delta/2^d$.
By the ergodic theorem,
$\lim_{n}\frac{1}{n}\sum\limits_{i=0}^{n-1}Q_i^{(\delta)}=\mathbb E Q_0^{(\delta)}<\infty$ a.s., hence for $n$ large enough,
\begin{equation}\label{LDMDS5p1}
\sup_{0\leq i\leq n-1}\bar Q_i^{(\epsilon)}\leq\sup_{0\leq i\leq n-1}(Q_i^{(\delta)})^2\leq \left(\sum\limits_{i=0}^{n-1}Q_i^{(\delta)}\right)^2\leq Cn^2\qquad a.s..
\end{equation}
Notice that for $n$ large  enough (such that $\frac{a_n}{n}\|t\|<\epsilon$),
\begin{eqnarray*}
\overset{\infty}{\underset{k=0}{\sum}}\frac{1}{\pi_i}\mathbb{E}_\xi\int\frac{1}{k!}\left|\langle\frac{a_n}{n}t,x\rangle\right|^k \bar X_i(dx)
\leq\frac{1}{\pi_i}\mathbb{E}_\xi\int e^{\epsilon\|x\|}\bar X_i(dx)=\bar Q_i^{(\epsilon)}<\infty~~\text{a.s..}
\end{eqnarray*}
Since $\mathbb{E}_\xi\sum\limits_{j=1}^{N(u)}\bar L_j(u)=\mathbb{E}_\xi\sum\limits_{j=1}^{N(u)} L_j(u)-\ell_{|u|}\pi_{|u|} =0$ a.s.,  we can write $\Delta_{n,i}$ as
\begin{eqnarray*}
\Delta_{n,i}
=\frac{1}{\pi_i}\mathbb{E}_\xi\int\overset{\infty}{\underset{k=0}{\sum}}\frac{\langle\frac{a_n}{n}t,x\rangle^k}{k!}\bar X_i(dx)-1
=\overset{\infty}{\underset{k=0}{\sum}}\frac{1}{k!}\frac{1}{\pi_i}\mathbb{E}_\xi\int\langle\frac{a_n}{n}t,x\rangle^k\bar X_i(dx)-1
=\overset{\infty}{\underset{k=2}{\sum}}\gamma^n_{ik},
\end{eqnarray*}
with the notation
\begin{eqnarray*}
\gamma^n_{ik}
&=&\frac{1}{k!}\frac{1}{\pi_i}\mathbb{E}_\xi\int\langle\frac{a_n}{n}t,x\rangle^k\bar X_i(dx)\nonumber\\
&=&\frac{1}{k!}\frac{1}{\pi_i}\mathbb{E}_\xi\int\langle\frac{a_n}{n}t,x\rangle^k\mathbf{1}_{\{\|x\|\leq \frac{4}{\epsilon}\log n\}}\bar X_i(dx)+\frac{1}{k!}\frac{1}{\pi_i}\mathbb{E}_\xi\int\langle\frac{a_n}{n}t,x\rangle^k\mathbf{1}_{\{\parallel x\parallel>\frac{4}{\epsilon}\log n\}}\bar X_i(dx)\nonumber\\
&=&:\alpha^n_{ik}+\beta^n_{ik}.
\end{eqnarray*}
 We can calculate that
\begin{eqnarray}\label{LDMDS5p2}
|\alpha^n_{ik}|
\leq\frac{1}{k!}\left(\frac{a_n}{n}\|t\|\right)^k\frac{1}{\pi_i}\mathbb{E}_\xi\int\|x\|^k\mathbf{1}_{\{\|x\|\leq \frac{4}{\epsilon}\log n\}}\bar X_i(dx)
\leq\frac{1}{k!}\left(\frac{a_n}{n}\frac{4}{\epsilon}\log n\|t\|\right)^k
\end{eqnarray}
and
\begin{eqnarray}\label{LDMDS5p3}
|\beta^n_{ik}|
\leq\frac{1}{k!}\left(\frac{a_n}{n}\|t\|\right)^k\frac{1}{\pi_i}\mathbb{E}_\xi\int\|x\|^k n^{-2}e^{\frac{\epsilon}{2}\|x\|}\bar X_i(dx)
\leq\left(\frac{2}{\epsilon}\frac{a_n}{n}\|t\|\right)^k\frac{\bar Q_i^{(\epsilon)}}{n^2}.
\end{eqnarray}
In the last 
inequality above, we have used the fact that $\frac{1}{k!}\left(\frac{\epsilon}{2}\|x\|\right)^k\leq e^{\frac{\epsilon}{2}\|x\|}$ for all $k$. By (\ref{LDMDS5p2}), (\ref{LDMDS5p3}) and (\ref{LDMDS5p1}), we see that for $n$ large enough,

\begin{equation}\label{LDMDS5p41}
\sup_{1\leq i\leq n-1}|\Delta_{n,i}|\leq\sum_{k=2}^\infty\sup_{1\leq i\leq n-1}|\gamma^n_{ik}|\leq C\sum_{k=2}^\infty
\left(\frac{4}{\epsilon} d_n\|t\|\right)^k\leq M_1 d_n^2\qquad a.s.,
\end{equation}
where $d_n=\frac{a_n}{n}\log n$ and $M_1>0$ is a constant (depending on $t$). It is clear that $\lim_n d_n=0$, so that (\ref{18}) holds for $n$ sufficiently large.

Now we calculate (\ref{17}).   Noticing
(\ref{18}),  when $n$ is  large enough,  we can write a.s.,
\begin{eqnarray*}
\frac{n}{a^2_n}\lambda_n\left(\frac{a^2_n}{n}t\right)
&=&\frac{n}{a^2_n}\overset{n-1}{\underset{i=0}{\sum}}\Delta_{n,i}+\frac{n}{a^2_n}\overset{\infty}{\underset{j=2}{\sum}}\frac{(-1)^{j+1}}{j!}\overset{n-1}{\underset{i=0}{\sum}}(\Delta_{n,i})^j
=:A_n+B_n.
\end{eqnarray*}
For $B_n$, by (\ref{LDMDS5p41}), we get for $n$ large enough,
\begin{equation}\label{LDMDS5p5}
|B_n|\leq\frac{n}{a^2_n}\overset{\infty}{\underset{j=2}{\sum}}\frac{1}{j}\overset{n-1}{\underset{i=0}{\sum}}|\Delta_{n,i}|^j
\leq\frac{n^2}{a^2_n}\overset{\infty}{\underset{j=2}{\sum}}\frac{1}{j}M_1^j d^{2j}_n\leq\frac{n^2}{a^2_n}\overset{\infty}{\underset{j=2}{\sum}}M_1^j d^{2j}_n\leq M_2 \frac{n^2}{a^2_n}d_n^4\qquad a.s.,
\end{equation}
where $M_2>0$ is a constant.
Since $\lim_n d_n=0$ and
$
\lim_{n} \frac{n^2}{a^2_n}d_n^3=
\lim\limits_{n} \frac{a_n}{n^\beta}\frac{(\log n)^3}{n^{1-\beta}}=0
$,
we deduce $B_n\rightarrow0$ a.s. immediately  from (\ref{LDMDS5p5}). For $A_n$, 
we can decompose
\begin{eqnarray*}
A_n
=\frac{n}{a^2_n}\overset{n-1}{\underset{i=0}{\sum}}\overset{\infty}{\underset{k=2}{\sum}}\gamma^n_{ik}
=\frac{n}{a^2_n}\overset{\infty}{\underset{k=2}{\sum}}\overset{n-1}{\underset{i=0}{\sum}}\gamma^n_{ik}
=\frac{n}{a^2_n}\overset{n-1}{\underset{i=0}{\sum}}\gamma^n_{i2}+\frac{n}{a^2_n}\overset{\infty}{\underset{k=3}{\sum}}\overset{n-1}{\underset{i=0}{\sum}}\gamma^n_{ik}
=: C_n+D_n.
\end{eqnarray*}
For $D_n$, 
 for $n$ large enough,
$$|D_n|\leq\frac{n}{a^2_n}\overset{\infty}{\underset{k=3}{\sum}}\overset{n-1}{\underset{i=0}{\sum}}|\gamma^n_{ik}|\leq C\frac{n^2}{a^2_n} \overset{\infty}{\underset{k=3}{\sum}}\left(\frac{4}{\epsilon} d_n\|t\|\right)^k\leq M_3\frac{n^2}{a^2_n} d_n^3\qquad a.s.,$$
where $M_3>0$ is a constant, so that $D_n\rightarrow0$ a.s.. Finally, it remains to calculate the limit of $C_n$. By the ergodic theorem,
\begin{eqnarray*}
\lim_{n\rightarrow\infty}C_n
=\lim_{n\rightarrow\infty}\frac{n}{a^2_n}\overset{n-1}{\underset{i=0}{\sum}}\frac{1}{2}\frac{1}{\pi_i}\mathbb{E}_\xi\int\langle\frac{a_n}{n}t,x\rangle^2\bar X_i(dx)
=\Gamma(t)\;\; a.s.,
\end{eqnarray*}
which completes the proof.
\end{proof}

\medskip
Applying Lemma \ref{LDMDMDL1} and the uniform convergence of $W_n(t)$ near $0$, we carry on the proof of Theorem \ref{Mdp1.4}.
\begin{proof}[Proof of Theorem \ref{Mdp1.4}]
Let
$\Gamma_n(t)=\log\left[\frac{\int e^{\langle\frac{t}{a_n},x\rangle}Z_n(dx)}{Z_n(\mathbb{R}^d)}\right]=\log\left[\frac{\tilde Z_n(a_n^{-1}t)}{Z_n(\mathbb R^d)}\right]$.
It can be seen that
\begin{equation}\label{MEP1}
\frac{n}{a_n^2}\Gamma_n(\frac{a_n^2}{n}t)=\frac{n}{a_n^2}\log W_n(\frac{a_n}{n}t)+\frac{n}{a_n^2}\lambda_n(\frac{a_n^2}{n}t)+\frac{1}{a_n}\sum_{i=0}^{n-1}\langle t,\ell_i\rangle
-\frac{n}{a_n^2}\log W_n(0).
\end{equation}
It is evident that $0\in I$, since $-\Lambda(0)=-\mathbb E\log m_0(0)<0$. So we have $0\in\Omega$. By  Theorem \ref{Conver1.1}, $W_n(z)$ converges uniformly a.s. in a neighbourhood of $0\in\mathbb C^d$, so that the limit $W(z)$ is  continuous at $0$.
Since the environment $\xi$ is a stationary mixing sequence satisfying $\mathbb E \ell_0=0$, by (\cite{hipp79}, Theorem 2), we have $\lim_{n}\frac{1}{a_n}\sum\limits_{i=0}^{n-1}\langle t,\ell_i\rangle=0$ a.s..
Letting $n\rightarrow\infty$ in (\ref{MEP1}) and using Lemma \ref{LDMDMDL1},
we obtain for each $t\in\mathbb R^d$,
\begin{equation}\label{MEP2}
\lim_{n\rightarrow}\frac{n}{a_n^2}\Gamma_n(\frac{a_n^2}{n}t)
=\Gamma(t)\qquad a.s..
\end{equation}
So (\ref{MEP2}) a.s. holds for all rational $t$, and hence for all $t\in\mathbb R^d$ by the convexity of $\Gamma_n(t)$ and the continuity of $\Gamma(t)$.
Then apply the G\"{a}rtner-Ellis theorem.
\end{proof}



\end{document}